\DeclareMathAlphabet{\mathscrbf}{OMS}{mdugm}{b}{n}
\newtheorem{proposition}{Proposition}
\newtheorem{lemma}{Lemma}
\newtheorem{definition}{Definition}
\newtheorem{remark}{Remark}
\newtheorem*{remark*}{Remark}
\newcommand{\eps}{\varepsilon}
\newcommand{\sig}{\sigma}
\newcommand{\dd}{\,\text{d}}
\newcommand{\dc}{:}   
\newcommand{\TF}{\mathscr{F}}  
\newcommand{\tou}[1]{{\bm{#1}}}
\newcommand{\tod}[1]{{\bm{#1}}}
\newcommand{\toq}[1]{{\bm{#1}}}
\newcommand{\macr}[1]{\overline{#1}}
\newcommand{\eff}[1]{{#1}_{\operatorname{eff}}}
\newcommand{\moy}[1]{\big\langle #1 \big\rangle}
\newcommand{\psh}[2]{\big( #1,#2 \big)_{\!\mathscrbf{H}}}
\newcommand{\pse}[2]{\big( #1,#2 \big)_{\!\mathscrbf{H}_e}}
\newcommand{\pss}[2]{\big( #1,#2 \big)_{\!\mathscrbf{H}_s}}
\newcommand{\pseLz}[2]{\big( #1,#2 \big)_{\!\mathscrbf{H}_e,\toq{L}_0}}
\newcommand{\pssLz}[2]{\big( #1,#2 \big)_{\!\mathscrbf{H}_s,\toq{L}_0}}
\newcommand{\pseL}[2]{\big( #1,#2 \big)_{\!\mathscrbf{H}_e,\toq{L}}}
\newcommand{\pssL}[2]{\big( #1,#2 \big)_{\!\mathscrbf{H}_s,\toq{L}}}
\newcommand{\pdse}[2]{\big( #1,#2 \big)_{\!\mathscrbf{H}_s\times\mathscrbf{H}_e}}
\newcommand{\cS}[1]{{#1}_{\raisebox{-1pt}{\scriptsize$\mathscrbf{S}$}}}
\newcommand{\cSo}[1]{{#1}_{\mathscrbf{S}^\bot}}
\newcommand{\cEz}[1]{{#1}_{\raisebox{-1.5pt}{\scriptsize$\mathscrbf{E}_0$}}}
\newcommand{\cEzo}[1]{{#1}_{\mathscrbf{E}_0^\bot}}
\newcommand{\PEz}{\toq{P}_{\!\mathscrbf{E}_0}}
\newcommand{\PEo}{\toq{P}_{\!\mathscrbf{E}^\bot}}
\newcommand{\PEzo}{\toq{P}_{\!\mathscrbf{E}_0^\bot}}
\newcommand{\PSz}{\toq{P}_{\!\mathscrbf{S}_0}}
\newcommand{\PSo}{\toq{P}_{\!\mathscrbf{S}^\bot}}
\title{Geometric variational principles for computational homogenization}
\author{C\'edric Bellis \& Pierre Suquet \\[4mm]
\small  Aix Marseille Univ, CNRS, Centrale Marseille, LMA, Marseille, France}
\begin{document}
\date{}
\maketitle

\begin{abstract}
The homogenization of periodic elastic composites is addressed through the reformulation of the local equations of the mechanical problem in a geometric functional setting. This relies on the definition of Hilbert spaces of kinematically and statically admissible tensor fields, whose orthogonality and duality properties are recalled. These are endowed with specific energetic scalar products that make use of a reference and uniform elasticity tensor. The corresponding strain and stress Green's operators are introduced and interpreted as orthogonal projection operators in the admissibility spaces. In this context and as an alternative to classical minimum energy principles, two geometric variational principles are investigated with the introduction of functionals that aim at measuring the discrepancy of arbitrary test fields to the kinematic, static or material admissibility conditions of the problem. By relaxing the corresponding local equations, this study aims in particular at laying the groundwork for the homogenization of composites whose constitutive properties are only partially known or uncertain. The local fields in the composite and their macroscopic responses are computed through the minimization of the proposed geometric functionals. To do so, their gradients are computed using the Green's operators and gradient-based optimization schemes are discussed. A FFT-based implementation of these schemes is proposed and they are assessed numerically on a canonical example for which analytical solutions are available.
\paragraph{Keywords:}{Composite materials -- Helmholtz decomposition -- Green's operators -- Lippmann-Schwinger equation -- Gradient-based algorithms}
\end{abstract}

\section{Introduction}

\subsection{Context}

Focusing on linear composite materials, the early works \cite{Kroner} and \cite{Willis77,Willis81} have shown that the local fields satisfying the governing equations of the associated mechanical problem are also solutions of some linear integral equations, which are reminiscent of the well-known Lippmann-Schwinger equation. These formulations rely typically on the introduction of a homogeneous comparison material and of the corresponding Green's operators for the strain or the stress fields. Remarkably, the Fourier transforms of the kernels of these integral operators are known in closed forms for different types of anisotropy of the reference medium, see \cite{Khatchaturyan,Mura}. These bases have enabled the development of methods aiming at computing local fields, and their macroscopic responses as well, as the solutions to these integral equations, starting from the work \cite{Moulinec94,Moulinec} where they are solved using a fixed-point iterative scheme in a FFT-based implementation. This method has developed in the field of computational homogenization with successful applications to a wide range of configurations and a concomitant improvement of the corresponding algorithms over the years, see \cite{Moulinec18} and the references therein. Some of these algorithms were developed without reference to variational principles  \cite{Moulinec94,MUL96,Eyre,Monchiet}, while others \cite{Michel,Brisard,Kabel} made explicit use of variational properties of the local fields.

A first link between such algorithms has been investigated in \cite{Brisard}. However, it is rather recently, see \cite{Kabel}, that the link between the Lippmann-Schwinger equation and the gradient of the strain-based energy functional has been evidenced. In particular, it has been shown that the iterative scheme introduced in \cite{Moulinec} can be interpreted as a gradient descent method with fixed step for this functional. The critical observation was that the gradient of this functional can be computed using the Green's operator when the space of second-order tensor fields is endowed with an energetic scalar product defined by the reference medium. A similar change of metric was previously used in \cite{Michel}. Doing so, it was then clear that the scheme of \cite{Moulinec} and its variations can be obtained directly in the form of gradient-based algorithms according to minimum energy principles. Moreover, the avenues for improvement from the original scheme with fixed step became clear, namely by using optimal or fast gradients methods as done subsequently in \cite{Schneider}. Conjugate-gradient methods have also been investigated in a number of earlier studies, see \cite{Zeman,Brisard,Gelebart}. However, the latter do not make use of the key property that the integral operator featured in the Lippmann-Schwinger equation is the gradient of the energy functional in a well-chosen Hilbert space and as such it is a self-adjoint operator. This fact has major implications for gradient descent methods, which results in subtle but fundamental differences between these algorithms.

The present study is structured around two key points: 
\begin{enumerate}
\item Within the framework of classical energetic variational principles the gradients of given functionals can be computed using the available Green's operators and efficient gradient-based minimization algorithms can be employed for fast and accurate computations of composites responses.
\item The computation of these gradients relies on endowing  the space of second-order tensor fields with a geometric, i.e., Hilbertian, structure. Such a functional framework is relatively well-known, see \cite{Milton}, and traces back to \cite{Moreau,SuquetTE,Suquet87}. What is crucial in the present study is the definition of well-chosen energetic scalar products so that the spaces of kinematically and statically tensors fields, together with their complementary subspaces, are linked by a number of orthogonality or duality properties, with the associated orthogonal projection operators being generated by the Green's operators.
\end{enumerate}
In this context, this study aims at blending these ideas together by formulating some geometrical variational principles that allow us to address the computational homogenization of composites from a new angle. Our objectives are detailed below, following a preliminary subsection to present the key elements of the problem.

\subsection{Preliminaries}

Consider a periodic composite material characterized by the unit-cell $\mathcal{V}\subset\mathbb{R}^d$ and the fourth-order elasticity tensor $\toq{L}(\tou{x})$ with major and minor symmetries. The local strain and stress fields $\tod{\eps}$ and $\tod{\sig}$ solve the so-called \emph{local problem} in $\mathcal{V}$ consisting of the compatibility equations, constitutive relations and equilibrium equations with periodic boundary conditions:
\begin{equation}
        \left\{\begin{aligned}
& \text{(i)} && \tod{\eps}(\tou{x}) = \macr{\tod{\eps}} + \tod{\eps}^*(\tou{x}), \quad  \tod{\eps}^*(\tou{x}) = \frac{1}{2} \big( \tod{\nabla} \tou{u}^*(\tou{x}) + \tod{\nabla} {\tou{u}^*}(\tou{x})^{\!\top}\big),\quad \tou{u}^* \ \text{periodic on}\ \partial \mathcal{V}, \\[1mm]
& \text{(ii)} &&  \tod{\sigma}(\tou{x})=\toq{L}(\tou{x})\tod{\eps}(\tou{x}), \\[2mm]
& \text{(iii)} && \operatorname{div}\tod{\sigma}(\tou{x})=\tou{0}, \quad \tod{\sig}\cdot\tou{n} \ \text{anti-periodic on}\ \partial \mathcal{V},
	\end{aligned}\right.
 \label{local1}
 \end{equation}
with $\tou{u}^*$ being the fluctuation of the displacement field in $\mathcal{V}$, $\tou{n}$ the unit outward normal on $\partial\mathcal{V}$ and $\macr{\tod{\eps}}$ an applied macroscopic strain. A mathematical definition of periodicity conditions is given in Appendix~\ref{math1}.

The average strain $\macr{\tod{\eps}}$ being prescribed then the \emph{effective} elasticity tensor $\eff{\toq{L}}$ is characterized by the energetic variational principle:
\begin{equation}
\frac{1}{2}\eff{\toq{L}}\,\macr{\tod{\eps}}:\macr{\tod{\eps}}=\min_{\substack{ \tod{e} \in \mathscrbf{E} \\ \langle\tod{e}\rangle=\macr{\tod{\eps}} }}\mathcal{J}(\tod{e}) \quad \text{with} \quad \mathcal{J}(\tod{e})=\frac{1}{2} \moy{\toq{L}(\tou{x})\tod{e}(\tou{x}):\tod{e}(\tou{x})},
\label{princip0}
\end{equation}
where $\mathscrbf{E}$ denotes the space of second-order tensor fields that are admissible strains, which will be properly defined in Section~\ref{sec:geo:set}, and with the averaging operator $\moy{\!\cdot\!}$ over $\mathcal{V}$ and the doubly contracted product given by
\[
 \moy{f}  = \frac{1}{|\mathcal{V}|} \int_\mathcal{V} f(\tou{x}) \dd\tou{x} \quad \text{and} \quad   \tod{s}\dc\tod{e}= \sum_{i,j=1}^d s_{ij}e_{ij} .
 \]
The actual strain field $\tod{\eps}$ solution of the local problem \eqref{local1} is the unique minimizer in \eqref{princip0}. Note that the tensor $\toq{L}(\tou{x})$ in (\ref{local1}.ii) is interpreted as a local operator hence the omission in such a relation of the doubly contracted product. We reserve the latter for products between second-order tensors.

\subsection{Objective}\label{sec:obj}

In this article, our objective is three-fold:
\begin{enumerate}
\item First, we aim at revisiting from a geometric standpoint the classical energetic variational approaches such as \eqref{princip0}, by relying on the geometric properties of the spaces of compatible strains and equilibrated stresses. In the minimum energy principle \eqref{princip0} the strain compatibility equation (\ref{local1}.i) is satisfied through the choice of the minimization space $\mathscrbf{E}$ and the constitutive relations (\ref{local1}.ii) are directly imposed in the definition of the cost functional $\mathcal{J}$. The equilibrium equation (\ref{local1}.iii) for the stress field, formally rewritten as the condition $\tod{\sigma}\in \mathscrbf{S}$ for the time being, with a proper definition of the space $\mathscrbf{S}$ given in the next section, is achieved through the minimization of $\mathcal{J}$ over $\mathscrbf{E}$. In this context, we show in Section \ref{sec:min:proj} that we can adopt an alternative approach using a variational principle of the form:
\begin{equation}\label{idea:var:1}
\tod{\eps} = \operatorname{arg}\underset{ \substack{ \tod{e} \in \mathscrbf{E} \\ \langle\tod{e}\rangle=\macr{\tod{\eps}} } }{\operatorname{min}}   \   \mathcal{N}(\tod{e}) \quad \text{with} \quad  \mathcal{N}(\tod{e})=\Updelta{\textrm{Equil}}(\toq{L} \tod{e} ) 
\end{equation}
where ``$\Updelta{\textrm{Equil}}$'' stands for a measure of the static admissibility of the test field defined as $\tod{s}=\toq{L} \tod{e}$. With $\mathscrbf{S}$ being the space of admissible stresses, achieving the condition $\tod{s}\in \mathscrbf{S}$ is equivalent to minimizing the norm of the projection of $\tod{s}$ onto the subspace orthogonal to $\mathscrbf{S}$. Upon introducing the corresponding orthogonal projection operator $\PSo$ and a suitable norm, then one defines $\mathcal{N}$ with 
\[
\Updelta{\textrm{Equil}}(\tod{s})=\frac{1}{2} \| \PSo \tod{s} \|^2,
\]
which justifies referring to \eqref{idea:var:1} as a geometric variational principle.\\

\item Building on this idea, we introduce next a two-field geometric variational principle that allows to treat the constitutive relations (\ref{local1}.ii) on the same level as the strain and the stress admissibility equations. This is of particular interest in the situations where the constitutive model is partially or fully unknown as when dealing with inverse problems of material identification, see \cite{Bonnet}, in data-driven computational approaches \cite{Kirchdoerfer}, or when material uncertainties must be accounted for, see \cite{Nouy,Staber} and the references therein. As shown in Section \ref{two:field:princip}, a two-field variational approach can be adopted through an \emph{unconstrained} minimization problem of the form
\begin{equation}\label{idea:var:2}
(\tod{\sig},\tod{\eps}) =  \operatorname{arg}\underset{\tod{s},\tod{e}}{\operatorname{min}}  \  \mathcal{P}(\tod{s},\tod{e})  \quad \text{with} \quad  \mathcal{P}(\tod{s},\tod{e})  = \Updelta{\textrm{Compat}}(\tod{e}) + \Updelta{\textrm{Const}}(\tod{s},\tod{e})  + \Updelta{\textrm{Equil}}(\tod{s}). 
\end{equation}
In \eqref{idea:var:2}, the functional $\mathcal{P}$ features the term $\Updelta{\textrm{Const}}(\tod{s},\tod{e})$ that is to be defined as a positive measure of the local error in constitutive relations between the test fields $\tod{s}$ and $\tod{e}$. The other two terms are geometric measures of the kinematic and static admissibilities of these fields: the third term is defined as in \eqref{idea:var:1} while the first one is a measure of the strain compatibility condition, i.e., $\tod{e}\in\mathscrbf{E}$ with $\moy{\tod{e}} = \macr{\tod{\eps}}$. By resorting to the projector $\PEo$ onto the subspace orthogonal to the space $\mathscrbf{E}$ of admissible strains and using a appropriate norm, then $\mathcal{P}$ is defined with
\[
\Updelta{\textrm{Compat}}(\tod{e})=\frac{1}{2} \| \moy{\tod{e}} - \macr{\tod{\eps}} + \PEo \tod{e} \|^2.
\]
\begin{figure}[htb]	
\centering
\includegraphics[width=0.65\textwidth]{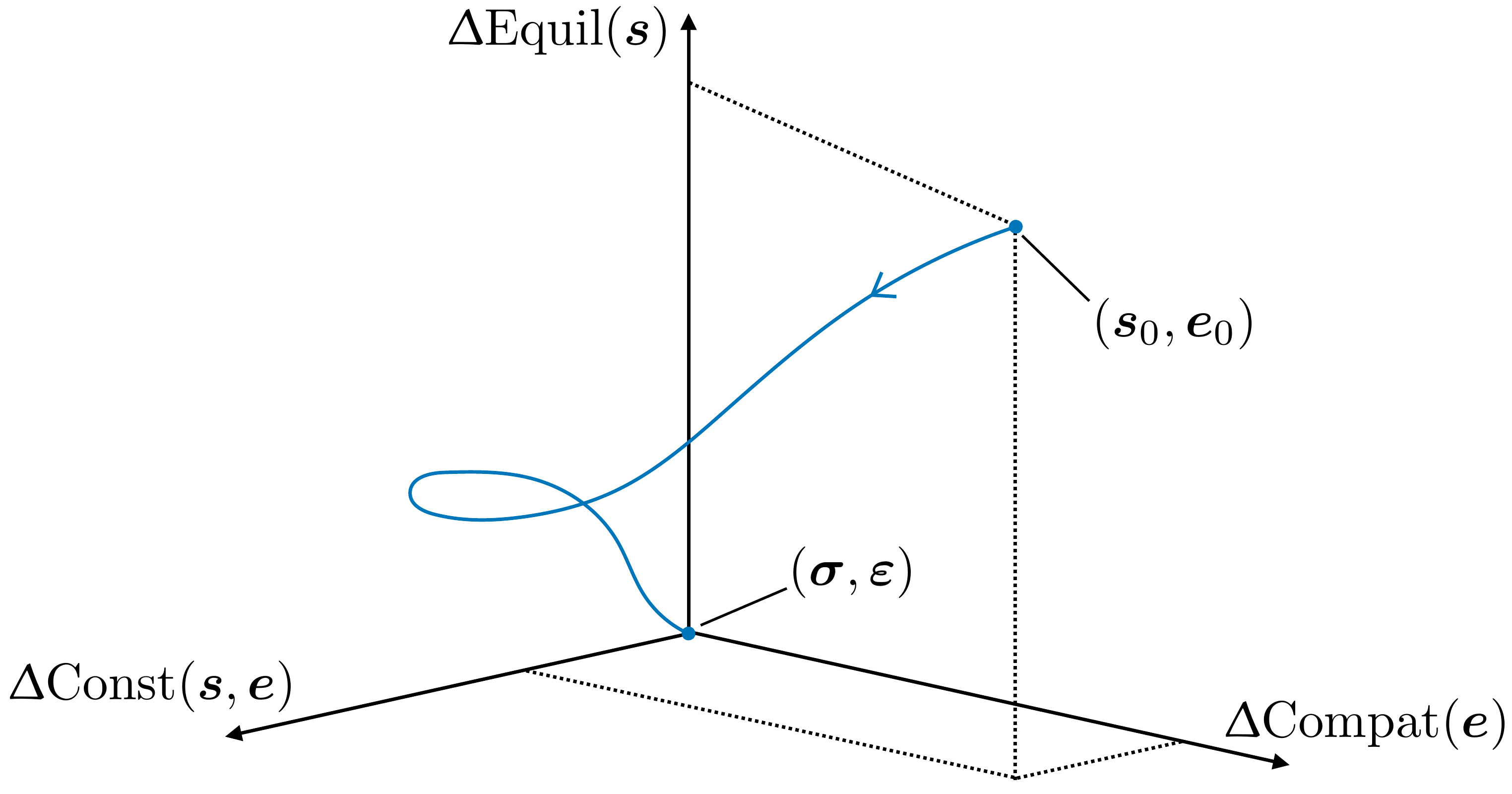}
\caption{Schematics of the evolution of computed fields $(\tod{s},\tod{e})$, from the initial point $(\tod{s}_0,\tod{e}_0)$ to the actual solution $(\tod{\sig},\tod{\eps})$, in a projection space indicating the discrepancy to the kinematic, static and material admissibility conditions.} \label{Fig:Schematics}
\end{figure}
According to the definition of the functional $\mathcal{P}$, any \emph{pair} of fields $(\tod{s},\tod{e})$ can be represented as a \emph{point} in a projection space where its coordinates correspond to the values of $\Updelta{\textrm{Compat}}(\tod{e})$, $\Updelta{\textrm{Const}}(\tod{s},\tod{e})$ and $\Updelta{\textrm{Equil}}(\tod{s})$, see the schematics of Fig. \ref{Fig:Schematics}. This allows in particular to visualize the evolution of the output of an iterative algorithm aiming at computing the solution to the minimization problem \eqref{idea:var:2} and to compare different computation strategies.

\item The solutions to the geometric variational principles \eqref{idea:var:1} and \eqref{idea:var:2} can be computed using gradient-based minimization algorithms. In this perspective, we show that the gradients of the functionals $\mathcal{N}$ and $\mathcal{P}$ can be expressed by means of the periodic strain and stress Green's operators, which will be defined in Section \ref{sec:Green:op}. In fact, those operators are shown to generate orthogonal projectors on the spaces of strain and stress tensor fields when the latter are endowed with well-chosen energetic scalar products. The obtained explicit forms of the gradients of the functionals $\mathcal{N}$ and $\mathcal{P}$ will also shed light on the proposed geometric variational principles by establishing some relationships with the classical minimum energy principles.
\end{enumerate}

The article is organized as follows. In Section \ref{sec:geo:Green} the geometric functional framework is set, the Green's operators are introduced and their properties as projection operators are investigated. Section \ref{sec:geo:var:princip} focuses on the introduction and the study of the geometric variational principles. Numerical implementation and examples are discussed in Section \ref{sec:num}. Mathematical definitions and classical properties of the Green's operators are deferred to the appendices.

\section{Strain and stress orthogonal decompositions}\label{sec:geo:Green}

\subsection{Geometric setting}\label{sec:geo:set}

Let $\toq{L}_0$ denote a uniform (no spatial dependence) elastic stiffness tensor, i.e., a positive definite fourth-order tensor with major and minor symmetries. One defines as $\mathscrbf{H}_e$ the space of symmetric $\mathscrbf{L}^2_\text{per}\!\left(\mathcal{V}\right)$-tensor fields, with the notation of Appendix \ref{math1}, when equipped with the following energetic scalar product
\begin{equation}
\pse{\tod{\eta}}{\tilde{\tod{\eta}}}= \moy{\toq{L}_0\tod{\eta}(\tou{x})\dc\tilde{\tod{\eta}}(\tou{x})}. \label{scalare}
\end{equation}
Doing so, $\mathscrbf{H}_e$ is a Hilbert space of \emph{strain} tensor fields. Let $\mathscrbf{U}_e\subset \mathscrbf{H}_e$ be the linear subspace of uniform strain fields (typically averages of local fields) and $\mathscrbf{E}_0\subset \mathscrbf{H}_e$ the linear subspace of kinematically compatible strain fields, which derive from a periodic displacement field:
\[
\mathscrbf{E}_0 = \left\{ \tod{e}^*\in \mathscrbf{H}_e  \text{ such that:}\  \exists\, \tou{w}^* \in \tou{H}^1_\text{per}\!\left(\mathcal{V}\right) ,\; \tod{e}^*(\tou{x}) = \frac{1}{2} \big(\tod{\nabla w}^*(\tou{x}) + \tod{\nabla} {\tou{w}^*}(\tou{x})^{\!\top}\big)  \text{ in}\ \mathcal{V} \right\}.
\]
By definition, every field $\tod{e}^*$ in $\mathscrbf{E}_0$ satisfies $\moy{\tod{e}^*(\tou{x})}=\tod{0}$. As $\mathscrbf{U}_e$ and $\mathscrbf{E}_0$ are closed subspaces of $\mathscrbf{H}_e$, they are Hilbert spaces for the scalar product \eqref{scalare}. Moreover, defining the subspace $ \mathscrbf{E}$ as 
 \begin{equation}
\mathscrbf{E} =   \mathscrbf{U}_e \oplus \mathscrbf{E}_0,
\end{equation}
then $\mathscrbf{H}_e$ admits the orthogonal decomposition
 \begin{equation}
\mathscrbf{H}_e = \mathscrbf{U}_e\oplus\mathscrbf{E}_0\oplus\mathscrbf{E}^\bot.
\label{decompHe}
\end{equation}
Therefore, any field $\tod{\eta} \in \mathscrbf{H}_e$ can be decomposed as
\begin{equation}
\tod{\eta} = \moy{\tod{\eta}} + \PEz\tod{\eta} + \PEo \tod{\eta}.
\label{prop6}
\end{equation}
with $\moy{\!\cdot\!}$, $\PEz$ and $\PEo$ being the orthogonal projection operators onto $\mathscrbf{U}_e$, $\mathscrbf{E}_0$ and $\mathscrbf{E}^\bot$ respectively, which are mutually orthogonal for the energetic scalar product \eqref{scalare}.

Its topological dual is denoted as $\mathscrbf{H}_s=\mathscrbf{H}'_e$. According to the Riesz representation theorem, there exists an isomorphic mapping $\toq{R}_s:\mathscrbf{H}_e\to\mathscrbf{H}_s$ such that for every $\tod{\eta}\in\mathscrbf{H}_e$:
\begin{equation}\label{Riesz}
\pse{\tod{\eta}}{\tilde{\tod{\eta}}}=\pdse{\toq{R}_s\tod{\eta}}{\tilde{\tod{\eta}}} \qquad \forall \tilde{\tod{\eta}}\in\mathscrbf{H}_e,
\end{equation}
where $\pdse{\cdot}{\cdot}$ denotes the duality product between $\mathscrbf{H}_s$ and $\mathscrbf{H}_e$. The definition of the energetic scalar product \eqref{scalare} allows to identify any element in $\mathscrbf{H}_s$ to a $\mathscrbf{L}^2_\text{per}\!\left(\mathcal{V}\right)$-tensor field that is dimensionally consistent with a \emph{stress} field. Moreover, the Riesz mapping reads $\toq{R}_s:\tod{\eta}\mapsto \toq{L}_0\tod{\eta}$ when the duality product is defined according to the principle of virtual work as
\[
\pdse{\tod{\tau}}{\tod{\eta}} = \moy{\tod{\tau}(\tou{x})\dc\tod{\eta}(\tou{x})}.
\]
Therefore, $\mathscrbf{H}_s$ is itself a Hilbert space with the induced energetic scalar product
\begin{equation}
\pss{\tod{\tau}}{\tilde{\tod{\tau}}}= \moy{\tod{\tau}(\tou{x})\dc\toq{L}_0^{-1}\tilde{\tod{\tau}}(\tou{x})}, \label{scalars}
\end{equation}
and, identifying $\mathscrbf{H}_e$ with its bidual, the inverse operator $\toq{R}_e=\toq{R}_s^{-1}:\mathscrbf{H}_s\to\mathscrbf{H}_e $ with $\toq{R}_e\tod{\tau}=\toq{L}_0^{-1}\tod{\tau}$ corresponds to the Riesz operator on $\mathscrbf{H}_s$.\\

As in \eqref{decompHe}, let $\mathscrbf{U}_s\subset \mathscrbf{H}_s$ be the linear subspace of uniform stress fields and $\mathscrbf{S}_0 $ the linear subspace of self-equilibrated fields, i.e., divergence-free and mean-free, so that
 \[
\mathscrbf{S}_0 = \Big\{ \tod{s} \in \mathscrbf{H}_s \text{ such that:}\  \operatorname{div}\tod{s}(\tou{x})=\tou{0} \text{ in}\ \mathcal{V}, \  \tod{s}\cdot\tou{n} \text{ anti-periodic on}\ \partial V, \  \moy{\tod{s}(\tou{x})}=\tod{0}\Big\}.
\]
The subspaces $\mathscrbf{U}_s$ and $\mathscrbf{S}_0$ are closed in $\mathscrbf{H}_s$ so that, defining the subspace $\mathscrbf{S}$ as
 \begin{equation}
\mathscrbf{S}=\mathscrbf{U}_s\oplus\mathscrbf{S}_0,
\end{equation}
one has the following orthogonal decomposition
 \begin{equation}
\mathscrbf{H}_s = \mathscrbf{U}_s\oplus\mathscrbf{S}_0\oplus\mathscrbf{S}^\bot
\label{decompHs}
\end{equation}
relatively to the energetic scalar product \eqref{scalars}. Thus, any field $\tod{\tau} \in \mathscrbf{H}_s$ can be decomposed as
\begin{equation}
\tod{\tau} = \moy{\tod{\tau}}  + \PSz \tod{\tau} +  \PSo \tod{\tau} ,
\label{prop7}
\end{equation}
with $\moy{\!\cdot\!}$, $\PSz$ and $\PSo$ being three mutually orthogonal projectors onto $\mathscrbf{U}_s$, $\mathscrbf{S}_0$ and $\mathscrbf{S}^\bot$. In this setting, the following lemma holds:
\begin{lemma}[Hill-type orthogonality properties] \label{Hill:orth}
\[\begin{aligned}
& \tod{e}\in \mathscrbf{E} && \Leftrightarrow && \pdse{\tod{s}}{\tod{e}}=0 \qquad \forall\tod{s}\in \mathscrbf{S}_0, \\
& \tod{s}\in \mathscrbf{S} && \Leftrightarrow && \pdse{\tod{s}}{\tod{e}}=0 \qquad \forall\tod{e}\in \mathscrbf{E}_0.
\end{aligned}\]
\end{lemma}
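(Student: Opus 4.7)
The plan is to treat the two equivalences together by first establishing the forward implications through a direct integration-by-parts calculation, and then deducing the reverse implications from an identification of the energetic orthogonal complements $\mathscrbf{E}^\bot$ and $\mathscrbf{S}^\bot$ in terms of $\mathscrbf{S}_0$ and $\mathscrbf{E}_0$ via the Riesz isomorphism $\toq{R}_s$ of \eqref{Riesz}.

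For the forward direction of the first equivalence, I would fix $\tod{e}\in\mathscrbf{E}$ and $\tod{s}\in\mathscrbf{S}_0$ and decompose $\tod{e}=\macr{\tod{e}}+\tod{e}^*$ with $\tod{e}^*=\tfrac12\bigl(\tod{\nabla}\tou{w}^*+(\tod{\nabla}\tou{w}^*)^{\!\top}\bigr)$ and $\tou{w}^*$ periodic. Then
\[
\pdse{\tod{s}}{\tod{e}} = \macr{\tod{e}}\dc\moy{\tod{s}} + \moy{\tod{s}\dc\tod{e}^*},
\]
where the first term vanishes by mean-freeness of $\tod{s}$ and the second reduces, by symmetry of $\tod{s}$, to $\moy{s_{ij}\,\partial_j w^*_i}$; the divergence theorem on $\mathcal{V}$ then produces a boundary contribution that vanishes by combining the periodicity of $\tou{w}^*$ with the anti-periodicity of $\tod{s}\cdot\tou{n}$, and a bulk contribution that vanishes by $\operatorname{div}\tod{s}=\tou{0}$. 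The exact same calculation with the roles of $\tod{s}$ and $\tod{e}$ exchanged, now using $\moy{\tod{e}}=\tod{0}$ for $\tod{e}\in\mathscrbf{E}_0$, yields the forward direction of the second equivalence.

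For the reverse direction, the central step is the identification $\mathscrbf{E}^\bot=\toq{R}_e(\mathscrbf{S}_0)$. The inclusion $\toq{R}_e(\mathscrbf{S}_0)\subset\mathscrbf{E}^\bot$ follows from the forward direction and \eqref{Riesz}, since $\pse{\toq{R}_e\tod{s}}{\tilde{\tod{e}}}=\pdse{\tod{s}}{\tilde{\tod{e}}}=0$ for every $\tod{s}\in\mathscrbf{S}_0$ and $\tilde{\tod{e}}\in\mathscrbf{E}$. For the converse inclusion, starting from $\tod{\eta}\in\mathscrbf{E}^\bot$, I would apply $\moy{\toq{L}_0\tod{\eta}\dc\tilde{\tod{e}}}=0$ to test fields $\tilde{\tod{e}}\in\mathscrbf{U}_e$, which forces $\moy{\toq{L}_0\tod{\eta}}=\tod{0}$, and to $\tilde{\tod{e}}\in\mathscrbf{E}_0$, which, by the integration-by-parts above read backwards, forces $\operatorname{div}(\toq{L}_0\tod{\eta})=\tou{0}$ together with weak anti-periodicity of $(\toq{L}_0\tod{\eta})\cdot\tou{n}$. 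The companion identification $\mathscrbf{S}^\bot=\toq{R}_s(\mathscrbf{E}_0)$ is then free: $\toq{R}_s$ is an isometric isomorphism from $\mathscrbf{H}_e$ onto $\mathscrbf{H}_s$ mapping $\mathscrbf{U}_e$ onto $\mathscrbf{U}_s$ and $\mathscrbf{E}^\bot$ onto $\mathscrbf{S}_0$, so transporting the decomposition \eqref{decompHe} and comparing with \eqref{decompHs} forces $\toq{R}_s(\mathscrbf{E}_0)=\mathscrbf{S}^\bot$.

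With these identifications at hand, the reverse direction of the first equivalence reduces to the test choice $\tod{s}=\toq{L}_0\PEo\tod{e}\in\mathscrbf{S}_0$: the hypothesis combined with \eqref{Riesz} and the mutual orthogonality of the projectors in \eqref{prop6} yields
\[
0=\pdse{\tod{s}}{\tod{e}}=\pse{\PEo\tod{e}}{\tod{e}}=\pse{\PEo\tod{e}}{\PEo\tod{e}},
\]
whence $\PEo\tod{e}=\tod{0}$ and $\tod{e}\in\mathscrbf{E}$. The symmetric test choice $\tod{e}=\toq{L}_0^{-1}\PSo\tod{s}\in\mathscrbf{E}_0$ settles the second equivalence. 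The only delicate step is the characterization of $\mathscrbf{E}^\bot$, which amounts to reading the vanishing of the boundary integral against arbitrary periodic test displacements as the weak anti-periodicity of the traction; the rest reduces to bookkeeping with Riesz duality and orthogonal projections.
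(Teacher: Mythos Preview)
Your argument is correct, but it takes a different route from the one the paper points to. The paper does not prove the lemma in-line; it defers to \cite{SuquetTE} and invokes the characterization of compatible strain fields as distributions due to \cite{Paris,Moreau}. That result directly settles the hard direction of the first equivalence: a symmetric $L^2$ tensor field that is orthogonal (in duality) to all divergence-free mean-free fields must be the sum of a constant and a symmetrized gradient.

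You sidestep this distribution-theoretic ingredient by exploiting the Hilbert structure the paper has already put in place. Since $\mathscrbf{E}$ is asserted to be closed, the abstract complement $\mathscrbf{E}^\bot$ exists and $(\mathscrbf{E}^\bot)^\bot=\mathscrbf{E}$ is automatic; your identification $\mathscrbf{E}^\bot=\toq{L}_0^{-1}\mathscrbf{S}_0$ requires only the \emph{easy} direction (orthogonality to all gradients implies divergence-free plus anti-periodic traction), and the hard direction of the first equivalence then collapses to the Hilbert-space tautology $\tod{e}\perp\mathscrbf{E}^\bot\Rightarrow\tod{e}\in\mathscrbf{E}$. The Moreau--Paris approach is more general, working at the level of distributions without presupposing closedness of $\mathscrbf{E}_0$; your approach is more self-contained within the paper's own framework and makes transparent that the lemma is equivalent to the two identifications $\mathscrbf{E}^\bot=\toq{R}_e(\mathscrbf{S}_0)$ and $\mathscrbf{S}^\bot=\toq{R}_s(\mathscrbf{E}_0)$, which the paper subsequently records in the dual form \eqref{dual:charac}. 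One minor redundancy: your final test $\tod{e}=\toq{L}_0^{-1}\PSo\tod{s}$ for the reverse direction of the second equivalence is already contained in your characterization of $\mathscrbf{E}^\bot$, modulo splitting off the mean of $\tod{s}$.
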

The proof of this lemma can be found in \cite{SuquetTE} and makes use of the characterization of distributions which are compatible strain fields in the sense of distributions \cite{Paris,Moreau}. It establishes that the space $\mathscrbf{E}$ (resp. $\mathscrbf{S}$) is the polar space of $\mathscrbf{S}_0$ (resp. $\mathscrbf{E}_0$). This implies, see \cite{Boffi}, the following duality characterizations of these closed spaces:
\begin{equation}
\mathscrbf{E}_0=(\mathscrbf{S}^\bot)' \quad \text{and} \quad \mathscrbf{S}_0=(\mathscrbf{E}^\bot)'.
 \label{dual:charac}
\end{equation}
Similar orthogonality properties that make use of the standard $L^2$-scalar product on the tensor space $\mathscrbf{L}^2_\text{per}\!\left(\mathcal{V}\right)$ can be found in particular in \cite{Suquet87} and \cite{Milton}. Lastly, Lemma \ref{Hill:orth} yields the original result of Hill transposed to periodic boundary conditions, i.e.,
\[
\forall \tod{s}\in \mathscrbf{S}, \;   \forall \tod{e} \in \mathscrbf{E}: \qquad \pdse{\tod{s}}{\tod{e}} = \moy{\tod{s}(\tou{x})\dc\tod{e}(\tou{x})} = \moy{\tod{s}(\tou{x})}\dc\moy{\tod{e}(\tou{x})}.
\]
With these definitions at hand, the local problem \eqref{local1} can be rewritten in the condensed form:
\begin{equation}
        \left\{\begin{aligned}
& \text{(i)} && \tod{\eps}\in \mathscrbf{E}, \quad \moy{\tod{\eps}}=\macr{\tod{\eps}}, \\[1mm]
& \text{(ii)} &&  \tod{\sigma}(\tou{x})=\toq{L}(\tou{x})\tod{\eps}(\tou{x}) \text{ in }\mathcal{V}, \\[2mm]
& \text{(iii)} && \tod{\sigma}\in \mathscrbf{S}.
	\end{aligned}\right.
\label{local:fct:space}
 \end{equation}
 \begin{remark}
From now on, we reserve the notation $\tod{\eps},\, \tod{\sig}$ for the actual solution to \eqref{local:fct:space}. Tensor fields that belong to the subspaces $\mathscrbf{E}$ and $\mathscrbf{S}$ are referred to as \emph{admissible} strain and stress, and they are denoted as $\tod{e},\, \tod{s}$ respectively. The notations $\tod{\eta},\,\tod{\tau}$ are used for arbitrary fields in $\mathscrbf{H}_e$ and $\mathscrbf{H}_s$.
\end{remark}
 
\subsection{Green's operators and orthogonal projectors}\label{sec:Green:op}

Two (periodic) Green's operators, $\toq{\Gamma}_0$ for the strain field and $\toq{\Delta}_0$ for the stress field, can be associated with $\toq{L}_0$ and $\toq{L}_0^{-1}$. More specifically, for a given field $\tod{\tau}$ in $\mathscrbf{H}_s$, consider the following Eshelby problem:
\begin{equation}
\text{Find }  \tod{e}^*\in  \mathscrbf{E}_0 \text{ such that }  \tod{s}\overset{\operatorname{def}}{=}(\toq{L}_0 \tod{e}^*-\tod{\tau})   \in \mathscrbf{S}.
 \label{thermoelas}
\end{equation}
The problem (\ref{thermoelas}) has a unique solution $\tod{e}^* \in \mathscrbf{E}_0$. This allows to define the Green's operators as follows:
\begin{definition}\label{def:Green}
The periodic strain Green's operator $\toq{\Gamma}_0:\mathscrbf{H}_s\to\mathscrbf{H}_e$ of the reference medium with stiffness $\toq{L}_0$ is defined as 
\begin{equation}
\toq{\Gamma}_0:  \tod{\tau} \mapsto\toq{\Gamma}_0\tod{\tau} = \tod{e}^* \text{such that }\tod{e}^* \text{ is the solution of (\ref{thermoelas})}.
 \label{Gamma0}
\end{equation}
The periodic stress Green's operator $\toq{\Delta}_0:\mathscrbf{H}_e\to\mathscrbf{H}_s$ is defined in a similar way as 
\begin{equation}
\toq{\Delta}_0:  \tod{\eta}  \mapsto \toq{\Delta}_0 \tod{\eta}=\tod{s}^* \text{ such that } \tod{s}^* \in \mathscrbf{S}_0  \text{ and } \tod{e}\overset{\operatorname{def}}{=}(\toq{L}_0^{-1}\tod{s}^*-\tod{\eta}) \in \mathscrbf{E}. 
\label{Delta0}
\end{equation}
\end{definition}

Classical properties of $\toq{\Gamma}_0$ (and similarly of $\toq{\Delta}_0$) are summarized in Appendix \ref{app:Green}. As discussed in the previous section, the strain and the stress have different dimensions and live in the dual Hilbert spaces $\mathscrbf{H}_e$ and $\mathscrbf{H}_s$ respectively, each being endowed with its own energetic scalar product \eqref{scalare} or \eqref{scalars}. It will now be seen that these energetic scalar products generate two Helmholtz decompositions that are associated with \eqref{decompHe} and \eqref{decompHs}. Such decompositions will be expressed in terms of the operators $\toq{\Gamma}_0 \toq{L}_0$ and $\toq{\Delta}_0 \toq{L}_0^{-1}$ so that our aim is now to study some useful properties of the latter.

\begin{lemma}\label{lemma:G0L0}
Considering the Hilbert space $\mathscrbf{H}_e$ endowed with the energetic scalar product \eqref{scalare}, then the operator $\toq{\Gamma}_0 \toq{L}_0 : \mathscrbf{H}_e\to\mathscrbf{H}_e$ is the orthogonal \emph{strain} projector onto $\mathscrbf{E}_0$ as
\begin{enumerate}
\item $\toq{\Gamma}_0 \toq{L}_0$ is idempotent, i.e., $\toq{\Gamma}_0 \toq{L}_0\toq{\Gamma}_0 \toq{L}_0=\toq{\Gamma}_0 \toq{L}_0$.
\item $\toq{\Gamma}_0 \toq{L}_0$ is self-adjoint, i.e.,
\begin{equation}
\pse{\tod{\eta}}{\toq{\Gamma}_0 \toq{L}_0\tilde{\tod{\eta}}} = \pse{ \toq{\Gamma}_0 \toq{L}_0\tod{\eta}}{\tilde{\tod{\eta}}}, \qquad \forall \tod{\eta}, \tilde{\tod{\eta}} \in \mathscrbf{H}_e.
\label{self20}
\end{equation}
\item For all $\tod{\eta}  \in \mathscrbf{H}_e$ then $\toq{\Gamma}_0 \toq{L}_0\tod{\eta}\in \mathscrbf{E}_0$ and for all $\tod{e}^* \in \mathscrbf{E}_0$ it holds $\toq{\Gamma}_0 \toq{L}_0 \tod{e}^* = \tod{e}^*$.
\end{enumerate}
\end{lemma}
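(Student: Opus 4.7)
The plan is to establish items (3), (1), (2) in that order, since each leverages the previous one. Throughout, for $\tod{\eta}\in\mathscrbf{H}_e$ I apply Definition~\ref{def:Green} to $\tod{\tau}=\toq{L}_0\tod{\eta}\in\mathscrbf{H}_s$: this yields $\toq{\Gamma}_0\toq{L}_0\tod{\eta}\in\mathscrbf{E}_0$ together with the admissibility $\toq{L}_0\,\toq{\Gamma}_0\toq{L}_0\tod{\eta}-\toq{L}_0\tod{\eta}\in\mathscrbf{S}$, which is the single building block used throughout.

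For item (3), the inclusion $\toq{\Gamma}_0\toq{L}_0\tod{\eta}\in\mathscrbf{E}_0$ is immediate from the definition. The identity relation on $\mathscrbf{E}_0$ follows from the uniqueness asserted in \eqref{thermoelas}: given any $\tod{e}^*\in\mathscrbf{E}_0$, the field $\tod{e}^*$ itself solves the Eshelby problem with data $\tod{\tau}=\toq{L}_0\tod{e}^*$, because the residual $\toq{L}_0\tod{e}^*-\toq{L}_0\tod{e}^*=\tod{0}$ trivially belongs to $\mathscrbf{S}$; hence $\toq{\Gamma}_0\toq{L}_0\tod{e}^*=\tod{e}^*$. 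Item (1) then follows for free: by the range part of (3), $\toq{\Gamma}_0\toq{L}_0\tod{\eta}\in\mathscrbf{E}_0$, and applying the identity part of (3) to it gives $(\toq{\Gamma}_0\toq{L}_0)^2\tod{\eta}=\toq{\Gamma}_0\toq{L}_0\tod{\eta}$.

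Item (2) is the substantive step. For $\tod{\eta},\tilde{\tod{\eta}}\in\mathscrbf{H}_e$ I set $\tod{e}^*:=\toq{\Gamma}_0\toq{L}_0\tod{\eta}\in\mathscrbf{E}_0$ and $\tilde{\tod{e}}^*:=\toq{\Gamma}_0\toq{L}_0\tilde{\tod{\eta}}\in\mathscrbf{E}_0$, together with the residuals $\tod{s}:=\toq{L}_0\tod{e}^*-\toq{L}_0\tod{\eta}\in\mathscrbf{S}$ and $\tilde{\tod{s}}:=\toq{L}_0\tilde{\tod{e}}^*-\toq{L}_0\tilde{\tod{\eta}}\in\mathscrbf{S}$ furnished by the defining property of $\toq{\Gamma}_0$. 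I will expand $\pse{\tod{\eta}}{\toq{\Gamma}_0\toq{L}_0\tilde{\tod{\eta}}}=\moy{\toq{L}_0\tod{\eta}\dc\tilde{\tod{e}}^*}$ by substituting $\toq{L}_0\tod{\eta}=\toq{L}_0\tod{e}^*-\tod{s}$, and invoke Lemma~\ref{Hill:orth} to cancel the cross term $\moy{\tod{s}\dc\tilde{\tod{e}}^*}=\pdse{\tod{s}}{\tilde{\tod{e}}^*}=0$, which vanishes because $\tod{s}\in\mathscrbf{S}$ and $\tilde{\tod{e}}^*\in\mathscrbf{E}_0$. This reduces the scalar product to $\moy{\toq{L}_0\tod{e}^*\dc\tilde{\tod{e}}^*}$, which is manifestly symmetric in $(\tod{\eta},\tilde{\tod{\eta}})$ by the major symmetry of $\toq{L}_0$; performing the analogous manipulation on $\pse{\toq{\Gamma}_0\toq{L}_0\tod{\eta}}{\tilde{\tod{\eta}}}$ yields the same quantity, establishing \eqref{self20}.

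The only mild obstacle is the bookkeeping in item (2): one must keep track of which factor plays the role of an element of $\mathscrbf{S}$ and which one of $\mathscrbf{E}_0$ so that Hill's orthogonality applies in the correct direction. Once that is arranged, the remainder is algebraic and relies solely on the major symmetry of $\toq{L}_0$.
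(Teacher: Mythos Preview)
Your proof is correct and follows essentially the same route as the paper: the paper invokes the appendix identities $\toq{\Gamma}_0\toq{L}_0\toq{\Gamma}_0=\toq{\Gamma}_0$ and the reciprocity $\pdse{\tod{\tau}}{\toq{\Gamma}_0\tilde{\tod{\tau}}}=\pdse{\tilde{\tod{\tau}}}{\toq{\Gamma}_0\tod{\tau}}$, whose proofs there rest on exactly the same ingredients you use directly (uniqueness in the Eshelby problem and Hill's orthogonality, Lemma~\ref{Hill:orth}). The only difference is organizational: you establish (3) first and obtain (1) as a corollary, whereas the paper cites the pre-packaged appendix lemma for (1) and an external reference for (3).
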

\begin{proof}  1. The idempotence of $\toq{\Gamma}_0 \toq{L}_0$ is a direct consequence of (\ref{prop3bis}) in Appendix \ref{app:Green}.\\
2. A straightforward calculation shows that 
\[
\pse{\tod{\eta}}{ \toq{\Gamma}_0 \toq{L}_0 \tilde{\tod{\eta}}} = \pdse{\toq{L}_0\tod{\eta}} {\toq{\Gamma}_0\toq{L}_0\tilde{\tod{\eta}}} = \pdse{\tod{\tau}}{ \toq{\Gamma}_0 \tilde{\tod{\tau}}},
\]
where $\tod{\tau} = \toq{L}_0\tod{\eta}$ and $\tilde{\tod{\tau}} = \toq{L}_0\tilde{\tod{\eta}}$. Then \eqref{self20} follows from the reciprocity identity satisfied by $\toq{\Gamma}_0$, see Lemma \ref{lemma:Green} in Appendix \ref{app:Green}.\\
3. The third item is a direct consequence of the definition of $\toq{\Gamma}_0$ and of Property 2 in \cite{Michel}. This allows to conclude, see \cite{Brezis}, that $\toq{\Gamma}_0 \toq{L}_0$ is a projection operator from $\mathscrbf{H}_e$ onto the subspace $\mathscrbf{E}_0$, which is orthogonal for the energetic scalar product \eqref{scalare}.
\end{proof}

A similar lemma can be proved for the operator $\toq{\Delta}_0 \toq{L}_0^{-1}:\mathscrbf{H}_s\to\mathscrbf{H}_s$ using the duality principle of \cite{Milton} and provided that $\mathscrbf{H}_s$ is endowed with the energetic scalar product \eqref{scalars}. In particular, $\toq{\Delta}_0 \toq{L}_0^{-1}$ is the orthogonal \emph{stress} projector from $\mathscrbf{H}_s$ onto $\mathscrbf{S}_0$ for the energetic scalar product \eqref{scalars}. As a consequence of Lemma \ref{lemma:G0L0}, one arrives at the main result of this section:

\begin{proposition}\label{gen:decomp}
Considering the orthogonal decomposition \eqref{prop6} such that $\tod{\eta} = \moy{\tod{\eta}} + \PEz\tod{\eta} + \PEo \tod{\eta}$ for all $\tod{\eta} \in \mathscrbf{H}_e$, then the featured orthogonal projection operators can be expressed in terms of the Green's operators as
\begin{equation}
\PEz=\toq{\Gamma}_0 \toq{L}_0 \qquad \text{and} \qquad \PEo = \toq{L}_0^{-1} \toq{\Delta}_0.
\label{proj1}
\end{equation}
Similarly, the orthogonal projectors that enter the decomposition \eqref{prop7}, i.e., $\tod{\tau} = \moy{\tod{\tau}}  + \PSz \tod{\tau} +  \PSo \tod{\tau}$ for all $\tod{\tau} \in \mathscrbf{H}_s$, are given by
\begin{equation}
\PSz = \toq{\Delta}_0 \toq{L}_0^{-1} \qquad \text{and} \qquad \PSo=\toq{L}_0 \toq{\Gamma}_0.
\label{proj2}
\end{equation}
\end{proposition}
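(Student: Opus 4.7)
My plan is to leverage Lemma \ref{lemma:G0L0} and its dual analog (mentioned right above the proposition) to get the first identities in \eqref{proj1} and \eqref{proj2} essentially for free, and then to characterize the two remaining projectors $\PEo$ and $\PSo$ by uniqueness of the orthogonal decompositions \eqref{decompHe} and \eqref{decompHs}. The only nontrivial inputs beyond the definitions of $\toq{\Gamma}_0$ and $\toq{\Delta}_0$ will be the Hill-type orthogonality properties from Lemma \ref{Hill:orth}, which are what tie together the Hilbertian geometry of $\mathscrbf{H}_e$ and the duality product on $\mathscrbf{H}_s\times\mathscrbf{H}_e$.

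First, the identities $\PEz = \toq{\Gamma}_0\toq{L}_0$ and $\PSz = \toq{\Delta}_0\toq{L}_0^{-1}$ are already the content of Lemma \ref{lemma:G0L0} (and its $\mathscrbf{H}_s$-counterpart obtained by duality), so nothing further is required there. For the equality $\PEo = \toq{L}_0^{-1}\toq{\Delta}_0$, I would proceed in two short steps. Given $\tod{\eta}\in\mathscrbf{H}_e$, the definition \eqref{Delta0} of $\toq{\Delta}_0$ provides directly that $\toq{L}_0^{-1}\toq{\Delta}_0\tod{\eta} - \tod{\eta} \in \mathscrbf{E}$, so it remains to check that $\toq{L}_0^{-1}\toq{\Delta}_0\tod{\eta}$ itself lies in $\mathscrbf{E}^\bot$ for the energetic scalar product \eqref{scalare}. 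This I would verify by computing, for any $\tilde{\tod{\eta}}\in\mathscrbf{E}$,
\[
\pse{\toq{L}_0^{-1}\toq{\Delta}_0\tod{\eta}}{\tilde{\tod{\eta}}} = \pdse{\toq{\Delta}_0\tod{\eta}}{\tilde{\tod{\eta}}} = 0,
\]
the last equality being precisely the Hill-type orthogonality of Lemma \ref{Hill:orth} applied to $\toq{\Delta}_0\tod{\eta}\in\mathscrbf{S}_0$ against $\tilde{\tod{\eta}}\in\mathscrbf{E}$. Uniqueness of the decomposition \eqref{decompHe} then forces $\toq{L}_0^{-1}\toq{\Delta}_0\tod{\eta} = \PEo\tod{\eta}$.

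The argument for $\PSo = \toq{L}_0\toq{\Gamma}_0$ mirrors this one step by step with the roles of $\mathscrbf{H}_e$ and $\mathscrbf{H}_s$ exchanged: the Eshelby characterization \eqref{thermoelas} of $\toq{\Gamma}_0$ yields $\tod{\tau} - \toq{L}_0\toq{\Gamma}_0\tod{\tau}\in\mathscrbf{S}$, while for any $\tilde{\tod{\tau}}\in\mathscrbf{S}$,
\[
\pss{\toq{L}_0\toq{\Gamma}_0\tod{\tau}}{\tilde{\tod{\tau}}} = \pdes{\tilde{\tod{\tau}}}{\toq{\Gamma}_0\tod{\tau}} = 0
\]
again by Lemma \ref{Hill:orth}, since $\toq{\Gamma}_0\tod{\tau}\in\mathscrbf{E}_0$. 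Uniqueness of \eqref{decompHs} then identifies $\toq{L}_0\toq{\Gamma}_0 = \PSo$.

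I do not anticipate any real obstacle. The one place to be careful is the bookkeeping between the two energetic scalar products \eqref{scalare}, \eqref{scalars} and the duality product $\pdse{\cdot}{\cdot}$, specifically the consistent use of the Riesz isomorphism $\toq{R}_s = \toq{L}_0$ to slide between them; once that is clean, each of the two remaining identities reduces to one line plus an invocation of Hill-type orthogonality and uniqueness of the orthogonal sum.
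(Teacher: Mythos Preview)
Your proposal is correct and takes a genuinely different route from the paper's own argument. The paper proceeds by a direct algebraic computation: starting from $\tod{\eta}\in\mathscrbf{H}_e$, it chains the defining problems \eqref{Delta0} and \eqref{thermoelas} together, uses the identity $\toq{\Gamma}_0\toq{\Delta}_0=\tod{0}$, and tracks the mean value carefully to arrive at the pointwise identity
\[
\tod{\eta} = \moy{\tod{\eta}} + \toq{\Gamma}_0\toq{L}_0\tod{\eta} + \toq{L}_0^{-1}\toq{\Delta}_0\tod{\eta},
\]
after which mutual orthogonality is checked using \eqref{prop3}. Your argument instead treats each projector separately: it imports $\PEz=\toq{\Gamma}_0\toq{L}_0$ directly from Lemma~\ref{lemma:G0L0}, and for $\PEo$ verifies the two defining conditions of an orthogonal projection (range in $\mathscrbf{E}^\bot$, complement in $\mathscrbf{E}$) using only Definition~\ref{def:Green} and the Hill-type orthogonality of Lemma~\ref{Hill:orth}, then invokes uniqueness of the orthogonal sum.

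What each approach buys: the paper's computation is self-contained in that it derives the full three-term decomposition in one pass and does not appeal to Lemma~\ref{lemma:G0L0}, but at the cost of some bookkeeping with averages and the auxiliary relation $\toq{\Gamma}_0\toq{\Delta}_0=\tod{0}$. Your approach is shorter and more transparent conceptually, since it isolates exactly which structural fact (Hill orthogonality) is responsible for $\toq{L}_0^{-1}\toq{\Delta}_0$ landing in $\mathscrbf{E}^\bot$; it also makes clear that Lemma~\ref{lemma:G0L0} already does half the work. Either is fine here.
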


\begin{proof}
For all $\tod{\eta} \in \mathscrbf{H}_e$, Definition \ref{def:Green} entails that there exists $\tod{e}\in\mathscrbf{E}$ such that $\toq{L}_0\tod{e} = \toq{\Delta}_0\tod{\eta}-\toq{L}_0\tod{\eta}$.
Using \eqref{Gamma0} with $\tod{\tau}=\toq{L}_0\tod{e}$ then, on the one hand, there exists $\tod{s}\in\mathscrbf{S}$ such that
\begin{equation}\label{id:int}
\tod{s}=\toq{L}_0\toq{\Gamma}_0\tod{\tau}-\tod{\tau}=\toq{L}_0\toq{\Gamma}_0(\toq{\Delta}_0\tod{\eta}-\toq{L}_0\tod{\eta})-(\toq{\Delta}_0\tod{\eta}-\toq{L}_0\tod{\eta})=-\toq{L}_0\toq{\Gamma}_0\,\toq{L}_0\tod{\eta}-\toq{\Delta}_0\tod{\eta}+\toq{L}_0\tod{\eta},
\end{equation}
using the relation $ \toq{\Gamma}_0 \toq{\Delta}_0= \tod{0}$, consequence of Lemma \ref{lemma:Green} in Appendix \ref{app:Green}. On the other hand, introducing the decomposition $\tod{e}=\moy{\tod{e}}+\tod{e}^*$ with $\moy{\tod{e}}\in\mathscrbf{U}_e$ and $\tod{e}^*\in\mathscrbf{E}_0$, then one has 
\[
\tilde{\tod{e}}\overset{\operatorname{def}}{=}\toq{\Gamma}_0\tod{\tau}=\toq{\Gamma}_0\toq{L}_0\tod{e}=\toq{\Gamma}_0\toq{L}_0\big(\moy{\tod{e}}+\tod{e}^*\big)=\tod{e}^*.
\]
Therefore, one obtains that
\[
\tod{s}=\toq{L}_0\tilde{\tod{e}}-\tod{\tau}=\toq{L}_0(\tod{e}^*-\tod{e})=-\toq{L}_0\moy{\tod{e}}.
\]
Finally, according to \eqref{Delta0} one has $\langle\tod{e}\rangle=-\langle\tod{\eta}\rangle$ so that $\tod{s}=\toq{L}_0\langle\tod{\eta}\rangle$. Combining this last equation with \eqref{id:int} yields
\[
\toq{L}_0\langle\tod{\eta}\rangle=-\toq{L}_0\toq{\Gamma}_0\,\toq{L}_0\tod{\eta}-\toq{\Delta}_0\tod{\eta}+\toq{L}_0\tod{\eta},
\]
which after multiplication by $\toq{L}_0^{-1}$ yields the sought identity \eqref{prop6} by defining the orthogonal projections as $\moy{\!\cdot\!}$, $\PEz=\toq{\Gamma}_0 \toq{L}_0$ and $\PEo=\toq{L}_0^{-1} \toq{\Delta}_0$. The fact that they are mutually orthogonal follows from the definitions of the Green's operators as well as \eqref{prop3}. The identity \eqref{prop7} with \eqref{proj2} is obtained by duality.
\end{proof}

\begin{remark}
The strain and stress Hilbert spaces $\mathscrbf{H}_e$ and $\mathscrbf{H}_s$ differ by the physical dimension of their elements. Noticeably, one could avoid working with the two different spaces by considering the space of $\mathscrbf{L}^2_\text{per}\!\left(\mathcal{V}\right)$-tensor fields endowed with the standard $L^2$-scalar product. Based on Proposition \ref{gen:decomp}, when \eqref{prop6} is multiplied by $\toq{L}_0^{1/2}$ or \eqref{prop7} by $\toq{L}_0^{-1/2}$ then a single orthogonal decomposition is obtained as
\begin{equation}
\forall \tod{\zeta} \in \mathscrbf{L}^2_\text{per}\!\left(\mathcal{V}\right): \quad \tod{\zeta} = \moy{\tod{\zeta}} + \toq{L}_0^{1/2}\toq{\Gamma}_0 \toq{L}_0^{1/2}\tod{\zeta} + \toq{L}_0^{-1/2} \toq{\Delta}_0\toq{L}_0^{-1/2} \tod{\zeta}.
\label{gen:decomp:L2}
\end{equation}
In \eqref{gen:decomp:L2}, the operators $\moy{\!\cdot\!}$, $\toq{L}_0^{1/2}\toq{\Gamma}_0 \toq{L}_0^{1/2}$ and $\toq{L}_0^{-1/2} \toq{\Delta}_0\toq{L}_0^{-1/2}$ are three mutually orthogonal projectors in $\mathscrbf{L}^2_\text{per}\!\left(\mathcal{V}\right)$. However, according to this decomposition, a field $\tod{\zeta} \in \mathscrbf{L}^2_\text{per}\!\left(\mathcal{V}\right)$ would have neither the dimension of a strain nor of a stress but be consistent with either
\[
\tod{\zeta}=\toq{L}_0^{1/2}\tod{\eta} \qquad \text{or} \qquad \tod{\zeta}=\toq{L}_0^{-1/2}\tod{\tau}
\]
for $\tod{\eta}\in\mathscrbf{H}_e$ or $\tod{\tau}\in\mathscrbf{H}_s$. For this particular reason, we prefer to keep working with the spaces $\mathscrbf{H}_e$ and $\mathscrbf{H}_s$ that are dimensionally consistent with the mechanical problem considered.
\end{remark}

\section{Geometric variational principles}\label{sec:geo:var:princip}

\subsection{Strain-based variational principles}\label{sec:min:proj}

\subsubsection{Minimum energy principle}

With a slight abuse of notation, the energetic functional in \eqref{princip0} is redefined as $\mathcal{J}:\mathscrbf{E}_0\to\mathbb{R}$ with
\begin{equation}
\mathcal{J}(\tod{e}^*)=\frac{1}{2} \pdse{\toq{L}(\macr{\tod{\eps}} + \tod{e}^*)}{\macr{\tod{\eps}} + \tod{e}^*},
\label{cost:fct:J}
\end{equation}
with $\macr{\tod{\eps}}\in\mathscrbf{U}_e$ given. Doing so, the energetic variational principle \eqref{princip0} is equivalent to the minimization of the functional \eqref{cost:fct:J} with respect to $\tod{e}^*\in \mathscrbf{E}_0$. As discussed in the introduction, this optimization constraint together with the definition of $\mathcal{J}$ itself enforce both conditions (\ref{local:fct:space}.i) and (\ref{local:fct:space}.ii). The remaining equilibrium equation (\ref{local:fct:space}.iii) for the stress field, i.e., $\tod{\sigma}\in \mathscrbf{S}$, is achieved through the minimization of $\mathcal{J}$ over $\mathscrbf{E}_0$ as the space $\mathscrbf{S}$ constitutes the polar space of the former. In this context, the gradient of $\mathcal{J}$ at $\tod{e}^*\in\mathscrbf{E}_0$ can be computed. It is defined as the element of $\mathscrbf{E}_0$ such that
\[
\pse{ \tou{\nabla}\mathcal{J} (\tod{e}^*)}{\tilde{\tod{e}}^*} = \lim_{t\rightarrow 0} \frac{\mathcal{J}(\tod{e}^* + t \tilde{\tod{e}}^* )-\mathcal{J}(\tod{e}^*)}{t} = 
\pdse{\toq{L}\tod{e}}{\tilde{\tod{e}}^*}=\pse{ \toq{L}_0^{-1}\toq{L}\tod{e}  }{ \tilde{\tod{e}}^* } \qquad \forall \tilde{\tod{e}}^* \in \mathscrbf{E}_0,
\]
where $\tod{e}=\macr{\tod{\eps}} + \tod{e}^*$. This entails that $\big(\tou{\nabla}\mathcal{J} (\tod{e}^*)-\toq{L}_0^{-1}\toq{L}\tod{e} \big)\in\mathscrbf{E}_0^\bot$, or equivalently
\[
\PEz\big(\tou{\nabla}\mathcal{J} (\tod{e}^*)-\toq{L}_0^{-1}\toq{L}\tod{e} \big)=\tod{0}.
\]
As $\tou{\nabla}\mathcal{J} (\tod{e}^*)\in\mathscrbf{E}_0$ and $\PEz\toq{L}_0^{-1}\toq{L}\tod{e}=\toq{\Gamma}_0\toq{L}\tod{e}$, it implies that
\begin{equation}\label{grad:J}
\tou{\nabla}\mathcal{J} (\tod{e}^*) = \toq{\Gamma}_0 \toq{L} (\macr{\tod{\eps}} + \tod{e}^*).
\end{equation}
Note that the identity \eqref{grad:J} was first reported in \cite{Kabel} where the scalar product \eqref{scalare} is recovered through the change of a metric.

The necessary optimality conditions corresponding to the minimization of the functional $\mathcal{J}$ reads
\begin{equation}
\tou{\nabla}\mathcal{J} (\tod{\eps}^*) = \tod{0}, \qquad \text{i.e.,} \qquad \toq{\Gamma}_0 \toq{L} (\macr{\tod{\eps}} + \tod{\eps}^*)=\tod{0}.
 \label{optimality0}
\end{equation}
Using the properties of the Green's operator $\toq{\Gamma}_0$ such that for all $\tod{\eta}\in\mathscrbf{H}_e$ one has
\[
\toq{\Gamma}_0\toq{L}\tod{\eta}=\toq{\Gamma}_0\toq{\delta L}\tod{\eta}+\PEz\tod{\eta},
\]
with $\toq{\delta L}(\tou{x})=\toq{L}(\tou{x}) -\toq{L}_0$, and according to the decomposition $\tod{\eps}=\macr{\tod{\eps}} + \tod{\eps}^*$, then \eqref{optimality0} is equivalent to
\begin{equation}
(\toq{I} + \toq{\Gamma}_0 \toq{\delta L})\tod{\eps}  = \macr{\tod{\eps}}
 \label{LS1}
\end{equation}
where $\toq{I}$ is the identity operator. This equation coincides with the (periodic) Lippmann-Schwinger equation used by \cite{Moulinec} in linear composites.

\begin{remark}
Consider a composite made of non-linear constituents with a local energy density $w(\tod{x},\tod{e})$ which is convex with respect to $\tod{e}$ and bounded from above and below by two quadratic functions $\frac{1}{2}L\tod{e}:\tod{e}$ and $\frac{1}{2}\mu\tod{e}:\tod{e}$ respectively, such as $\big(\frac{1}{2}L\tod{e}:\tod{e}-w(\tod{x},\tod{e})\big)$ and $\big(w(\tod{x},\tod{e})-\frac{1}{2}\mu\tod{e}:\tod{e}\big)$ are convex. Then the response of the composite derives from an effective potential $\eff{w}$ defined as
\[
\eff{w}(\macr{\tod{\eps}})=\min_{ \tod{e}^*\in \mathscrbf{E}_0 }\mathcal{J}(\tod{e}^*) \quad \text{with} \quad \mathcal{J}(\tod{e}^*) = \moy{ w(\tod{x},\macr{\tod{\eps}}+\tod{e}^*) }.
\]
In this context, it can be proved that
\[
\tou{\nabla}\mathcal{J} (\tod{e}^*) = \toq{\Gamma}_0 \, \partial_{\tod{e}}w(\tod{x},\macr{\tod{\eps}}+\tod{e}^*)
\]
in $\mathscrbf{E}_0$ endowed with the energetic scalar product \eqref{scalare}.
\end{remark}

\subsubsection{Geometric alternative to the energetic principle}\label{sec:connect:en}

Rather than minimizing the energetic functional $\mathcal{J}$ in \eqref{cost:fct:J}, one can adopt a geometric approach by minimizing the norm of its gradient \eqref{grad:J}. To do so, consider the geometric functional $\mathcal{N}:\mathscrbf{E}_0\to\mathbb{R}$ defined as
\begin{equation}
\mathcal{N}(\tod{e}^*) = \frac{1}{2} \| \tou{\nabla}\mathcal{J} (\tod{e}^*) \|^2_{\mathscrbf{H}_e} = \frac{1}{2} \| \toq{\Gamma}_0\toq{L}(\macr{\tod{\eps}} + \tod{e}^*) \|^2_{\mathscrbf{H}_e} \qquad \forall \tod{e}^*\in\mathscrbf{E}_0,
\label{cost:fct:N:bis}
\end{equation}
where $\macr{\tod{\eps}}\in\mathscrbf{U}_e$ is given. According to the Riesz mapping \eqref{Riesz}, it is seen that $\|\tod{\eta} \|_{\mathscrbf{H}_e}=\|\toq{L}_0\tod{\eta} \|_{\mathscrbf{H}_s}$ for all $\tod{\eta}\in\mathscrbf{H}_e$. Moreover, Proposition \ref{gen:decomp} shows that the orthogonal projector from $\mathscrbf{H}_s$ onto $\mathscrbf{S}^\bot$ for the scalar product \eqref{scalars} is given by $\PSo=\toq{L}_0 \toq{\Gamma}_0$. Therefore, \eqref{cost:fct:N:bis} can be recast as
\begin{equation}
\mathcal{N}(\tod{e}^*)=\frac{1}{2} \| \PSo\toq{L}(\macr{\tod{\eps}} + \tod{e}^*) \|^2_{\mathscrbf{H}_s},
\label{cost:fct:N}
\end{equation}
with the strain field solution $\tod{\eps}^*$ being characterized by the variational problem 
\begin{equation}
\tod{\eps}^* = \operatorname{arg}\underset{ \tod{e}^* \in \mathscrbf{E}_0 }{\operatorname{min}}   \   \mathcal{N}(\tod{e}^*).
\label{princip:geo1}
\end{equation}
The stationary value of $\mathcal{N}$ is zero, which is reached when $\toq{L}\tod{e}\in\mathscrbf{S}$ with $\tod{e}=\macr{\tod{\eps}} + \tod{e}^*$. This constitutes the sought geometric variational principle where the stress admissibility condition is achieved through the minimization of the norm of the projection of the test field defined as $\tod{\tau}=\toq{L}(\macr{\tod{\eps}} + \tod{e}^*)$ in the orthogonal space $\mathscrbf{S}^\bot$. This justifies the notation used in Section \ref{sec:obj}, i.e., $\mathcal{N}(\tod{e}^*)=\Updelta{\textrm{Equil}}(\toq{L}\big(\macr{\tod{\eps}} + \tod{e}^*) \big)$ with ``$\Updelta{\textrm{Equil}}$'' being a measure of the static admissibility of $\tod{\tau}=\toq{L}(\macr{\tod{\eps}} + \tod{e}^*)$.\\

Lastly, note that for the strain-based geometric variational principle (\ref{cost:fct:N}--\ref{princip:geo1}), the effective properties $\eff{\toq{L}}$ are computed according to the energetic identity \eqref{princip0}. In this context, the main result of this section is the following:

\begin{proposition}\label{grad:N}
The gradient of $\mathcal{N}$ in $\mathscrbf{E}_0$ endowed with the energetic scalar product \eqref{scalare}, and at $\tod{e}^*$, is the element of $\mathscrbf{E}_0$ defined as
\[
\tou{\nabla}\mathcal{N}(\tod{e}^*)=\toq{\Gamma}_0\toq{L}\toq{\Gamma}_0\toq{L}(\macr{\tod{\eps}} + \tod{e}^*).
\]
where $\toq{\Gamma}_0$ is the strain Green's operator associated with the reference elastic medium with stiffness $\toq{L}_0$.   
\end{proposition}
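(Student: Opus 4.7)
The plan is to compute the Gâteaux derivative of $\mathcal{N}$ on $\mathscrbf{E}_0$ and then identify its Riesz representative in $\mathscrbf{E}_0$ equipped with the energetic scalar product $\pse{\cdot}{\cdot}$. Observing first that the range of $\toq{\Gamma}_0$ lies in $\mathscrbf{E}_0$, the map $\tod{e}^*\mapsto\toq{\Gamma}_0\toq{L}(\macr{\tod{\eps}}+\tod{e}^*)$ is affine from $\mathscrbf{E}_0$ into $\mathscrbf{E}_0$, and bilinearity of $\pse{\cdot}{\cdot}$ applied to the half squared norm \eqref{cost:fct:N:bis} immediately gives, for any $\tilde{\tod{e}}^*\in\mathscrbf{E}_0$,
\[
\mathcal{N}'(\tod{e}^*)[\tilde{\tod{e}}^*] \;=\; \pse{\toq{\Gamma}_0\toq{L}(\macr{\tod{\eps}}+\tod{e}^*)}{\toq{\Gamma}_0\toq{L}\tilde{\tod{e}}^*}.
\]

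The next step is to transfer the right-hand operator $\toq{\Gamma}_0\toq{L}$ to the left slot so as to express the directional derivative in the form $\pse{\tou{g}}{\tilde{\tod{e}}^*}$ for some $\tou{g}\in\mathscrbf{E}_0$. To that end, I would show that the restriction $B=\toq{\Gamma}_0\toq{L}:\mathscrbf{E}_0\to\mathscrbf{E}_0$ is self-adjoint with respect to $\pse{\cdot}{\cdot}$. The argument chains four routine ingredients: the definition \eqref{scalare} of $\pse{\cdot}{\cdot}$ rewrites $\pse{Bu}{v}$ as a doubly contracted average; the major symmetry of $\toq{L}_0$ transfers $\toq{L}_0$ onto $v$; the reciprocity identity of $\toq{\Gamma}_0$ (Lemma \ref{lemma:Green} in Appendix \ref{app:Green}) then swaps the arguments of $\toq{\Gamma}_0$; and the identity $\toq{\Gamma}_0\toq{L}_0v=v$ valid on $\mathscrbf{E}_0$ (third item of Lemma \ref{lemma:G0L0}) reduces the remaining expression to $\moy{\toq{L}u:v}$, from which the major symmetry of $\toq{L}$ concludes $\pse{Bu}{v}=\pse{u}{Bv}$.

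Self-adjointness at hand, the directional derivative rewrites as
\[
\mathcal{N}'(\tod{e}^*)[\tilde{\tod{e}}^*] \;=\; \pse{\toq{\Gamma}_0\toq{L}\toq{\Gamma}_0\toq{L}(\macr{\tod{\eps}}+\tod{e}^*)}{\tilde{\tod{e}}^*},
\]
and since $\toq{\Gamma}_0\toq{L}\toq{\Gamma}_0\toq{L}(\macr{\tod{\eps}}+\tod{e}^*)$ is itself in $\mathscrbf{E}_0$ as an element of the range of $\toq{\Gamma}_0$, the uniqueness of the Riesz representative in the Hilbert space $\bigl(\mathscrbf{E}_0,\pse{\cdot}{\cdot}\bigr)$ yields the announced expression of $\tou{\nabla}\mathcal{N}(\tod{e}^*)$. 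The main obstacle is precisely the self-adjointness of $B$: unlike $\toq{\Gamma}_0\toq{L}_0=\PEz$, whose self-adjointness on $\mathscrbf{H}_e$ is already stated in Lemma \ref{lemma:G0L0}, the operator $B$ involves the non-uniform stiffness $\toq{L}$, so that result cannot be invoked directly and the combination of the reciprocity of $\toq{\Gamma}_0$ with the symmetry of $\toq{L}$ together with the restriction to $\mathscrbf{E}_0$ is the essential algebraic input that drives the proof.
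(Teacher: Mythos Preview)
Your proof is correct and uses the same core ingredients as the paper's (the reciprocity identity for $\toq{\Gamma}_0$, the major symmetry of $\toq{L}$, and $\toq{\Gamma}_0\toq{L}_0=\toq{I}$ on $\mathscrbf{E}_0$). The only organizational difference is that you package these facts as the self-adjointness of $\toq{\Gamma}_0\toq{L}$ restricted to $\mathscrbf{E}_0$ and thereby stay in that subspace throughout, whereas the paper first represents the directional derivative by $\toq{L}_0^{-1}\toq{L}\toq{\Gamma}_0\toq{L}\tod{e}\in\mathscrbf{H}_e$ and then applies the projector $\PEz$ to land back in $\mathscrbf{E}_0$.
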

\begin{proof}
 The gradient of $\mathcal{N}$ in $\mathscrbf{E}_0$ endowed with the energetic scalar product \eqref{scalare} is defined as the element of $\mathscrbf{E}_0$ such that
 \[
\pse{ \tou{\nabla}\mathcal{N} (\tod{e}^*)}{\tilde{\tod{e}}^*} = \lim_{t\rightarrow 0} \frac{\mathcal{N}(\tod{e}^* + t \tilde{\tod{e}}^* )-\mathcal{N}(\tod{e}^*)}{t} = 
\pse{\toq{\Gamma}_0\toq{L}\tod{e}}{\toq{\Gamma}_0\toq{L}\tilde{\tod{e}}^*} \qquad \forall \tilde{\tod{e}}^* \in \mathscrbf{E}_0,
\]
where $\tod{e}=\macr{\tod{\eps}} + \tod{e}^*$. Making use of the properties of $\toq{\Gamma}_0$, then straightforward calculations lead to 
\[
\pse{\tou{\nabla}\mathcal{N}(\tod{e}^*)}{\tilde{\tod{e}}^*} = \pdse{\toq{L}\toq{\Gamma}_0\toq{L}\tod{e} }{\tilde{\tod{e}}^*}=\pse{\toq{L}_0^{-1}\toq{L}\toq{\Gamma}_0\toq{L}\tod{e}}{\tilde{\tod{e}}^*} \qquad \forall \tilde{\tod{e}}^* \in \mathscrbf{E}_0.
\]
The above equation implies that $\big(\tou{\nabla}\mathcal{N} (\tod{e}^*)-\toq{L}_0^{-1}\toq{L}\toq{\Gamma}_0\toq{L}\tod{e} \big)\in\mathscrbf{E}_0^\bot$ and therefore
\[
\PEz\big(\tou{\nabla}\mathcal{N} (\tod{e}^*)-\toq{L}_0^{-1}\toq{L}\toq{\Gamma}_0\toq{L}\tod{e} \big)=\tod{0}.
\]
As $\tou{\nabla}\mathcal{N} (\tod{e}^*)\in\mathscrbf{E}_0$ and $\PEz\toq{L}_0^{-1}\toq{L}\toq{\Gamma}_0\toq{L}\tod{e} =\toq{\Gamma}_0\toq{L}\toq{\Gamma}_0\toq{L}\tod{e}$, this leads to
\[
\tou{\nabla}\mathcal{N}(\tod{e}^*)-\toq{\Gamma}_0\toq{L}\toq{\Gamma}_0\toq{L}\tod{e}=\tod{0}.
\]
\end{proof}

At this point one has shown that the strain field $\tod{\eps}=\macr{\tod{\eps}}+\tod{\eps}^*$ solution of the local problem \eqref{local1} with imposed macroscopic strain $\macr{\tod{\eps}}\in\mathscrbf{U}_e$ can be characterized by either the energetic or the geometric variational principles, using the functionals \eqref{cost:fct:J} or \eqref{cost:fct:N} respectively, which pertain both to the fluctuating term $\tod{e}^*\in\mathscrbf{E}_0$. To shed light on the connection between these two variational principles we consider that $\tod{\eps}^*$ will be computed by an iterative gradient-based minimization scheme. The following proposition establishes a link between the two variational principles.
\begin{proposition}\label{prop:equiv:J:N}
For any $\tod{e}^*\in\mathscrbf{E}_0$, the vector $-\tou{\nabla}\mathcal{N}(\tod{e}^*)$ is a descent direction for the energetic functional $\mathcal{J}$ and, reciprocally, $-\tou{\nabla}\mathcal{J}(\tod{e}^*)$ is a descent direction for the geometric functional $\mathcal{N}$.
\end{proposition}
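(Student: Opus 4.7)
The plan is to establish both claims simultaneously by computing the single scalar quantity $\pse{\tou{\nabla}\mathcal{J}(\tod{e}^*)}{\tou{\nabla}\mathcal{N}(\tod{e}^*)}$ and showing that it is strictly positive whenever $\tod{e}^*$ is not the critical point. Indeed, recalling that a vector $\tod{d}\in\mathscrbf{E}_0$ is a descent direction of a functional $f$ at $\tod{e}^*$ if $\pse{\tou{\nabla}f(\tod{e}^*)}{\tod{d}}<0$, the two statements of the proposition amount, respectively, to $\pse{\tou{\nabla}\mathcal{J}(\tod{e}^*)}{-\tou{\nabla}\mathcal{N}(\tod{e}^*)}<0$ and $\pse{\tou{\nabla}\mathcal{N}(\tod{e}^*)}{-\tou{\nabla}\mathcal{J}(\tod{e}^*)}<0$. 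By symmetry of the energetic scalar product these are exactly the same inequality, so a single computation handles both claims at once.

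First, I would combine \eqref{grad:J} and Proposition \ref{grad:N} to note the algebraic relation $\tou{\nabla}\mathcal{N}(\tod{e}^*) = \toq{\Gamma}_0\toq{L}\tou{\nabla}\mathcal{J}(\tod{e}^*)$. Setting $\tod{g} \overset{\operatorname{def}}{=} \tou{\nabla}\mathcal{J}(\tod{e}^*)\in\mathscrbf{E}_0$, the target quantity reduces to $\pse{\tod{g}}{\toq{\Gamma}_0\toq{L}\tod{g}}$. I would then collapse this expression by chaining three identities already available in the paper: (i) the Riesz identity \eqref{Riesz} rewrites it as the duality product $\pdse{\toq{L}_0\tod{g}}{\toq{\Gamma}_0\toq{L}\tod{g}}$; (ii) the reciprocity identity satisfied by $\toq{\Gamma}_0$ (Lemma \ref{lemma:Green} in Appendix \ref{app:Green}, already invoked in the proof of Lemma \ref{lemma:G0L0}) swaps the two dual slots, yielding $\pdse{\toq{L}\tod{g}}{\toq{\Gamma}_0\toq{L}_0\tod{g}}$; (iii) since $\tod{g}\in\mathscrbf{E}_0$, item 3 of Lemma \ref{lemma:G0L0} gives $\toq{\Gamma}_0\toq{L}_0\tod{g} = \tod{g}$. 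Altogether,
\[
\pse{\tou{\nabla}\mathcal{J}(\tod{e}^*)}{\tou{\nabla}\mathcal{N}(\tod{e}^*)} = \pdse{\toq{L}\tod{g}}{\tod{g}} = \moy{\toq{L}(\tou{x})\tod{g}(\tou{x})\dc\tod{g}(\tou{x})}.
\]

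Finally, the pointwise positive-definiteness of $\toq{L}(\tou{x})$ ensures that this right-hand side is strictly positive unless $\tod{g}\equiv\tod{0}$, in which case \eqref{optimality0} forces $\tod{e}^*$ to be the critical point of $\mathcal{J}$ and, by the relation $\tou{\nabla}\mathcal{N}(\tod{e}^*)=\toq{\Gamma}_0\toq{L}\tod{g}$, of $\mathcal{N}$ as well. Both descent statements follow. I do not foresee a serious obstacle: the whole argument is an algebraic cancellation produced by composing the Riesz isomorphism with the reciprocity of $\toq{\Gamma}_0$. The only subtlety worth flagging is the implicit exclusion of the critical point, which is inherent to the very notion of descent direction and which here happens to coincide for the two functionals, confirming that $\mathcal{J}$ and $\mathcal{N}$ share the same minimizer $\tod{\eps}^*$.
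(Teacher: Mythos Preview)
Your proof is correct and follows essentially the same route as the paper's: both reduce the computation to $\pse{\tod{g}}{\toq{\Gamma}_0\toq{L}\tod{g}}$ with $\tod{g}=\toq{\Gamma}_0\toq{L}(\macr{\tod{\eps}}+\tod{e}^*)\in\mathscrbf{E}_0$ (the paper calls this $\tilde{\tod{e}}^*$), then collapse it to $\pdse{\toq{L}\tod{g}}{\tod{g}}$ via the reciprocity of $\toq{\Gamma}_0$ and the projector identity $\toq{\Gamma}_0\toq{L}_0\tod{g}=\tod{g}$. Your explicit handling of the degenerate case $\tod{g}=\tod{0}$ (the shared critical point) is a welcome clarification that the paper glosses over.
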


\begin{proof}
A descent direction for a given functional, say $\mathcal{J}$, at $\tod{e}^*$ is defined as a tensor $\tod{p}\in\mathscrbf{E}_0$ such that $\pse{\tod{p}}{\tou{\nabla}\mathcal{J}(\tod{e}^*)}<0$. Owing to Proposition \ref{grad:N} and Equation \eqref{grad:J}, then for any $\tod{e}^*\in\mathscrbf{E}_0$, one has
\[
-\pse{\tou{\nabla}\mathcal{J}(\tod{e}^*)}{\tou{\nabla}\mathcal{N}(\tod{e}^*)}=-\pse{\toq{\Gamma}_0\toq{L}\tod{e}}{\toq{\Gamma}_0\toq{L}\toq{\Gamma}_0\toq{L}\tod{e}} = -\pse{\tilde{\tod{e}}^*}{\toq{\Gamma}_0\toq{L}\tilde{\tod{e}}^*}
\]
with $\tod{e}=\macr{\tod{\eps}}+\tod{e}^*$ and where we introduced $\tilde{\tod{e}}^*=\toq{\Gamma}_0\toq{L}\tod{e} \in \mathscrbf{E}_0$. The properties of $\toq{\Gamma}_0$ entail
\[
\pse{\tilde{\tod{e}}^*}{\toq{\Gamma}_0\toq{L}\tilde{\tod{e}}^*}=\pdse{\toq{L}\tilde{\tod{e}}^*}{\toq{\Gamma}_0\toq{L}_0\tilde{\tod{e}}^*}=\pdse{\toq{L}\tilde{\tod{e}}^*}{\tilde{\tod{e}}^*}.
\]
The positive definiteness of the quadratic form $\pdse{\toq{L}\tilde{\tod{e}}^*}{\tilde{\tod{e}}^*}$ allows to conclude that
\[
-\pse{\tou{\nabla}\mathcal{J}(\tod{e}^*)}{\tou{\nabla}\mathcal{N}(\tod{e}^*)} <0.
\]
\end{proof}
This result shows that a gradient descent algorithm based on either $\tou{\nabla}\mathcal{J}$ or $\tou{\nabla}\mathcal{N}$ would result in the simultaneous minimization of both functionals $\mathcal{J}$ and $\mathcal{N}$.

\subsection{A two-field variational principle}\label{two:field:princip}

As discussed in Section \ref{sec:obj}, there are situations where the constitutive relations are partially, or even fully, unknown. In this context, we now introduce a variational principle that allows leeway in treating these relations through a proper minimization, as it is done for the stress and strain admissibility conditions.

\subsubsection{Minimum projections principle}\label{sec:min:proj:two:field}

First, the case of linear constituents is considered and the local problem \eqref{local1} with prescribed macroscopic strain $\macr{\tod{\eps}}\in\mathscrbf{U}_e$ is addressed in its condensed form \eqref{local:fct:space}. On the one hand, to deal with Eqn. (\ref{local:fct:space}.ii) in a variational setting we draw from the concept of error in constitutive relations, initially introduced in \cite{LL83} for error estimation in the finite element method, and define the following functional locally for all $\tou{x}\in\mathcal{V}$:
\begin{equation}\label{ecr:lin:0}
r(\tou{x},\tod{\tau},\tod{\eta})=\frac{1}{2}\toq{L}\tod{\eta}:\tod{\eta} + \frac{1}{2}\tod{\tau}:\toq{L}^{-1}\tod{\tau} - \tod{\tau}:\tod{\eta} \qquad \forall (\tod{\tau},\tod{\eta})\in\mathscrbf{H}_s\times\mathscrbf{H}_e.
\end{equation}
While expressed in this form this functional can be easily generalized to non-linear composites, see the next section, it can be conveniently rewritten in the linear case as
\begin{equation}\label{ecr:lin}
r(\tou{x},\tod{\tau},\tod{\eta})= \frac{1}{2} \big(\tod{\tau} - \toq{L}\tod{\eta} \big):  \toq{L}^{-1}\big( \tod{\tau} - \toq{L}\tod{\eta}\big),
\end{equation}
which makes clear that $r(\tou{x},\tod{\tau},\tod{\eta})\geq0$ in $\mathcal{V}$, while $r(\tou{x},\tod{\tau},\tod{\eta})=0$ locally if and only if $\tod{\tau}(\tou{x}) = \toq{L}(\tou{x})\tod{\eta}(\tou{x})$. Note that we use the tensor $\toq{L}^{-1}$ to define the quadratic form \eqref{ecr:lin}, rather than the reference tensor $\toq{L}_0^{-1}$ that is used in the energetic scalar product \eqref{scalars}. Our motivation to do so is to be able to generalize the formulation \eqref{ecr:lin} to the case of non-linear composites.

On the other hand, the stress admissibility condition (\ref{local:fct:space}.iii), i.e., $\tod{\tau}\in\mathscrbf{S}$, is handled as in the geometric variational principle (\ref{cost:fct:N}--\ref{princip:geo1}) through the minimization of the projection norm $\| \PSo \tod{\tau} \|_{\mathscrbf{H}_s}$. Likewise, the strain admissibility condition (\ref{local:fct:space}.i), i.e., $(\tod{\eta}-\macr{\tod{\eps}})\in\mathscrbf{E}_0$, is achieved by minimizing the norm $\| \PEzo(\tod{\eta}-\macr{\tod{\eps}})  \|_{\mathscrbf{H}_e}$ of the projection onto the subspace orthogonal to $\mathscrbf{E}_0$. According to the orthogonal decomposition \eqref{prop6}, then $\mathscrbf{E}_0^\bot=\mathscrbf{U}_e\oplus\mathscrbf{E}^\bot$ and $\PEzo(\tod{\eta}-\macr{\tod{\eps}})=\big(\toq{I}-\PEz\big)\tod{\eta}-\macr{\tod{\eps}}$, so that it amounts to the minimization of the functional $\tod{\eta} \mapsto \|\big(\toq{I}-\PEz\big)\tod{\eta}-\macr{\tod{\eps}} \|_{\mathscrbf{H}_e} $.

In summary, considering that none of the equations in \eqref{local:fct:space} is enforced exactly leads to the introduction of the cost functional $\mathcal{P}:\mathscrbf{H}_s\times\mathscrbf{H}_e\to\mathbb{R}$ defined as
\begin{equation}\label{cost:fct:P}
\mathcal{P}(\tod{\tau},\tod{\eta})=\Updelta{\textrm{Compat}}(\tod{\eta}) + \Updelta{\textrm{Const}}(\tod{\tau},\tod{\eta})  + \Updelta{\textrm{Equil}}(\tod{\tau}),
\end{equation}
with ``$\Updelta{\textrm{Compat}}$'', ``$\Updelta{\textrm{Const}}$'' and ``$\Updelta{\textrm{Equil}}$'' being error measures in kinematic, material and static admissibilities respectively. Based on the above, these terms are defined as: 
\begin{equation}\label{def:err:fct}
\Updelta{\textrm{Compat}}(\tod{\eta})=\frac{1}{2} \| \big(\toq{I}-\PEz\big)\tod{\eta}-\macr{\tod{\eps}} \|^2_{\mathscrbf{H}_e}, \qquad \Updelta{\textrm{Const}}(\tod{\tau},\tod{\eta})=\moy{r(\tou{x},\tod{\tau},\tod{\eta})}, \qquad \Updelta{\textrm{Equil}}(\tod{\tau})=\frac{1}{2} \| \PSo \tod{\tau} \|^2_{\mathscrbf{H}_s}.
\end{equation}
The stress and strain fields pair $(\tod{\sig},\tod{\eps})$ solution to \eqref{local:fct:space} are then characterized by the variational problem:
\begin{equation}\label{var:princip:P}
(\tod{\sig},\tod{\eps}) = \underset{ (\tod{\tau},\tod{\eta}) \in  \mathscrbf{H}_s\times\mathscrbf{H}_e }{\operatorname{arg\, min}}   \  \mathcal{P}(\tod{\tau},\tod{\eta}).
\end{equation}
Note that the stationary value of $\mathcal{P}$ is zero with each of its additive subparts being zero too at the solution. In the perspective of implementing a gradient-based minimization scheme for \eqref{var:princip:P} and as done previously, we now compute the gradient of the functional \eqref{cost:fct:P} in $\mathscrbf{H}_s\times\mathscrbf{H}_e$ endowed with the cross scalar product defined by \eqref{scalars} and \eqref{scalare}.

\begin{proposition}\label{prop:grad:P}
The partial gradients of $\mathcal{P}$ with respect to $\tod{\tau}$ and to $\tod{\eta}$ in $\mathscrbf{H}_s$ and $\mathscrbf{H}_e$ respectively, each being endowed with the associated energetic scalar product \eqref{scalars} and \eqref{scalare} are given by:
\[\left\{\begin{aligned}
&   \tou{\nabla}_{\!\tod{\tau}}\mathcal{P}(\tod{\tau},\tod{\eta}) =  \toq{L}_0 \big(\toq{L}^{-1}\tod{\tau} - \tod{\eta} \big) + \PSo \tod{\tau}, \\[1mm]
&  \tou{\nabla}_{\!\tod{\eta}}\mathcal{P}(\tod{\tau},\tod{\eta}) = \toq{L}_0^{-1} \big(\toq{L}\tod{\eta} - \tod{\tau}\big) + \big(\toq{I}-\PEz\big)\tod{\eta} - \macr{\tod{\eps}}.
\end{aligned}\right.\]
\end{proposition}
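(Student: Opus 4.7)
The plan is to treat the three summands in $\mathcal{P}$ separately, compute the Gâteaux derivative of each, and then convert the resulting duality products into the energetic scalar products $\pss{\cdot}{\cdot}$ and $\pse{\cdot}{\cdot}$ using the Riesz identification established in \eqref{Riesz}. The key identities I will repeatedly invoke are: for any $\tod{v}\in\mathscrbf{H}_s$, $\pdse{\tilde{\tod{\tau}}}{\toq{L}_0^{-1}\tod{v}} = \pss{\tod{v}}{\tilde{\tod{\tau}}}$; and symmetrically for any $\tod{w}\in\mathscrbf{H}_e$, $\pdse{\tod{w}}{\tilde{\tod{\eta}}} = \pse{\toq{L}_0^{-1}\tod{w}}{\tilde{\tod{\eta}}}$. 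Together with Proposition \ref{gen:decomp}, which tells us that $\PSo$ is an orthogonal projector (hence idempotent and self-adjoint) in $\mathscrbf{H}_s$ and $\PEz$ is one in $\mathscrbf{H}_e$, this will deliver the result almost mechanically.

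For the $\tod{\tau}$-gradient, I would first note that $\Updelta{\textrm{Compat}}$ is independent of $\tod{\tau}$. The derivative of $\Updelta{\textrm{Equil}}(\tod{\tau})=\frac{1}{2}\pss{\PSo\tod{\tau}}{\PSo\tod{\tau}}$ along $\tilde{\tod{\tau}}$ is $\pss{\PSo\tod{\tau}}{\PSo\tilde{\tod{\tau}}} = \pss{\PSo\tod{\tau}}{\tilde{\tod{\tau}}}$ by self-adjointness and idempotence. For $\Updelta{\textrm{Const}}$ the form \eqref{ecr:lin:0} gives directly $\partial_{\tod{\tau}}r = \toq{L}^{-1}\tod{\tau}-\tod{\eta}$, so the directional derivative is $\pdse{\tilde{\tod{\tau}}}{\toq{L}^{-1}\tod{\tau}-\tod{\eta}}$, which by the Riesz identity rewrites as $\pss{\toq{L}_0(\toq{L}^{-1}\tod{\tau}-\tod{\eta})}{\tilde{\tod{\tau}}}$. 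Summing the two contributions and reading off the element of $\mathscrbf{H}_s$ that $\tilde{\tod{\tau}}$ is paired with gives exactly $\tou{\nabla}_{\!\tod{\tau}}\mathcal{P}$.

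For the $\tod{\eta}$-gradient, $\Updelta{\textrm{Equil}}$ drops out. The derivative of $\Updelta{\textrm{Const}}$ along $\tilde{\tod{\eta}}$ is $\pdse{\toq{L}\tod{\eta}-\tod{\tau}}{\tilde{\tod{\eta}}}$, which by Riesz equals $\pse{\toq{L}_0^{-1}(\toq{L}\tod{\eta}-\tod{\tau})}{\tilde{\tod{\eta}}}$. For $\Updelta{\textrm{Compat}}$, the observation to make is that $\toq{I}-\PEz$ is also an orthogonal projector in $\mathscrbf{H}_e$, and since $\macr{\tod{\eps}}\in\mathscrbf{U}_e$ while $\PEz$ projects onto $\mathscrbf{E}_0\perp\mathscrbf{U}_e$, one has $(\toq{I}-\PEz)\macr{\tod{\eps}}=\macr{\tod{\eps}}$, so the argument $(\toq{I}-\PEz)\tod{\eta}-\macr{\tod{\eps}}$ actually lies in the range of $\toq{I}-\PEz$. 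Using self-adjointness of $\toq{I}-\PEz$ collapses the directional derivative to $\pse{(\toq{I}-\PEz)\tod{\eta}-\macr{\tod{\eps}}}{\tilde{\tod{\eta}}}$. Adding the two contributions and identifying the left slot yields the stated expression.

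The only mild obstacle is the bookkeeping around the compatibility term: one must verify that the chain rule, once one differentiates $\|\cdot\|^2_{\mathscrbf{H}_e}$, produces an $(\toq{I}-\PEz)$ acting on the outside that collapses by idempotence and by the fact that $\macr{\tod{\eps}}$ is already in the orthogonal complement of $\mathscrbf{E}_0$; I will check this explicitly so that no spurious projector survives in the final gradient. Everything else is a direct application of the Riesz identifications combined with the projector properties in Proposition \ref{gen:decomp}.
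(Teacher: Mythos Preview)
Your proposal is correct and follows essentially the same route as the paper: compute the G\^ateaux derivative of each summand, convert the resulting duality pairings into the energetic scalar products via the Riesz identification, and collapse the projector terms using self-adjointness and idempotence of $\PSo$ and $\toq{I}-\PEz$. The paper phrases the projector step as ``owing to the properties of the Green's operator $\toq{\Gamma}_0$'' and states the $\Updelta{\textrm{Compat}}$ derivative in its final form directly, whereas you spell out explicitly why $(\toq{I}-\PEz)\macr{\tod{\eps}}=\macr{\tod{\eps}}$ is needed; this extra care is welcome but does not constitute a different argument. One cosmetic slip: in your second Riesz identity you write $\tod{w}\in\mathscrbf{H}_e$, but the first slot of $\pdse{\cdot}{\cdot}$ lives in $\mathscrbf{H}_s$; this does not affect the computation.
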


\begin{proof}
The partial gradient $\tou{\nabla}_{\!\tod{\tau}}\mathcal{P}(\tod{\tau},\tod{\eta})$ of $\mathcal{P}$ with respect to $\tod{\tau}\in\mathscrbf{H}_s$ is defined as the element of $\mathscrbf{H}_s$ that satisfies
\[
\pss{\tou{\nabla}_{\!\tod{\tau}}\mathcal{P}(\tod{\tau},\tod{\eta})}{\tilde{\tod{\tau}}} = \lim_{t\rightarrow 0} \frac{\mathcal{P}(\tod{\tau} + t \tilde{\tod{\tau}}, \tod{\eta} )-\mathcal{P}(\tod{\tau} , \tod{\eta} )}{t} \qquad \forall \tilde{\tod{\tau}}\in\mathscrbf{H}_s.
\]
According to \eqref{ecr:lin}, \eqref{cost:fct:P} and \eqref{def:err:fct} one finds
\[
\pss{\tou{\nabla}_{\!\tod{\tau}}\mathcal{P}(\tod{\tau},\tod{\eta})}{\tilde{\tod{\tau}}} = \moy{ \tilde{\tod{\tau}} : \toq{L}^{-1} \big(\tod{\tau} - \toq{L}\tod{\eta} \big) } + \pss{\PSo\tod{\tau}  }{\PSo\tilde{\tod{\tau}}  } \qquad \forall \tilde{\tod{\tau}}\in\mathscrbf{H}_s.
\]
Owing to the properties of the Green's operator $\toq{\Gamma}_0$, this identity can be recast as
\[
\pss{\tou{\nabla}_{\!\tod{\tau}}\mathcal{P}(\tod{\tau},\tod{\eta})}{\tilde{\tod{\tau}}} = \pss{  \toq{L}_0 \big(\toq{L}^{-1}\tod{\tau} - \tod{\eta} \big) + \PSo\tod{\tau}  }{\tilde{\tod{\tau}}  } \qquad \forall \tilde{\tod{\tau}}\in\mathscrbf{H}_s,
\]
which proves that
\[
\tou{\nabla}_{\!\tod{\tau}}\mathcal{P}(\tod{\tau},\tod{\eta}) =  \toq{L}_0 \big(\toq{L}^{-1}\tod{\tau} - \tod{\eta} \big) + \PSo \tod{\tau} .
\]
Similarly, one seeks $\tou{\nabla}_{\!\tod{\eta}}\mathcal{P}(\tod{\tau},\tod{\eta})$ as the element of $\mathscrbf{H}_e$ satisfying: 
\[
\pse{\tou{\nabla}_{\!\tod{\eta}}\mathcal{P}(\tod{\tau},\tod{\eta})}{\tilde{\tod{\eta}}} = \lim_{t\rightarrow 0} \frac{\mathcal{P}(\tod{\tau} , \tod{\eta}+ t \tilde{\tod{\eta}} )-\mathcal{P}(\tod{\tau} , \tod{\eta} )}{t} \qquad \forall \tilde{\tod{\eta}}\in\mathscrbf{H}_e.
\]
By definition, one has
\[
\pse{\tou{\nabla}_{\!\tod{\eta}}\mathcal{P}(\tod{\tau},\tod{\eta})}{\tilde{\tod{\eta}}} = - \moy{ \big(\tod{\tau} - \toq{L}\tod{\eta} \big):\tilde{\tod{\eta}} } + \pse{ \big(\toq{I}-\PEz\big)\tod{\eta} - \macr{\tod{\eps}}  }{\tilde{\tod{\eta}}} \qquad \forall \tilde{\tod{\eta}}\in\mathscrbf{H}_e,
\]
which can be rewritten as
\[
\pse{\tou{\nabla}_{\!\tod{\eta}}\mathcal{P}(\tod{\tau},\tod{\eta})}{\tilde{\tod{\eta}}} = \pse{ \toq{L}_0^{-1} \big(\toq{L}\tod{\eta} - \tod{\tau} \big) + \big(\toq{I}-\PEz\big)\tod{\eta} - \macr{\tod{\eps}}  }{\tilde{\tod{\eta}}} \qquad \forall \tilde{\tod{\eta}}\in\mathscrbf{H}_e,
\]
an identity which finally yields
\[
\tou{\nabla}_{\!\tod{\eta}}\mathcal{P}(\tod{\tau},\tod{\eta}) =  \toq{L}_0^{-1} \big(\toq{L}\tod{\eta} - \tod{\tau} \big) + \big(\toq{I}-\PEz\big)\tod{\eta} - \macr{\tod{\eps}}.
\]
\end{proof}

Proposition \ref{prop:grad:P} allows to characterize the solution to the variational problem (\ref{cost:fct:P}--\ref{var:princip:P}). The first-order optimality condition $\tou{\nabla}_{\!\tod{\tau}}\mathcal{P}(\tod{\sig},\tod{\eps}) = \tod{0}$ implies that $\toq{L}_0 (\toq{L}^{-1}\tod{\sig} - \tod{\eps} ) \in \mathscrbf{S}^\bot$. Therefore, according to the duality characterization \eqref{dual:charac} and the Riesz mapping \eqref{Riesz}, there exists $\tod{e}^*\in \mathscrbf{E}_0$ such that
\begin{equation}\label{opt:cond:int1}
(\toq{L}^{-1}\tod{\sig} - \tod{\eps} )=\tod{e}^*.
\end{equation}
 Considering the second optimality condition $\tou{\nabla}_{\!\tod{\eta}}\mathcal{P}(\tod{\sig},\tod{\eps}) = \tod{0}$ and applying to it the orthogonal projector $\PEz$ entail $\toq{\Gamma}_0(\toq{L}\tod{\eps} - \tod{\sig})=\tod{0}$. As a consequence, see \eqref{prop3}, there exists $\tod{s}\in \mathscrbf{S}$ such that
\begin{equation}\label{opt:cond:int2}
 (\toq{L}\tod{\eps} - \tod{\sig})=\tod{s}.
\end{equation}
Combining the equations \eqref{opt:cond:int1} and \eqref{opt:cond:int2} leads to $\toq{L}\tod{e}^*=-\tod{s}$. Applying the duality product with $\tod{e}^*$ to the previous equation implies
\[
\pdse{\toq{L}\tod{e}^*}{\tod{e}^*}=-\pdse{\tod{s}}{\tod{e}^*}=0,
\]
where the last equality follows from Lemma \ref{Hill:orth}. Then, the positive definiteness of the quadratic form $\pdse{\toq{L}\tod{e}^*}{\tod{e}^*}$ yields $\tod{e}^*=\tod{0}$. Hence, from \eqref{opt:cond:int1} one obtains $\tod{\sig}=\toq{L}\tod{\eps}$ which, inserted back in the optimality conditions, leads to $\PSo \tod{\sig}=\tod{0}$ and $(\toq{I}-\PEz)\tod{\eps}=\macr{\tod{\eps}}$. In summary, the optimality conditions associated with $\mathcal{P}$ lead to the following equations
\[
(\toq{I}-\toq{\Gamma}_0\toq{L}_0)\tod{\eps}=\macr{\tod{\eps}} , \qquad \tod{\sigma}(\tou{x})=\toq{L}(\tou{x})\tod{\eps}(\tou{x}) \text{ in }\mathcal{V} \qquad \text{and} \qquad \toq{\Gamma}_0\tod{\sig}=\tod{0},
\]
 which are equivalent to the original elasticity problem \eqref{local:fct:space} considered.\\

Finally, note that for the two-field geometric variational principle (\ref{cost:fct:P}--\ref{var:princip:P}), making use of the energetic principle \eqref{princip0} and based on Hill's lemma, the effective properties $\eff{\toq{L}}$ are computed using the identity:
\begin{equation}\label{Leff:var:P}
\eff{\toq{L}}\,\macr{\tod{\eps}}:\macr{\tod{\eps}}=\moy{\tod{\sig}(\tou{x})}\dc\moy{\tod{\eps}(\tou{x})}.
\end{equation}

\subsubsection{Connection with minimum energy principles}

\paragraph{Minimum energy principles for admissible fields.}

As the two-field variational principle (\ref{cost:fct:P}--\ref{var:princip:P}) has been motivated by geometric considerations, we would like to explore its connections with the conventional minimum energy principles in the case of linear constituents. The relationships between the strain-based principles have been explored in Proposition \ref{prop:equiv:J:N}, making use of the properties of the energy functional $\mathcal{J}$ in \eqref{cost:fct:J}. To do so for the two-field variational principle, we introduce the principle that is dual to \eqref{princip0}, i.e., the following stress-based minimum energy principle under controlled overall strain:
\begin{equation}\label{princip0:bis}
\tod{\sig}=\operatorname{arg}\min_{ \tod{s} \in \mathscrbf{S} }\mathcal{J}_c(\tod{s}) \quad \text{with} \quad \mathcal{J}_c(\tod{s}) = \frac{1}{2} \moy{\tod{s}(\tou{x}):\toq{L}^{-1}(\tou{x})\tod{s}(\tou{x})} - \moy{\tod{s}(\tou{x})}\dc \macr{\tod{\eps}}.
\end{equation}
As in Section \ref{sec:connect:en}, the gradient of $\mathcal{J}_c$ at $\tod{\sig}\in\mathscrbf{S}$ is defined as the element of $\mathscrbf{S}$ that satisfies
\[
\pss{\tou{\nabla}\mathcal{J}_c(\tod{s})}{\tilde{\tod{s}}}=\pss{ \toq{L}_0\big(\toq{L}^{-1}\tod{s}-\macr{\tod{\eps}}\big) }{ \tilde{\tod{s}} } \qquad \forall \tilde{\tod{s}} \in \mathscrbf{S}
\]
Therefore $\big( \tou{\nabla}\mathcal{J}_c(\tod{s}) - \toq{L}_0\big(\toq{L}^{-1}\tod{s}-\macr{\tod{\eps}}\big) \big)\in\mathscrbf{S}^\bot$ which, making use of the projector onto $\mathscrbf{S}$, yields:
\[
\tou{\nabla}\mathcal{J}_c(\tod{s}) = \toq{L}_0\big(\moy{\toq{L}^{-1}\tod{s}}-\macr{\tod{\eps}}\big) + \toq{\Delta}_0\toq{L}^{-1}\tod{s}.
\]
Proposition \ref{gen:decomp} can be used to obtain an equivalent but more convenient form as
\begin{equation}\label{grad:Jc}
\tou{\nabla}\mathcal{J}_c(\tod{s}) =  \toq{L}_0\big( \toq{I} - \PEz \big)\toq{L}^{-1}\tod{s} -  \toq{L}_0\macr{\tod{\eps}}.
\end{equation}
Reminding that, according to the Riesz mapping, one has for all $(\tod{\tau},\tod{\eta})\in\mathscrbf{H}_s\times\mathscrbf{H}_e$:
\begin{equation}\label{id:Riesz}
\| \big(\toq{I}-\PEz\big)\tod{\eta}-\macr{\tod{\eps}} \|_{\mathscrbf{H}_e}=\| \toq{L}_0\big(\toq{I}-\PEz\big)\tod{\eta}-\toq{L}_0\macr{\tod{\eps}} \|_{\mathscrbf{H}_s} \qquad \text{and} \qquad \| \PSo \tod{\tau} \|_{\mathscrbf{H}_s}=\| \toq{\Gamma}_0 \tod{\tau} \|_{\mathscrbf{H}_e},
\end{equation}
then combining the definitions \eqref{cost:fct:P} and \eqref{def:err:fct} with \eqref{grad:J}, \eqref{grad:Jc} and \eqref{id:Riesz} yields the result below that establishes a link between the geometric functional $\mathcal{P}$ and the energetic ones $\mathcal{J}$ and $\mathcal{J}_c$ for tensor fields that are admissible.

\begin{proposition}\label{equiv:P:J:Jc}
Considering a kinematically admissible strain field $\tod{e}\in\mathscrbf{E}$ such that $\moy{\tod{e}}=\macr{\tod{\eps}}$ and a statically admissible stress field $\tod{s}\in\mathscrbf{S}$, then along the following four ``trajectories'' it holds:
\[\begin{gathered}
 \mathcal{P}(\tod{s},\tod{e})=\moy{r(\tou{x},\tod{s},\tod{e})}, \qquad \mathcal{P}(\toq{L}\tod{e},\tod{e})=\frac{1}{2} \| \tou{\nabla}\mathcal{J} (\tod{e} - \macr{\tod{\eps}}) \|^2_{\mathscrbf{H}_e}, \qquad  \mathcal{P}(\tod{s},\toq{L}^{-1}\tod{s})=\frac{1}{2} \| \tou{\nabla}\mathcal{J}_c (\tod{s} ) \|^2_{\mathscrbf{H}_s},\\
\mathcal{P}(\toq{L}\tod{e},\toq{L}^{-1}\tod{s})=\frac{1}{2} \| \tou{\nabla}\mathcal{J}_c (\tod{s} ) \|^2_{\mathscrbf{H}_s} + \moy{r(\tou{x},\tod{s},\tod{e})} + \frac{1}{2} \| \tou{\nabla}\mathcal{J} (\tod{e} - \macr{\tod{\eps}}) \|^2_{\mathscrbf{H}_e}.
\end{gathered}\]
\end{proposition}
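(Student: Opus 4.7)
The plan is to verify the four identities by direct case-by-case evaluation of the three additive components of $\mathcal{P}$ under the admissibility assumptions, and then to re-express the surviving terms using the Riesz identities \eqref{id:Riesz} together with the gradient formulas \eqref{grad:J} and \eqref{grad:Jc}.

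First I would record two reductions coming from the admissibility hypotheses that will be reused throughout. When $\tod{e}\in\mathscrbf{E}$ with $\moy{\tod{e}}=\macr{\tod{\eps}}$, the decomposition \eqref{prop6} gives $\PEo\tod{e}=\tod{0}$ whence $(\toq{I}-\PEz)\tod{e}=\moy{\tod{e}}=\macr{\tod{\eps}}$, so $\Updelta{\textrm{Compat}}(\tod{e})=0$. When $\tod{s}\in\mathscrbf{S}$, $\PSo\tod{s}=\tod{0}$ so $\Updelta{\textrm{Equil}}(\tod{s})=0$. Using both reductions together with \eqref{def:err:fct} immediately yields the first identity $\mathcal{P}(\tod{s},\tod{e})=\moy{r(\tou{x},\tod{s},\tod{e})}$.

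Next I would handle the middle two identities by duality. For $\mathcal{P}(\toq{L}\tod{e},\tod{e})$, the choice $\tod{\tau}=\toq{L}\tod{e}$ makes the integrand \eqref{ecr:lin} vanish pointwise, so only the equilibrium term remains; applying $\|\PSo\tod{\tau}\|_{\mathscrbf{H}_s}=\|\toq{\Gamma}_0\tod{\tau}\|_{\mathscrbf{H}_e}$ from \eqref{id:Riesz} to $\tod{\tau}=\toq{L}\tod{e}$ and recognizing $\toq{\Gamma}_0\toq{L}\tod{e}=\tou{\nabla}\mathcal{J}(\tod{e}-\macr{\tod{\eps}})$ via \eqref{grad:J} (which is legitimate since $\tod{e}-\macr{\tod{\eps}}\in\mathscrbf{E}_0$) gives the announced expression. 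Dually, for $\mathcal{P}(\tod{s},\toq{L}^{-1}\tod{s})$, the constitutive error again vanishes and so does the equilibrium term, leaving only $\Updelta{\textrm{Compat}}(\toq{L}^{-1}\tod{s})$; the first identity in \eqref{id:Riesz} applied to $\tod{\eta}=\toq{L}^{-1}\tod{s}$ combined with the expression \eqref{grad:Jc} of $\tou{\nabla}\mathcal{J}_c(\tod{s})$ produces the claim.

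Finally, for the fourth identity no cancellation occurs and I would simply add the three terms. The equilibrium and compatibility contributions reproduce the right-hand sides of the middle two identities verbatim. For the constitutive term, the pointwise algebraic check $r(\tou{x},\toq{L}\tod{e},\toq{L}^{-1}\tod{s})=r(\tou{x},\tod{s},\tod{e})$ reduces it to $\moy{r(\tou{x},\tod{s},\tod{e})}$: the two quadratic terms in \eqref{ecr:lin:0} collapse via $\toq{L}\toq{L}^{-1}=\toq{I}$, while the cross term yields $(\toq{L}\tod{e})\dc(\toq{L}^{-1}\tod{s})=\tod{s}\dc\tod{e}$ thanks to the major symmetry of $\toq{L}(\tou{x})$. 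The whole argument is essentially bookkeeping, and I expect this last symmetry of $r$ to be the only mildly non-obvious step.
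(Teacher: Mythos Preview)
Your proof is correct and follows essentially the same approach as the paper, which simply states that the result is obtained by ``combining the definitions \eqref{cost:fct:P} and \eqref{def:err:fct} with \eqref{grad:J}, \eqref{grad:Jc} and \eqref{id:Riesz}.'' Your write-up is in fact more detailed than the paper's, and your observation that $r(\tou{x},\toq{L}\tod{e},\toq{L}^{-1}\tod{s})=r(\tou{x},\tod{s},\tod{e})$ via the major symmetry of $\toq{L}$ is exactly the right way to handle the constitutive term in the fourth identity.
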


\paragraph{Comparison of minimization principles.} Proposition \ref{equiv:P:J:Jc} sheds light on the relationships existing between the energy functionals and $\mathcal{P}$ when the latter is employed with an admissible strain and/or an admissible stress field while assuming that the constitutive relations might be satisfied too. As the proposed geometric variational principle \eqref{var:princip:P} does not actually use any such constraint, we explore now its relationships with the minimum energy principles \eqref{princip0} and \eqref{princip0:bis} in general.

For short-hand notations, given $(\tod{\tau},\tod{\eta})\in \mathscrbf{H}_s\times\mathscrbf{H}_e$ and making use of Proposition~\ref{gen:decomp} we write:
\begin{equation}\label{decomp:tau:eta}
\left\{\begin{aligned}
& \tod{\tau} = \cS{\tod{\tau}} +  \cSo{\tod{\tau}} \quad \text{with} \quad \cS{\tod{\tau}} = \moy{\tod{\tau}}  + \PSz \tod{\tau}  \quad \text{and} \quad  \cSo{\tod{\tau}}=\PSo\tod{\tau}, \\
& \tod{\eta} = \cEz{\tod{\eta}} +  \cEzo{\tod{\eta}} \quad \text{with} \quad \cEz{\tod{\eta}} =  \PEz \tod{\eta}  \quad \text{and} \quad  \cEzo{\tod{\eta}}=\moy{\tod{\eta}} + \PEo\tod{\eta},
\end{aligned}\right.
\end{equation}
with the orthogonality in $\mathscrbf{H}_s$ and in $\mathscrbf{H}_e$ being understood in the sense of the corresponding energetic scalar products. Moreover, for the purpose of establishing the connection between the geometric and the energetic variational principles, let us recognize that the positive definite tensor $\toq{L}$ defines some energetic scalar products on the spaces of $\mathscrbf{L}^2_\text{per}\!\left(\mathcal{V}\right)$ \emph{strain} and \emph{stress} tensor fields, that we denote respectively as:
\begin{equation}
\pseL{\tod{\eta}}{\tod{\eta}}= \moy{\toq{L}(\tou{x})\tod{\eta}(\tou{x})\dc\tod{\eta}(\tou{x})} \qquad \text{and} \qquad \pssL{\tod{\tau}}{\tod{\tau}}= \moy{\tod{\tau}(\tou{x})\dc\toq{L}^{-1}(\tou{x})\tod{\tau}(\tou{x})} \label{scalarL}.
\end{equation}
In this context, the energetic scalar products \eqref{scalare} and \eqref{scalars} are temporarily denoted as 
\begin{equation}\label{scalarL0}
\pseLz{\tod{\eta}}{\tod{\eta}} \equiv \pse{\tod{\eta}}{\tod{\eta}} \qquad \text{and} \qquad \pssLz{\tod{\tau}}{\tod{\tau}} \equiv \pss{\tod{\tau}}{\tod{\tau}}
\end{equation}
to emphasize their dependence on the reference tensor $\toq{L}_0$ for the clarity of the exposition to come.

A tensor field in $\mathscrbf{H}_e$ can be seen as a strain field only if it belongs to the subspace $\mathscrbf{E}$, since it thus satisfies the kinematic admissibility condition. In order to extend the energetic functional $\mathcal{J}$ in \eqref{cost:fct:J} to any tensor field in $\mathscrbf{H}_e$ we evaluate the energy of its admissible component $\cEz{\tod{\eta}}\in\mathscrbf{E}_0$, based on \eqref{decomp:tau:eta}, by
\[
\mathcal{J}(\cEz{\tod{\eta}}) = \frac{1}{2} \pdse{\toq{L}(\macr{\tod{\eps}} + \cEz{\tod{\eta}})}{\macr{\tod{\eps}} + \cEz{\tod{\eta}}} = \frac{1}{2} \| \macr{\tod{\eps}} + \cEz{\tod{\eta}} \|^2_{\mathscrbf{H}_e,\toq{L}}
\]
where we make use of the norm associated with the scalar product \eqref{scalarL} defined by $\toq{L}$. Likewise, for a generic field $\tod{\tau}\in \mathscrbf{H}_s$, the complementary stress-based mechanical energy under the controlled overall strain $\macr{\tod{\eps}}$ is defined in terms of its statically admissible components $\cS{\tod{\tau}}\in\mathscrbf{S}$ as 
\[
\mathcal{J}_c(\cS{\tod{\tau}}) = \frac{1}{2} \pdse{\cS{\tod{\tau}}}{\toq{L}^{-1}\cS{\tod{\tau}}} - \moy{\cS{\tod{\tau}}}\dc \macr{\tod{\eps}} = \frac{1}{2} \| \cS{\tod{\tau}} \|^2_{\mathscrbf{H}_s,\toq{L}} - \moy{\cS{\tod{\tau}}}\dc \macr{\tod{\eps}},
\]
using \eqref{decomp:tau:eta} and the norm associated with \eqref{scalarL}. In this context, by a direct application of Lemma \ref{Hill:orth}, the evaluation of the averaged value of the error in constitutive relations \eqref{ecr:lin} for the admissible fields $\cS{\tod{\tau}}$ and $(\macr{\tod{\eps}}+\cEz{\tod{\eta}})$ yields
\[
\moy{r(\tou{x},\cS{\tod{\tau}},\macr{\tod{\eps}}+\cEz{\tod{\eta}})} = \frac{1}{2} \|  \cS{\tod{\tau}} - \toq{L}(\macr{\tod{\eps}} + \cEz{\tod{\eta}}) \|^2_{\mathscrbf{H}_s,\toq{L}}  = \frac{1}{2} \| \cS{\tod{\tau}} \|^2_{\mathscrbf{H}_s,\toq{L}} - \moy{\cS{\tod{\tau}}}\dc \macr{\tod{\eps}} + \frac{1}{2} \| \macr{\tod{\eps}} + \cEz{\tod{\eta}} \|^2_{\mathscrbf{H}_e,\toq{L}}.
\]
This shows that, for any admissible strain and stress fields $\cS{\tod{\tau}}\in\mathscrbf{S}$ and $\cEz{\tod{\eta}}\in\mathscrbf{E}_0$, the mean error in constitutive relations, which is positive, is equal to the associated total mechanical energy, i.e., 
\begin{equation}\label{tot:en:compat}
\moy{r(\tou{x},\cS{\tod{\tau}},\macr{\tod{\eps}}+\cEz{\tod{\eta}})} = \mathcal{J}_c(\cS{\tod{\tau}}) + \mathcal{J}(\cEz{\tod{\eta}}) \geq 0.
\end{equation}

Now, our aim is to show that, for any fields $(\tod{\tau},\tod{\eta})\in \mathscrbf{H}_s\times\mathscrbf{H}_e$, the geometric functional $\mathcal{P}(\tod{\tau},\tod{\eta})$ defined by \eqref{cost:fct:P} provides an upper bound on the total energy \eqref{tot:en:compat} of their admissible components $(\cS{\tod{\tau}},\cEz{\tod{\eta}})\in\mathscrbf{S}\times\mathscrbf{E}_0$. To do so, let us first rewrite \eqref{tot:en:compat} as
\[\begin{aligned}
\moy{r(\tou{x},\cS{\tod{\tau}},\macr{\tod{\eps}}+\cEz{\tod{\eta}})} & = \moy{r\big(\tou{x},\tod{\tau}-\cSo{\tod{\tau}},\tod{\eta}-(\cEzo{\tod{\eta}}-\macr{\tod{\eps}})\big)} \\
& = \frac{1}{2} \|   \big(\tod{\tau} - \toq{L}\tod{\eta}\big) -  \big(\cSo{\tod{\tau}} - \toq{L}(\cEzo{\tod{\eta}}-\macr{\tod{\eps}}) \big)    \|^2_{\mathscrbf{H}_s,\toq{L}},
\end{aligned}\]
so that
\begin{multline}\label{ecr:int}
\moy{r(\tou{x},\cS{\tod{\tau}},\macr{\tod{\eps}}+\cEz{\tod{\eta}})}  = \frac{1}{2} \| \tod{\tau} - \toq{L}\tod{\eta}  \|^2_{\mathscrbf{H}_s,\toq{L}} + \frac{1}{2} \|  \cSo{\tod{\tau}} - \toq{L}(\cEzo{\tod{\eta}}-\macr{\tod{\eps}})  \|^2_{\mathscrbf{H}_s,\toq{L}}  \\
- \pssL{ \tod{\tau} - \toq{L}\tod{\eta} }{ \cSo{\tod{\tau}} - \toq{L}(\cEzo{\tod{\eta}}-\macr{\tod{\eps}}) }.
\end{multline}
By Lemma \ref{Hill:orth} and Definition \eqref{scalarL} one has
\begin{equation}\label{ecr:int:1}
\|  \cSo{\tod{\tau}} - \toq{L}(\cEzo{\tod{\eta}}-\macr{\tod{\eps}})  \|^2_{\mathscrbf{H}_s,\toq{L}} =  \|  \cSo{\tod{\tau}}  \|^2_{\mathscrbf{H}_s,\toq{L}} + \|  \cEzo{\tod{\eta}}-\macr{\tod{\eps}}  \|^2_{\mathscrbf{H}_e,\toq{L}}.
\end{equation}
Moreover, the triangular inequality and the definition of the error in constitutive relations entail
\begin{equation}\label{ecr:int:2}\begin{aligned}
\big| \pssL{ \tod{\tau} - \toq{L}\tod{\eta} }{ \cSo{\tod{\tau}} - \toq{L}(\cEzo{\tod{\eta}}-\macr{\tod{\eps}}) } \big| & \leq \left( 2 \moy{r(\tou{x},\tod{\tau},\tod{\eta})}  \right)^{1/2} \left(  \|  \cSo{\tod{\tau}}  \|^2_{\mathscrbf{H}_s,\toq{L}} + \|  \cEzo{\tod{\eta}}-\macr{\tod{\eps}}  \|^2_{\mathscrbf{H}_e,\toq{L}}  \right)^{1/2} \\
& \leq \moy{r(\tou{x},\tod{\tau},\tod{\eta})}  + \frac{1}{2} \|  \cSo{\tod{\tau}}  \|^2_{\mathscrbf{H}_s,\toq{L}} + \frac{1}{2} \|  \cEzo{\tod{\eta}}-\macr{\tod{\eps}}  \|^2_{\mathscrbf{H}_e,\toq{L}} .
\end{aligned}\end{equation}
Therefore, using \eqref{ecr:int:1} and \eqref{ecr:int:2} in \eqref{ecr:int} leads to the following inequality:
\begin{equation}\label{bd:ecr:0}
\moy{r(\tou{x},\cS{\tod{\tau}},\macr{\tod{\eps}}+\cEz{\tod{\eta}})} \leq 2 \moy{r(\tou{x},\tod{\tau},\tod{\eta})} + \|  \cSo{\tod{\tau}}  \|^2_{\mathscrbf{H}_s,\toq{L}} + \|  \cEzo{\tod{\eta}}-\macr{\tod{\eps}}  \|^2_{\mathscrbf{H}_e,\toq{L}}.
\end{equation}
Now, in the definition of the geometric functional \eqref{cost:fct:P} with \eqref{def:err:fct} one can recognize that
\[
\| \PSo \tod{\tau} \|^2_{\mathscrbf{H}_s} = \|  \cSo{\tod{\tau}} \|^2_{\mathscrbf{H}_s,\toq{L}_0}   \qquad \text{and} \qquad \| \big(\toq{I}-\PEz\big)\tod{\eta}-\macr{\tod{\eps}} \|^2_{\mathscrbf{H}_e} = \| \cEzo{\tod{\eta}}-\macr{\tod{\eps}} \|^2_{\mathscrbf{H}_e,\toq{L}_0}, 
\]
using the notation \eqref{scalarL0} to highlight that the norms are associated with the scalar products defined by the reference tensor $\toq{L}_0$. Moreover, as the tensor $\toq{L}$ is such that its components are real-valued functions in $L^\infty\!\left(\mathcal{V} \right)$, the norms defined by $\toq{L}$ and $\toq{L}_0$ are equivalent. Therefore, there exists a constant $C>0$ that depends only on $\toq{L}$ and $\toq{L}_0$, such that
\[
\|  \cSo{\tod{\tau}}  \|^2_{\mathscrbf{H}_s,\toq{L}} + \|  \cEzo{\tod{\eta}}-\macr{\tod{\eps}}  \|^2_{\mathscrbf{H}_e,\toq{L}} \leq C\left( \|  \cSo{\tod{\tau}}  \|^2_{\mathscrbf{H}_s,\toq{L}_0} + \|  \cEzo{\tod{\eta}}-\macr{\tod{\eps}}  \|^2_{\mathscrbf{H}_e,\toq{L}_0}  \right).
\]
Using this inequality in \eqref{bd:ecr:0}, together with \eqref{tot:en:compat} and the definitions (\ref{cost:fct:P}--\ref{def:err:fct}) of the geometric cost functional yields the following result.

\begin{proposition}\label{prop:equiv:P:JJc}
There exists $c>0$, which depends only on $\toq{L}$ and $\toq{L}_0$, such that for all $(\tod{\tau},\tod{\eta})\in \mathscrbf{H}_s\times\mathscrbf{H}_e$:
\[
0 \leq \mathcal{J}_c(\cS{\tod{\tau}}) + \mathcal{J}(\cEz{\tod{\eta}}) \leq c\, \mathcal{P}(\tod{\tau},\tod{\eta}),
\] 
where $(\cS{\tod{\tau}},\cEz{\tod{\eta}})\in\mathscrbf{S}\times\mathscrbf{E}_0$ are the statically and kinematically admissible components of $(\tod{\tau},\tod{\eta})$.
\end{proposition}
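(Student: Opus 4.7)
The plan is to assemble the bound from three ingredients that are essentially all in place just before the proposition. The lower bound $0 \leq \mathcal{J}_c(\cS{\tod{\tau}}) + \mathcal{J}(\cEz{\tod{\eta}})$ is immediate from the identity \eqref{tot:en:compat}, since that identity expresses this sum as the mean error in constitutive relations $\moy{r(\tou{x},\cS{\tod{\tau}},\macr{\tod{\eps}}+\cEz{\tod{\eta}})}$, which is pointwise non-negative by the very definition \eqref{ecr:lin} of $r$.

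For the upper bound, I would start from the same identity \eqref{tot:en:compat} to rewrite $\mathcal{J}_c(\cS{\tod{\tau}}) + \mathcal{J}(\cEz{\tod{\eta}})$ as the mean error in constitutive relations for the admissible components, and then invoke the central inequality \eqref{bd:ecr:0} derived in the discussion just above:
\[
\moy{r(\tou{x},\cS{\tod{\tau}},\macr{\tod{\eps}}+\cEz{\tod{\eta}})} \leq 2 \moy{r(\tou{x},\tod{\tau},\tod{\eta})} + \|\cSo{\tod{\tau}}\|^2_{\mathscrbf{H}_s,\toq{L}} + \|\cEzo{\tod{\eta}}-\macr{\tod{\eps}}\|^2_{\mathscrbf{H}_e,\toq{L}}.
\]
At this stage the right-hand side involves the $\toq{L}$-norms \eqref{scalarL} while the geometric functional $\mathcal{P}$ in \eqref{cost:fct:P}--\eqref{def:err:fct} is defined via the $\toq{L}_0$-norms \eqref{scalarL0}.

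The step that bridges these two metrics is the norm equivalence pointed out in the paragraph preceding the proposition: since the components of $\toq{L}(\tou{x})$ are real-valued $L^\infty$ functions and both $\toq{L}$ and $\toq{L}_0$ are positive definite with major and minor symmetries, there exists a constant $C>0$ depending only on $\toq{L}$ and $\toq{L}_0$ (essentially the ratio of the extreme eigenvalues of $\toq{L}_0^{-1/2}\toq{L}\toq{L}_0^{-1/2}$ and its inverse) such that $\|\cdot\|_{\mathscrbf{H}_s,\toq{L}} \leq C^{1/2}\|\cdot\|_{\mathscrbf{H}_s,\toq{L}_0}$ and likewise for the strain norms. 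Applying this equivalence to both squared norms on the right-hand side converts them into the corresponding $\toq{L}_0$-norms that enter $\Updelta{\textrm{Equil}}(\tod{\tau})$ and $\Updelta{\textrm{Compat}}(\tod{\eta})$, up to the factor $C$.

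Finally, I would identify $\moy{r(\tou{x},\tod{\tau},\tod{\eta})}$ with $\Updelta{\textrm{Const}}(\tod{\tau},\tod{\eta})$, and $\tfrac{1}{2}\|\cSo{\tod{\tau}}\|^2_{\mathscrbf{H}_s,\toq{L}_0}$ and $\tfrac{1}{2}\|\cEzo{\tod{\eta}}-\macr{\tod{\eps}}\|^2_{\mathscrbf{H}_e,\toq{L}_0}$ with $\Updelta{\textrm{Equil}}(\tod{\tau})$ and $\Updelta{\textrm{Compat}}(\tod{\eta})$ respectively, which yields the target inequality with $c = 2\max(1,C)$. There is essentially no analytical obstacle here since the key estimate \eqref{bd:ecr:0} has already absorbed the hardest step (a triangle inequality combined with Hill's orthogonality to decouple the $\mathscrbf{S}^\bot$ and $\mathscrbf{E}_0^\bot$ contributions); the only care required is bookkeeping of the constants and of the transition between the two energetic metrics, which would typically be recorded as the single line-by-line consolidation of the estimates above.
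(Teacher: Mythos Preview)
Your proposal is correct and follows exactly the route taken in the paper: the lower bound via the non-negativity in \eqref{tot:en:compat}, the upper bound by combining \eqref{tot:en:compat} with the key estimate \eqref{bd:ecr:0}, the norm-equivalence passage from the $\toq{L}$-metrics to the $\toq{L}_0$-metrics, and the identification of the resulting terms with $\Updelta{\textrm{Const}}$, $\Updelta{\textrm{Equil}}$ and $\Updelta{\textrm{Compat}}$. Your explicit constant $c=2\max(1,C)$ is consistent with the paper, which leaves $c$ unspecified.
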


This proposition implies that the overall minimization, with respect to $(\tod{\tau},\tod{\eta})\in \mathscrbf{H}_s\times\mathscrbf{H}_e$, of the geometric functional $\mathcal{P}$ to its null stationary value results in the overall minimization of the total mechanical energy associated with these fields. Indeed, the pair $(\tod{\sig},\tod{\eps})$ solution to \eqref{local:fct:space} satisfies $\mathcal{J}(\tod{\eps}-\moy{\tod{\eps}})=-J_c(\tod{\sig})$ with $\moy{\tod{\eps}}=\macr{\tod{\eps}}$, which is equivalent to having $\mathcal{P}(\tod{\sig},\tod{\eps})=0$.\\

\subsubsection{Non-linear composites}

As it can be done for energetic variational principles, we show how to extend the geometric variational principle (\ref{cost:fct:P}--\ref{var:princip:P}) to the case of non-linear constituents. To do so, consider a local energy density $w:\mathcal{V}\times\mathbb{R}^{d\times d}_{\operatorname{sym}} \to\mathbb{R}$, with $\mathbb{R}^{d\times d}_{\operatorname{sym}}$ being the space of symmetric second-order tensors. The potential $w$ is assumed to be convex in $\mathbb{R}^{d\times d}_{\operatorname{sym}}$ and weakly-coercive in the sense that for all $\tod{\eta},\tilde{\tod{\eta}} \in\mathbb{R}^{d\times d}_{\operatorname{sym}}$
\begin{equation}\label{def:weak:coer}
\text{if }\big(\partial_{\tod{\eta}} w(\tou{x},\tod{\eta}) - \partial_{\tod{\eta}} w(\tou{x},\tilde{\tod{\eta}}) \big):\big(\tod{\eta}-\tilde{\tod{\eta}}\big)=0 \text{ then } \tod{\eta}=\tilde{\tod{\eta}}.
\end{equation}
Its dual $w^*:\mathcal{V}\times\mathbb{R}^{d\times d}_{\operatorname{sym}}\to\mathbb{R}$, which is convex in $\mathbb{R}^{d\times d}_{\operatorname{sym}}$, is defined locally according to the classical Legendre-Fenchel transform: 
\begin{equation}\label{Fenchel}
w^*(\tou{x},\tod{\tau}) = \max_{\tod{\eta}\in\mathbb{R}^{d\times d}_{\operatorname{sym}}}\big\{ \tod{\tau}:\tod{\eta} - w(\tou{x},\tod{\eta})  \big\} \qquad \text{a.e. in }\mathcal{V},
\end{equation}
for all $\tod{\tau}\in\mathbb{R}^{d\times d}_{\operatorname{sym}}$. Note that, unlike in the linear case, the generic energy densities $w$ and $w^*$ are not necessarily quadratic.

The local error in constitutive relations functional \eqref{ecr:lin} in its form \eqref{ecr:lin:0} can then be generalized to non-linear constituents as was done in earlier studies, see \cite{Ladeveze} and the references therein. To do so, one defines:
\begin{equation}\label{ecr:nlin:0}
r(\tou{x},\tod{\tau},\tod{\eta}) = w(\tou{x},\tod{\eta}) + w^*(\tou{x},\tod{\tau}) - \tod{\tau}(\tou{x}):\tod{\eta}(\tou{x}) \qquad \forall (\tod{\tau},\tod{\eta})\in\mathscrbf{H}_s\times\mathscrbf{H}_e.
\end{equation}
As in the linear case, by definition of the Legendre-Fenchel transform one has $r(\tou{x},\tod{\tau},\tod{\eta})\geq 0$ in $\mathcal{V}$, while $r(\tou{x},\tod{\tau},\tod{\eta}) = 0$ locally if and only if $\tod{\tau}(\tou{x})=\partial_{\tod{\eta}} w(\tou{x},\tod{\eta})$. In this context, one considers the geometric variational principle (\ref{cost:fct:P}--\ref{var:princip:P}) with the term $\Updelta{\textrm{Const}}(\tod{\tau},\tod{\eta})$ being now defined using \eqref{ecr:nlin:0}. 

The partial gradients of the cost functional \eqref{cost:fct:P} that makes use of \eqref{ecr:nlin:0} can then be computed. First, the gradient $\tou{\nabla}_{\!\tod{\tau}}\mathcal{P}(\tod{\tau},\tod{\eta})$ is defined as the element of $\mathscrbf{H}_s$ that satisfies
\[
\pss{\tou{\nabla}_{\!\tod{\tau}}\mathcal{P}(\tod{\tau},\tod{\eta})}{\tilde{\tod{\tau}}}  = \moy{ \tilde{\tod{\tau}} :  \partial_{\tod{\tau}} w^*(\cdot,\tod{\tau})} - \moy{\tilde{\tod{\tau}}:\tod{\eta}} + \pss{\PSo\tod{\tau}  }{\tilde{\tod{\tau}}  } \qquad \forall \tilde{\tod{\tau}}\in\mathscrbf{H}_s.
\]
As this identity can be rewritten as
\[
\pss{\tou{\nabla}_{\!\tod{\tau}}\mathcal{P}(\tod{\tau},\tod{\eta})}{\tilde{\tod{\tau}}}  = \pss{ \toq{L}_0\big(\partial_{\tod{\tau}} w^*(\cdot,\tod{\tau}) - \tod{\eta}\big)  +\PSo\tod{\tau}}{\tilde{\tod{\tau}}} \qquad \forall \tilde{\tod{\tau}}\in\mathscrbf{H}_s,
\]
it can be deduced that
\begin{equation}\label{prop:grad:P:nl:tau}
\tou{\nabla}_{\!\tod{\tau}}\mathcal{P}(\tod{\tau},\tod{\eta}) =  \toq{L}_0\big(\partial_{\tod{\tau}} w^*(\cdot,\tod{\tau}) - \tod{\eta}\big)  + \PSo\tod{\tau}.
\end{equation}
Likewise, the partial gradient $\tou{\nabla}_{\!\tod{\eta}}\mathcal{P}(\tod{\tau},\tod{\eta})$ is defined as the element of $\mathscrbf{H}_e$ satisfying
\[
\pse{\tou{\nabla}_{\!\tod{\eta}}\mathcal{P}(\tod{\tau},\tod{\eta})}{\tilde{\tod{\eta}}} = \moy{\partial_{\tod{\eta}} w(\cdot,\tod{\eta}) : \tilde{\tod{\eta}}}  - \moy{ \tod{\tau}:\tilde{\tod{\eta}} } + \pse{ \big(\toq{I}-\PEz\big)\tod{\eta} - \macr{\tod{\eps}}  }{\tilde{\tod{\eta}}} \qquad \forall \tilde{\tod{\eta}}\in\mathscrbf{H}_e,
\]
from which one obtains finally:
\begin{equation}\label{prop:grad:P:nl:eta}
\tou{\nabla}_{\!\tod{\eta}}\mathcal{P}(\tod{\tau},\tod{\eta}) =  \toq{L}_0^{-1}\big(\partial_{\tod{\eta}} w(\cdot,\tod{\eta})  -  \tod{\tau} \big) + \big(\toq{I}-\PEz\big)\tod{\eta} - \macr{\tod{\eps}} .
\end{equation}

 The knowledge of the gradients \eqref{prop:grad:P:nl:tau} and \eqref{prop:grad:P:nl:eta} allows the implementation of gradient-based minimization algorithms. Moreover, as in the linear case, consider the associated optimality conditions and let $\tod{\eps}$, $\tod{\sig}$ denote the fields that satisfy $\tou{\nabla}_{\!\tod{\tau}}\mathcal{P}(\tod{\sig},\tod{\eps}) = \tod{0}$ and $\tou{\nabla}_{\!\tod{\eta}}\mathcal{P}(\tod{\sig},\tod{\eps}) = \tod{0}$. Based on the same arguments as those used in Section \ref{sec:min:proj:two:field}, these equations imply that there exist $\tod{e}^*\in\mathscrbf{E}_0$ and $\tod{s}\in\mathscrbf{S}$ such that
\begin{equation}\label{opt:cond:int1:nl}
 \partial_{\tod{\tau}} w^*(\cdot,\tod{\sig}) - \tod{\eps}=\tod{e}^* \qquad \text{and} \qquad \partial_{\tod{\eta}} w(\cdot,\tod{\eps})  -  \tod{\sig} = \tod{s}.
\end{equation}
 Inverting the relation $\partial_{\tod{\tau}} w^*(\cdot,\tod{\sig}) = \tod{\eps} + \tod{e}^*$ with the help of the Legendre-Fenchel transform \eqref{Fenchel} leads to $\tod{\sig}= \partial_{\tod{\eta}} w(\cdot,\tod{\eps} + \tod{e}^*)$, which inserted into the second equation in \eqref{opt:cond:int1:nl} implies
 \[
\partial_{\tod{\eta}} w(\cdot,\tod{\eps} + \tod{e}^*) - \partial_{\tod{\eta}} w(\cdot,\tod{\eps}) = - \tod{s}.
 \] 
On the one hand, applying the duality product with $\tod{e}^*$ to the above equation and using Lemma \ref{Hill:orth} entail
\begin{equation}\label{proof:nl:int1}
\pdse{ \partial_{\tod{\eta}} w(\cdot,\tod{\eps} + \tod{e}^*)  - \partial_{\tod{\eta}} w(\cdot,\tod{\eps}) }{\tod{e}^*}=-\pdse{\tod{s}}{\tod{e}^*}=0.
\end{equation}
On the other hand, owing to the convexity of $w$ in $\mathscrbf{H}_e$ it holds
\begin{equation}\label{proof:nl:int2}
 \big(  \partial_{\tod{\eta}} w(\tou{x},\tod{\eps} + \tod{e}^*) - \partial_{\tod{\eta}} w(\tou{x},\tod{\eps}) \big) : \tod{e}^*(\tou{x}) \geq 0 \qquad \text{a.e. in }\mathcal{V}.
\end{equation}
Therefore, \eqref{proof:nl:int1} and \eqref{proof:nl:int2} imply that
\[
 \big(  \partial_{\tod{\eta}} w(\tou{x},\tod{\eps} + \tod{e}^*) - \partial_{\tod{\eta}} w(\tou{x},\tod{\eps}) \big) : \tod{e}^*(\tou{x}) = 0 \qquad \text{a.e. in }\mathcal{V},
\]
which finally yields $\tod{e}^*(\tou{x})=\tod{0}$ as a consequence of the assumption \eqref{def:weak:coer}. In turn, this implies that $\tod{\sig}= \partial_{\tod{\eta}} w(\cdot,\tod{\eps})$ and thus, from \eqref{prop:grad:P:nl:tau} and \eqref{prop:grad:P:nl:eta}, one gets $\PSo\tod{\sig}=\tod{0}$ together with $(\toq{I}-\PEz)\tod{\eps} = \macr{\tod{\eps}}$. Therefore, the optimality conditions on $\mathcal{P}$ yield the equations:
\[
(\toq{I}-\toq{\Gamma}_0\toq{L}_0)\tod{\eps}=\macr{\tod{\eps}} , \qquad \tod{\sig}(\tou{x})= \partial_{\tod{\eta}} w(\tou{x},\tod{\eps}) \text{ in }\mathcal{V} \qquad \text{and} \qquad \toq{\Gamma}_0\tod{\sig}=\tod{0},
\]
which are equivalent to the original elasticity problem \eqref{local:fct:space} transposed to non-linear composites.

\section{Numerical implementation}\label{sec:num}

\subsection{Iterative minimization schemes}\label{sec:iter:schemes}

\subsubsection{Gradient-based algorithms}

Throughout this article, a number of variational principles have been investigated, based on energetic or geometric considerations. In this context, the aim of this section is to discuss the numerical implementation of some iterative minimization schemes. As the formulations considered involve either one-field or two-field cost functionals (see (\ref{cost:fct:J},\,\ref{cost:fct:N}) and \eqref{cost:fct:P} respectively), we consider in a generic setting the following variational problem
\begin{equation}\label{var:gen}
\tod{\chi} = \operatorname{arg}\underset{ \tilde{\tod{\chi}} \in \mathscrbf{H} }{\operatorname{min}}   \    \mathcal{T}(\tilde{\tod{\chi}}) \quad \text{with } \mathcal{T}:\,\tilde{\tod{\chi}}\in \mathscrbf{H}\mapsto \mathcal{T}(\tilde{\tod{\chi}})\in\mathbb{R},
\end{equation}
where the cost functional $\mathcal{T}$ is defined in a Hilbert space $\mathscrbf{H}$ that is either $\mathscrbf{H}_e$, $\mathscrbf{H}_s$ or $\mathscrbf{H}_s\times\mathscrbf{H}_e$, or a subset thereof, equipped with the corresponding energetic scalar product \eqref{scalare}, \eqref{scalars} or their cross product. The search for a minimizer of the variational problem \eqref{var:gen} can be performed by several descent algorithms. While our aim is not to optimize or discuss extensively the performances of such algorithms, reference can be made to \cite{Zeman,Brisard,Gelebart,Kabel} for conjugate-gradient based implementations and to \cite{Schneider} for a fast gradient method for computational homogeneization. Rather, we focus here on simple implementations and discuss their properties.\\

A first class of descent methods is obtained by choosing the gradient as the direction of descent:
\[
\tod{\chi}_{n+1} =  \tod{\chi}_{n} + \rho_{n}\, \tod{p}_{n} \qquad \text{with} \qquad \tod{p}_{n}= \tou{\nabla}\mathcal{T}(\tod{\chi}_{n}).
\]
One of the simplest descent algorithms along the gradient of $\mathcal{T}$ is obtained with a fixed step $\rho_n=-1$.

\begin{remark}
It is insightful to revisit the original fixed-point algorithm of \cite{Moulinec94,Moulinec} in the context of gradient-based minimization schemes. Indeed, as noted in \cite{Kabel}, the basic scheme introduced in \cite{Moulinec} can be interpreted as a gradient descent method with fixed step for the energetic functional $\mathcal{J}$ in \eqref{cost:fct:J}.
\end{remark}

Another descent method is the gradient method with optimal step, which involves the following line search at each step:
\begin{equation}\label{line:search}
\rho_{n}=\operatorname{arg}\underset{\rho\in\mathbb{R}}{\operatorname{min}}\,\mathcal{T}(\tod{\chi}_{n} + \rho\, \tod{p}_n) \qquad \text{where} \qquad \tod{p}_n= \tou{\nabla}\mathcal{T}(\tod{\chi}_{n}),
\end{equation}
which ensures that $\mathcal{T}$ decreases monotonically over iterations (a property which is not guaranteed by gradient methods with fixed step). In the case of linear constituents, the generic minimization problem \eqref{var:gen} is quadratic so that the gradient of $\mathcal{T}$ in $\mathscrbf{H}$ takes the form
\begin{equation}\label{grad:lin:gen}
\tou{\nabla}\mathcal{T}(\tilde{\tod{\chi}})=\toq{T}\tilde{\tod{\chi}} - \tod{t},
\end{equation}
where $\toq{T}$ is a linear operator from $\mathscrbf{H}$ into itself and $\tod{t}\in\mathscrbf{H}$, which may both be expressed in terms of the reference tensor $\toq{L}_0$ and the associated Green's operators $\toq{\Gamma}_0$ and $\toq{\Delta}_0$. From \eqref{grad:lin:gen}, the optimal step $\rho_n$ can be found analytically at each iteration by writing that 
\[
\psh{\tou{\nabla}\mathcal{T}(\tod{\chi}_{n} + \rho_n\, \tod{p}_n)}{\tod{p}_n}=0, \qquad \text{i.e., } \rho_n=-\frac{\| \tod{p}_n \|^2_{\mathscrbf{H}}}{\psh{\toq{T}\tod{p}_n}{\tod{p}_n}}.
\]
Note that the above expression could be further simplified given the explicit form of the linear operator $\toq{T}$ for specific cost functionals.

Alternatively, conjugate-gradient methods can be used to solve the variational problem \eqref{var:gen}. For linear materials, the Lippmann-Schwinger integral equation corresponding to the optimality conditions on $\mathcal{T}$ amounts in the linear system $\toq{T}\tilde{\tod{\chi}} = \tod{t}$ with the notations of Equation \eqref{grad:lin:gen}. As in the variational framework considered $\toq{T}$ is a self-adjoint operator from $\mathscrbf{H}$ into itself for the chosen scalar product, then the conjugate-gradient method may be used to solve this system. It reads as follows:\\[2mm]
\emph{Initialization:} choose $\tod{\chi}_0$ and compute 
\[
\tod{r}_0 = \tod{t}- \toq{T}\tod{\chi}_0 = - \tou{\nabla}\mathcal{T}(\tod{\chi}_0),\qquad  \tod{p}_0= \tod{r}_0.
\]
\emph{Then:} do $n=0,1, \dots$ until convergence
\[\begin{aligned}
& 1. &&  \alpha_n=\frac{\| \tod{r}_n \|^2_{\mathscrbf{H}}}{\psh{\toq{T}\tod{p}_n}{\tod{p}_n}} \\[1mm]
& 2. &&  \tod{\chi}_{n+1} =  \tod{\chi}_{n} + \alpha_{n}\, \tod{p}_{n} \\[1mm]
& 3. &&  \tod{r}_{n+1}=  \tod{r}_n - \alpha_n\, \toq{T}\tod{p}_n  \\[1mm]
& 4. &&  \beta_n=\frac{\|\tod{r}_{n+1}\|^2_\mathscrbf{H}}{\|\tod{r}_{n}\|^2_\mathscrbf{H}} \\[1mm]
& 5. &&  \tod{p}_{n+1}=  \tod{r}_{n+1} + \beta_n\, \tod{p}_n
\end{aligned} 
\]
Although this scheme shares the same \emph{structure} with those used in \cite{Zeman,Brisard,Gelebart}, there are subtle but fundamental differences between these algorithms. These differences are associated with the use of the energetic scalar products \eqref{scalare} and \eqref{scalars} together with the associated norms.

For non-linear materials, alternative conjugate-gradient methods can be used. The corresponding schemes maintain the previous structure but with $\tod{r}_n$ being defined as the steepest descent direction at each step, i.e., $\tod{r}_n=- \tou{\nabla}\mathcal{T}(\tod{\chi}_n)$, the parameter $\alpha_n$ being found by line search as in \eqref{line:search} and the parameter $\beta_n$ being chosen according to known formulae, which include the ones by Fletcher-Reeves, Polak-Ribiere, Hestenes-Stiefel and Dai-Yuan.

\begin{remark}\label{rmk:L0}
The gradient-based minimization schemes described previously depend all on the reference tensor $\toq{L}_0$, through the choice of the scalar product endowing $\mathscrbf{H}$. When assessing the convergence rate of the corresponding algorithms, not all scalar products are equivalent. Therefore, and as is well-known, $\toq{L}_0$ can be chosen so as to optimize the convergence rate, see the discussion in \cite{Moulinec18}. An optimization that frees itself from information about the microstructure has been proposed in \cite{Moulinec}. A quite natural question is whether it is possible to improve on the corresponding convergence rate when information on the microstructure is actually available and used. This issue is discussed further in \cite{Moulinec18} but it is beyond the scope of the present paper to investigate this question.
\end{remark}

\subsubsection{Stopping criteria}

In the approach by minimization based on either one of the iterative methods described previously, the expression of the gradient of $\mathcal{T}$ provides a sensible stopping criterion at the iterate $n$ as
\begin{equation}
\| \tou{\nabla}\mathcal{T} (\tod{\chi}_n) \|_\mathscrbf{H}  \leq \delta, 
\label{stop}
\end{equation}
 where the tolerance $\delta$  has to be chosen. For illustration purposes, let consider the energetic cost functional $\mathcal{J}:\mathscrbf{E}_0\subset\mathscrbf{H}_e\to\mathbb{R}$ defined by \eqref{cost:fct:J}, for which the criterion \eqref{stop} can be recast as
\begin{equation}
 \| \tou{\nabla}\mathcal{J} (\tod{e}^*_n) \|_{\mathscrbf{H}_e} = \| \toq{\Gamma}_0\tod{s}_n \|_{\mathscrbf{H}_e}\leq \delta \qquad \text{with} \qquad \tod{s}_n=\toq{L}(\macr{\tod{\eps}}+\tod{e}^*_n).
\label{stop:J}
\end{equation}
On the one hand, this criterion resembles the one suggested by \cite{Monchiet} with two differences: (i) the Green's operator used in the stopping criterion of \cite{Monchiet} is slightly different from the actual operator $\toq{\Gamma}_0$ associated with $\toq{L}_0$, and (ii) \cite{Monchiet} makes use of the $L^2$-norm of the operator, whereas \eqref{stop:J} involves the norm on $\mathscrbf{H}_e$ that is induced by the energetic scalar product \eqref{scalare}. On the other hand, the criterion \eqref{stop:J} differs from the original stopping criterion of \cite{Moulinec94,Moulinec}, which is based on the divergence of the stress field $\tod{s}_n$, i.e.,
\begin{equation}
 \| \operatorname{div}\tod{s}_n\|_{L^2} \leq \delta',  
\label{stop:sig}
\end{equation}
 where the $L^2$-norm of the divergence is computed in Fourier space using Parseval's theorem. Note that in \eqref{stop:J} the parameter $\delta$ can be chosen arbitrarily small, while by contrast $\delta'$ in \eqref{stop:sig} cannot. Moreover, using propositions \ref{grad:N} and \ref{prop:equiv:J:N}, a stopping criterion based on the norm $ \| \tou{\nabla}\mathcal{N} (\tod{e}^*_n) \|_{\mathscrbf{H}_e}$ can also be used.\\
 
The stopping criterion \eqref{stop} implies terminating the iterative minimization scheme once the gradient of the cost functional is sufficiently small. The value of the cost functional $\mathcal{T}$ itself is not relevant in this regard. In fact, for the energetic variational principles such as \eqref{princip0}, the stationary values of the corresponding cost functionals are not known beforehand. These values can actually be expressed in terms of the effective tensor $\eff{\toq{L}}$ which is rather computed a posteriori. On the contrary, for the geometric variational principles introduced in Section \ref{sec:geo:var:princip}, the cost functionals considered are defined so that their stationary values are zero. This allows to use the following stopping criteria: 
\[
\mathcal{N}(\tod{e}^*_n) \leq \delta'' \qquad \text{or} \qquad  \mathcal{P}(\tod{\tau}_n,\tod{\eta}_n)\leq \delta''
\]
where $\delta''$ can be chosen arbitrarily small at the continuous level. In the discretized versions of the proposed algorithms, the corresponding criteria must be adapted. Lastly, if one intends to compare the algorithm performances for different choices of the reference tensor $\toq{L}_0$ then these criteria must be normalized.

\subsection{Sample example: the Obnosov problem}

\subsubsection{FFT-based implementation}

The Green's operator $\toq{\Gamma}_0$ considered is a volume integral operator defined over the unit cell $\mathcal{V}$. While all previous developments are independent of the boundary conditions provided that they are compatible with Lemma \ref{Hill:orth}, for periodic media it is convenient to express this operator using the Fourier transform $\TF$, see Appendix \ref{math2}, as
\begin{equation}\label{int:op}
\toq{\Gamma}_0\tod{\tau}(\tou{x})=\TF^{-1}\left[\hat{\toq{\Gamma}}_0(\tod{\xi})\dc\TF[\tod{\tau}](\tou{\xi})\right]\!(\tou{x})\qquad \forall \tou{x}\in\mathcal{V},
\end{equation}
with the symmetric fourth-order tensor $\hat{\toq{\Gamma}}_0(\tod{\xi})$ being defined in closed-form in the Fourier space by
\[
\hat{\toq{\Gamma}}_0(\tod{0})=\toq{0} \quad \text{and} \quad \hat{\toq{\Gamma}}_0(\tod{\xi})=\left[\tod{\xi}\otimes\left(\tod{\xi}\cdot\toq{L}_0\cdot\tod{\xi}\right)^{-1}\otimes\tod{\xi}\right]_{\operatorname{sym}}\quad \forall \tod{\xi}\in\mathcal{R}^*\backslash\{\tou{0}\}.
\]
Owing to the convolution theorem, Eqn. \eqref{int:op} is the Fourier transform of a convolution, hence $\toq{\Gamma}_0$ is a non-local integral operator in the real space. This is also the case for the stress Green's tensor $\toq{\Delta}_0$. Note that, according to Proposition \ref{gen:decomp}, the tensor $\toq{\Delta}_0$ can be fully expressed in terms of $\toq{\Gamma}_0$. The Fourier-based formulation \eqref{int:op} is at the foundations of the FFT-based computational homogenization methods for periodic media and it is used in the numerical examples of this section.

In a typical numerical implementation, the unit cell $\mathcal{V}$ is discretized using a regular grid of $N^d$ pixels or voxels, which is used to sample data and unknowns. The discrete Fourier transform is computed by means of the FFT algorithm, using all the discrete frequencies associated with the discretization, see \cite{Moulinec}. The gradient-based minimization schemes of Section \ref{sec:iter:schemes} are implemented and, in accordance with the general principle of the FFT methods, the algorithms make use of \eqref{int:op} in order to compute the action of the operator $\toq{\Gamma}_0$ locally in the Fourier space, while actions of operators such as $\toq{L}(\tou{x})$ are computed locally in the real space. At convergence of these iterative schemes the accuracy of the solution is governed by the discretization, i.e., the grid size $N$.

\subsubsection{Comparison between minimization schemes}

\begin{figure}[htb]	
\centering
\includegraphics[height=0.2\textheight]{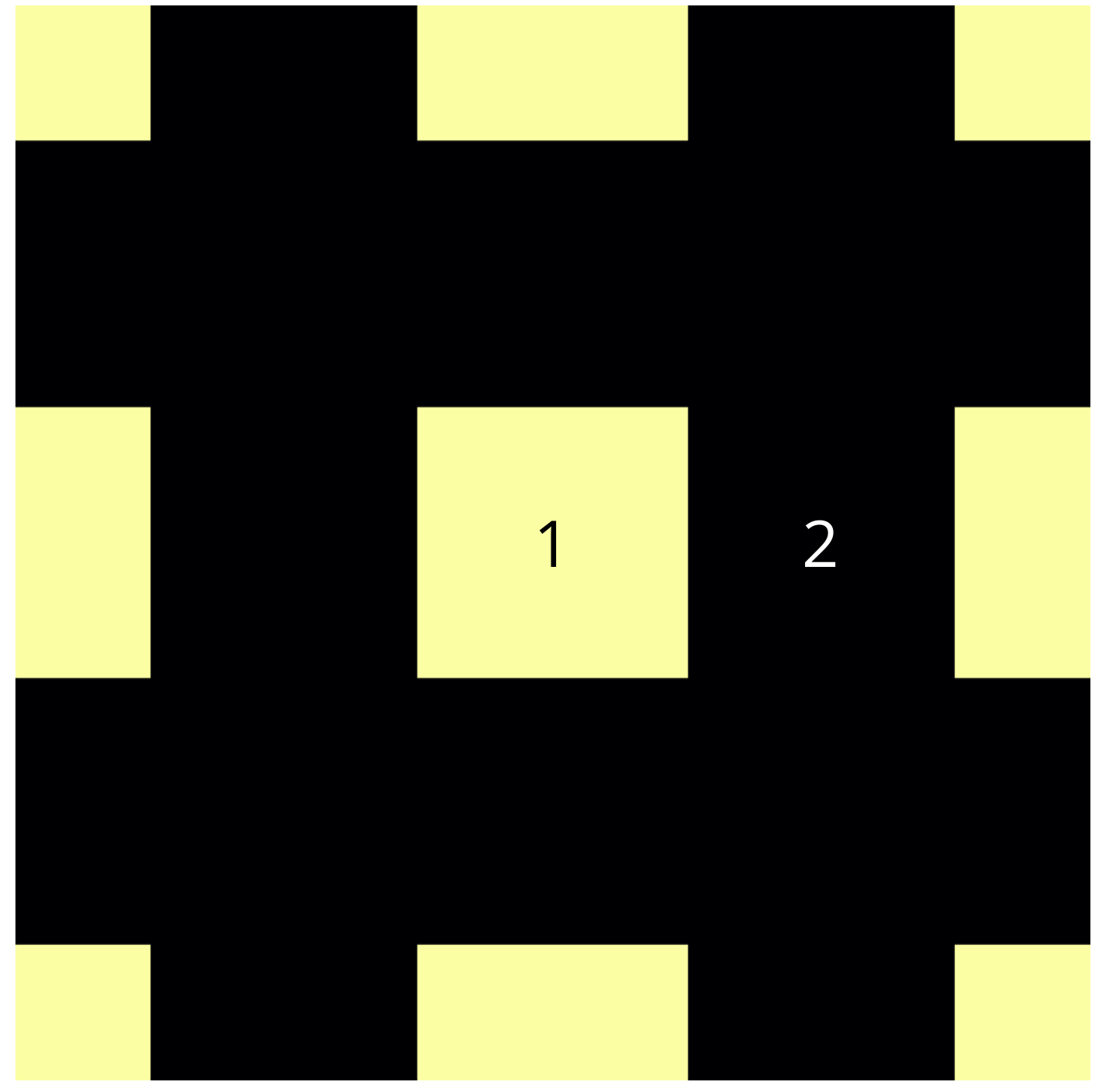}
\caption{Square inclusion in a square unit-cell with volume fraction $0.25$.}\label{Obnosov_config_per}
\end{figure}

In \cite{Obnosov} Y. Obnosov derived the effective conductivity of a square array of square inclusions with volume fraction $0.25$, see Figure \ref{Obnosov_config_per}. As the corresponding problem is equivalent to antiplane elasticity, let $\tod{L}(\tou{x})$ denote the second-order shear modulus matrix. Its effective value is isotropic according to:
\[
 \eff{\tod{L}}^{\operatorname{ex}}= \eff{L}^{\operatorname{ex}}\, \tod{I} \qquad \text{with} \qquad  \eff{L}^{\operatorname{ex}} =  {L}_{2} \sqrt{ \frac{1 + 3 z}{3  + z}}, \quad z = \frac{{L}_{1}}{{L}_{2}},
  \]
where the $L_{j}$ for $j=1,\,2$ are the isotropic shear moduli of the individual phases, the matrix being phase $2$. The contrast used in the subsequent simulations is $z=10^2$ with $L_2=1$ and the computational domain is an image of the unit-cell discretized into $512\!\times\!512$ pixels.

\begin{figure}[htb]	
\centering
\subfloat[Minimization of $\mathcal{J}$ in \eqref{cost:fct:J} and $\mathcal{N}$ in \eqref{cost:fct:N}.]{\includegraphics[height=0.26\textheight]{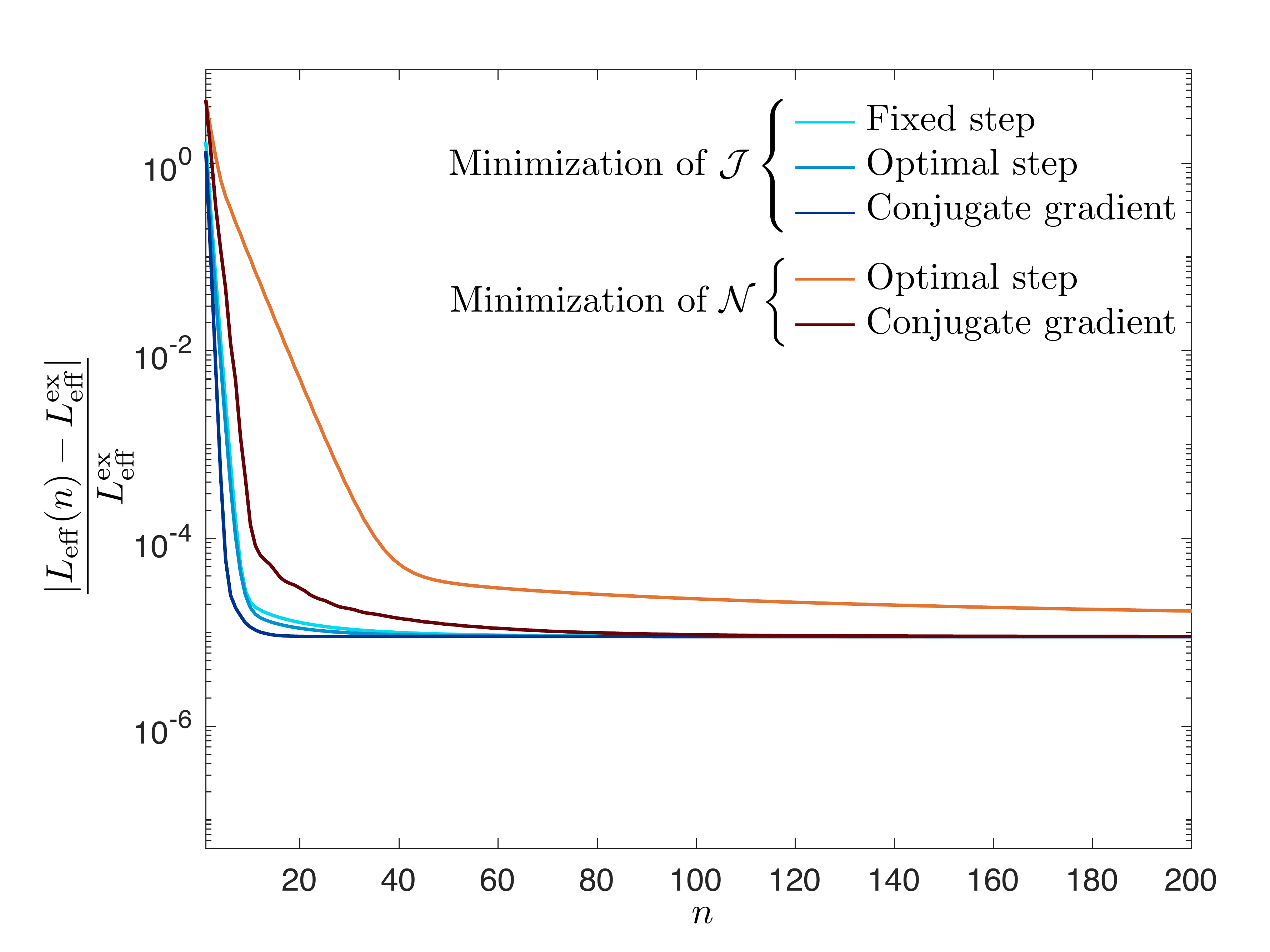}\label{Cvg_J_N_zeff}}
\subfloat[Minimization of $\mathcal{P}$ in \eqref{cost:fct:P}.]{\includegraphics[height=0.26\textheight]{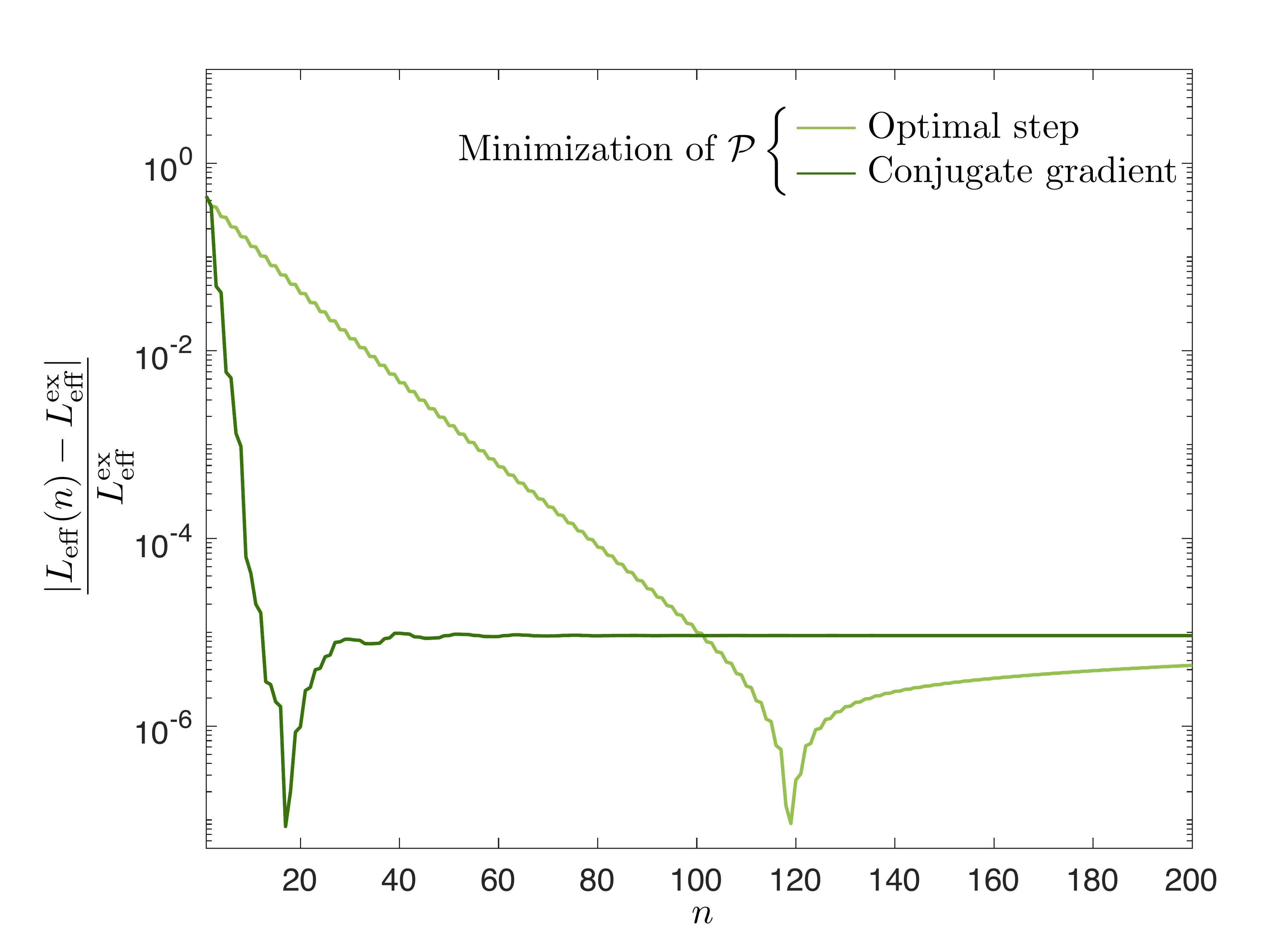}\label{Cvg_P_zeff}}
\caption{Comparison of gradient-based minimization schemes for the energetic and the geometric functionals considered: evolution of the relative error in effective property with respect to the number $n$ of iterations.}
\end{figure}

Computations were performed using the energetic cost functional \eqref{cost:fct:J} with the gradient method with fixed or optimal step and the conjugate-gradient method. Minimizations of the geometric functionals \eqref{cost:fct:N} and \eqref{cost:fct:P} are also implemented using the gradient method with optimal step and the conjugate-gradient method. For the strain-based schemes associated with $\mathcal{J}$ and $\mathcal{N}$, the corresponding reference medium was the optimal value derived in \cite{Michel} without information on the microstructure, i.e., $\tod{L}_0=L_0 \,\tod{I}$ with $L_{0}=\frac{z+1}{2}$. Moreover, they are initialized using $\tod{\chi}_0\equiv\tou{e}^*_0=\tod{0}$ with $\tod{\chi}_0$ being a first-order tensor in the antiplane elasticity problem considered. The effective property is computed at each iterate using $\eff{\toq{L}}\,\macr{\tod{\eps}}:\macr{\tod{\eps}}=\moy{\toq{L}(\tod{e}^*_n+\macr{\tod{\eps}}):(\tod{e}^*_n+\macr{\tod{\eps}})}$ according to \eqref{princip0}.

For the mixed schemes associated with $\mathcal{P}$ the reference medium was chosen as $\tod{L}_0=L_0 \,\tod{I}$ with $L_{0}=\sqrt{z}$. Alternative choices might be relevant, see Remark \ref{rmk:L0}, but we do not aim at investigating this issue further in the present study. The schemes are initialized using a pair of non-zero first-order tensors defined arbitrarily as $\tod{\chi}_0\equiv(\tou{\tau}_0,\tou{\eta}_0)=(\tou{1},\macr{\tou{\eps}})$. Note that, the functionals considered being convex, the gradient-based schemes are guaranteed to converge independently of the chosen starting point. Lastly, the effective modulus is computed according to \eqref{Leff:var:P}, i.e., using $\eff{\toq{L}}\,\macr{\tod{\eps}}:\macr{\tod{\eps}}=\moy{\tod{\tau}_n}\dc\moy{\tod{\eta}_n}$.

For the configuration investigated here, the conjugate-gradient is the fastest method. Moreover, for the minimization of the energetic function $\mathcal{J}$ very few performance differences are found between the gradient-based schemes and the so-called basic scheme of \cite{Moulinec}. In terms of computation of the effective property, the minimization of the geometric functionals $\mathcal{N}$ using the conjugate-gradient method yields comparable performances while the optimal-step implementation is associated with a slower convergence rate, see Figure \ref{Cvg_J_N_zeff}. Similar conclusions are found for the minimization of the geometric functional $\mathcal{P}$ as shown Fig. \ref{Cvg_P_zeff}. All methods compute the effective property with the same precision with a relative error of about $9\cdot10^{-6}$ measured at convergence. The specific values of the error reached at convergence for such algorithms is specific to (i) the physical configuration considered, (ii) the method implementation and (iii) the discretization used, see the discussion in \cite{Moulinec18}.

\begin{figure}[ht]	
\centering
\subfloat[Minimization of the geometric functional $\mathcal{N}$.]{\includegraphics[height=0.26\textheight]{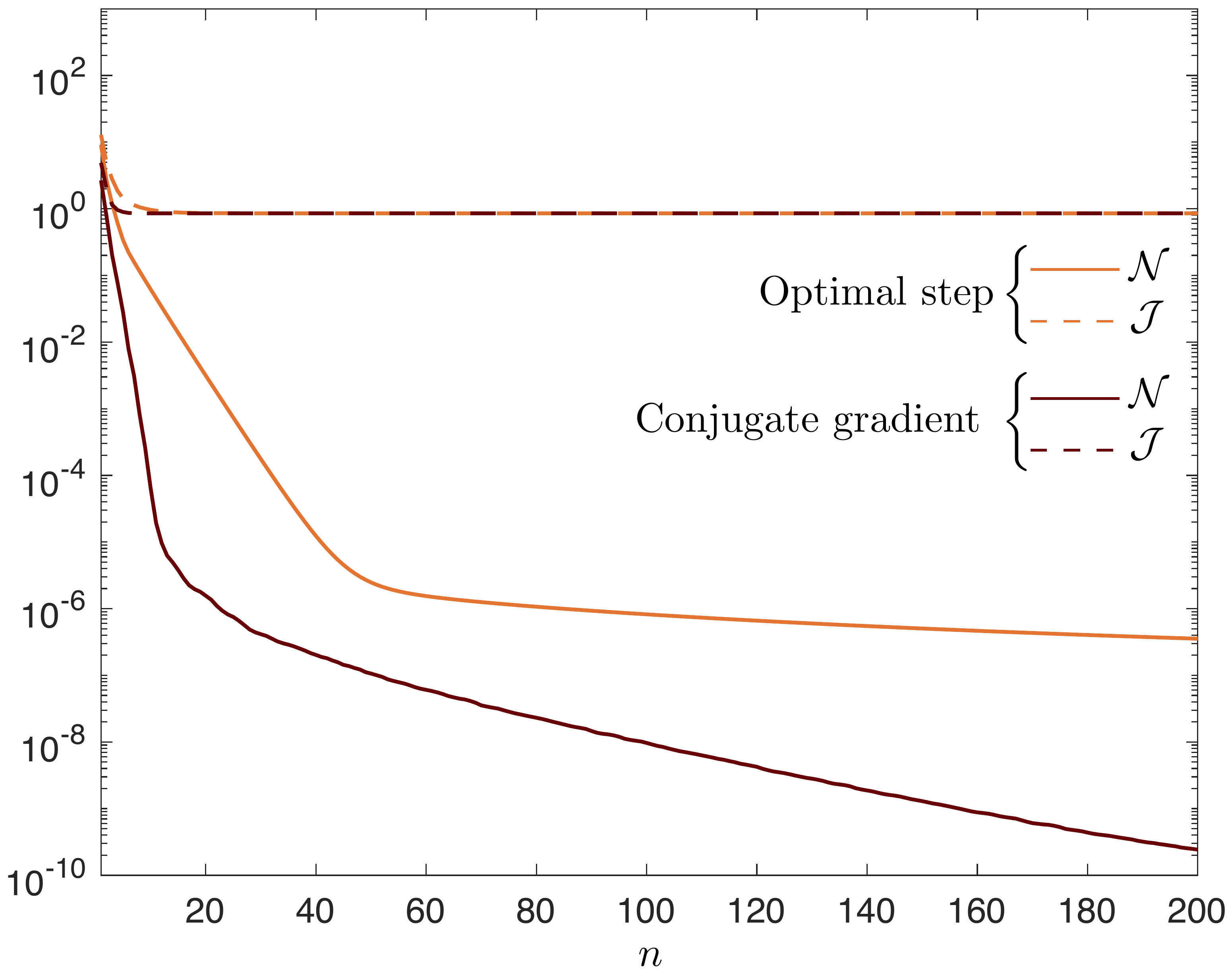}\label{Cvg_J_N}}
\subfloat[Minimization of the geometric functional $\mathcal{P}$.]{\includegraphics[height=0.26\textheight]{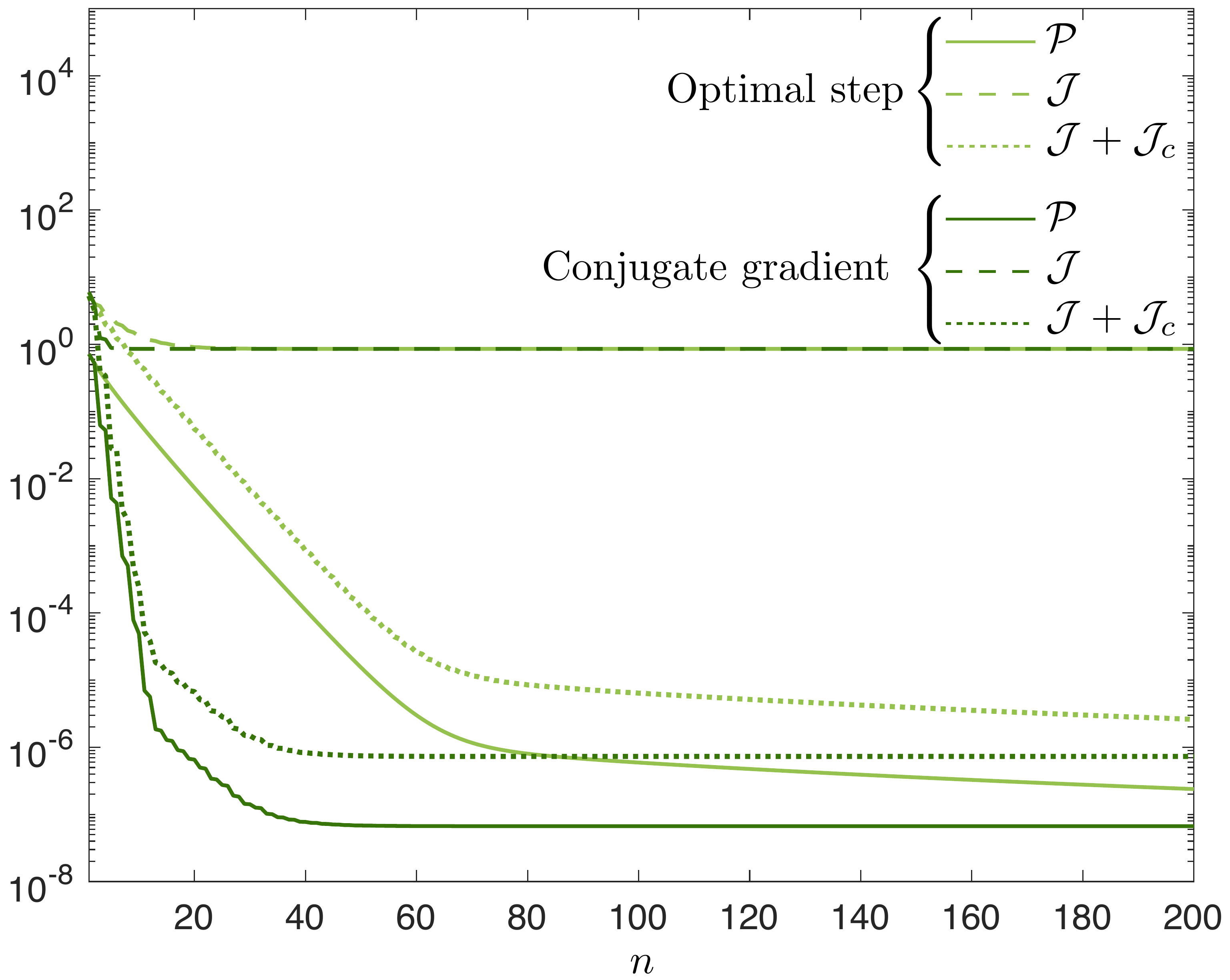}\label{Cvg_P}}
\caption{Comparison of gradient-based minimization schemes for the geometric functionals $\mathcal{N}$ and $\mathcal{P}$: evolution of the functionals with respect to the number $n$ of iterations (solid curves). For each scheme, the mechanical energy, quantified by the functionals $\mathcal{J}$ and $\mathcal{J}_c$, of the \emph{admissible components} at each iterate is plotted alongside (dashed and dotted curves).} \label{Cvg:geo:fct}
\end{figure}

For the proposed geometric variational principles, Figure \ref{Cvg:geo:fct} highlights that the functionals $\mathcal{N}$ and $\mathcal{P}$ are minimized monotonically as expected. Moreover, the mechanical energy, as quantified by the energetic functionals $\mathcal{J}$ and $\mathcal{J}_c$, for the \emph{admissible components} of the fields at each iterate decrease in accordance with the propositions \ref{prop:equiv:J:N} and \ref{prop:equiv:P:JJc} with $\mathcal{J}$ converging to an $\mathcal{O}(1)$ value that is equal to the effective energy, while $(\mathcal{J}+\mathcal{J}_c)$ converges to zero. In Fig. \ref{Cvg_J_N} and \ref{Cvg_P} it can also be seen that these energy terms are minimized monotonically.

As a conclusion of this section, the comparison of the proposed approach with earlier methods, i.e., the minimization of the geometric functionals compared to the energetic ones, is not characterized by an improvements of the numerical performances, in terms of, e.g., accuracy or speed of convergence. Rather, what should be retained from this comparison is that the proposed method is conceptually different in that it treats the three equations of the local problem on an equal footing. In doing so, its advantage lies in the fact that it make it easy to deal with situations where the constitutive model is partially or fully unknown, see \cite{Bonnet,Kirchdoerfer,Nouy,Staber}.

\subsubsection{Visualization of cost functional minimizations}

\begin{figure}[th]	
\centering
\subfloat[Gradient method with optimal step.]{\includegraphics[width=0.45\textwidth]{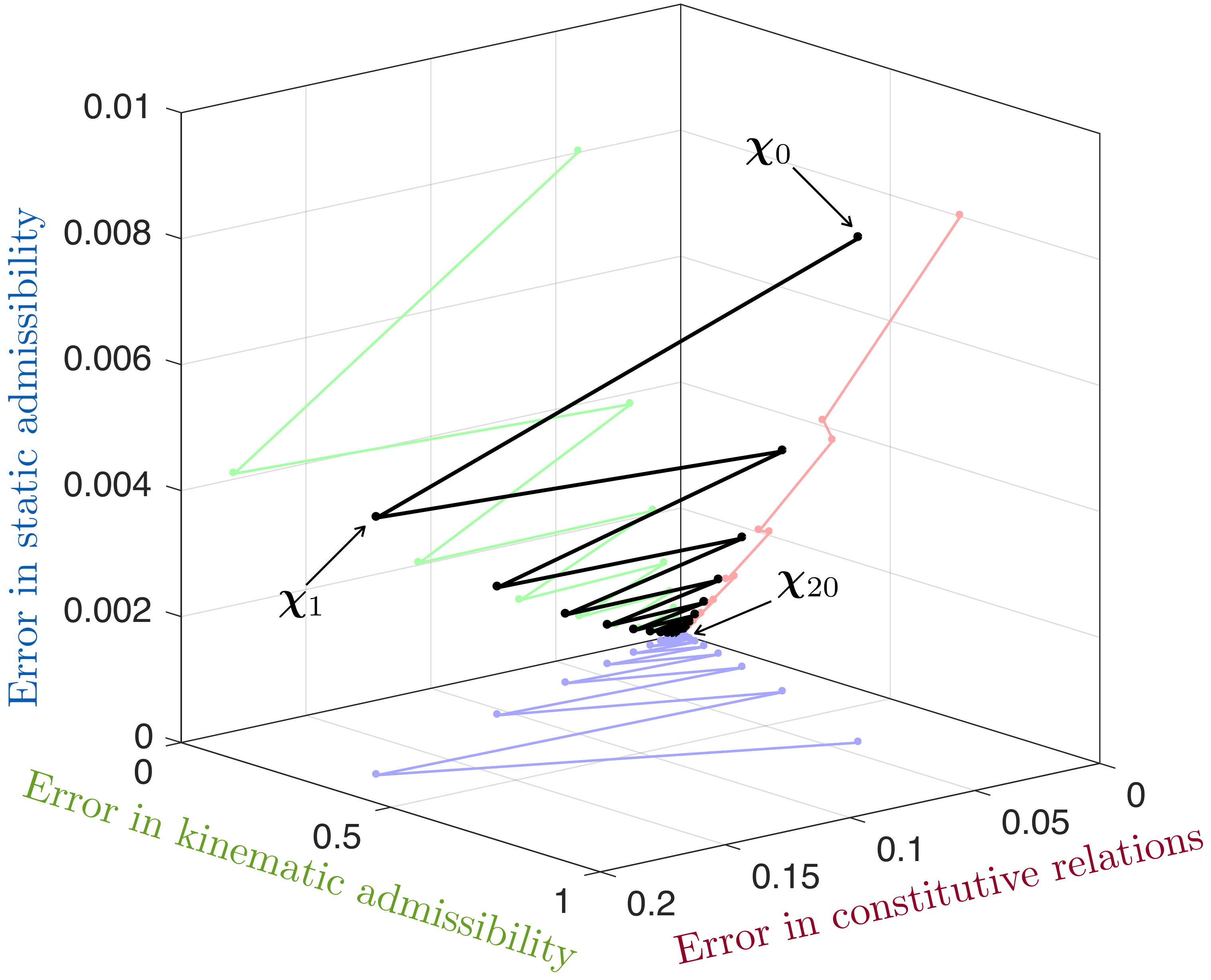}\label{minP_visu_opt}}\hspace{4mm}
\subfloat[Conjugate-gradient.]{\includegraphics[width=0.45\textwidth]{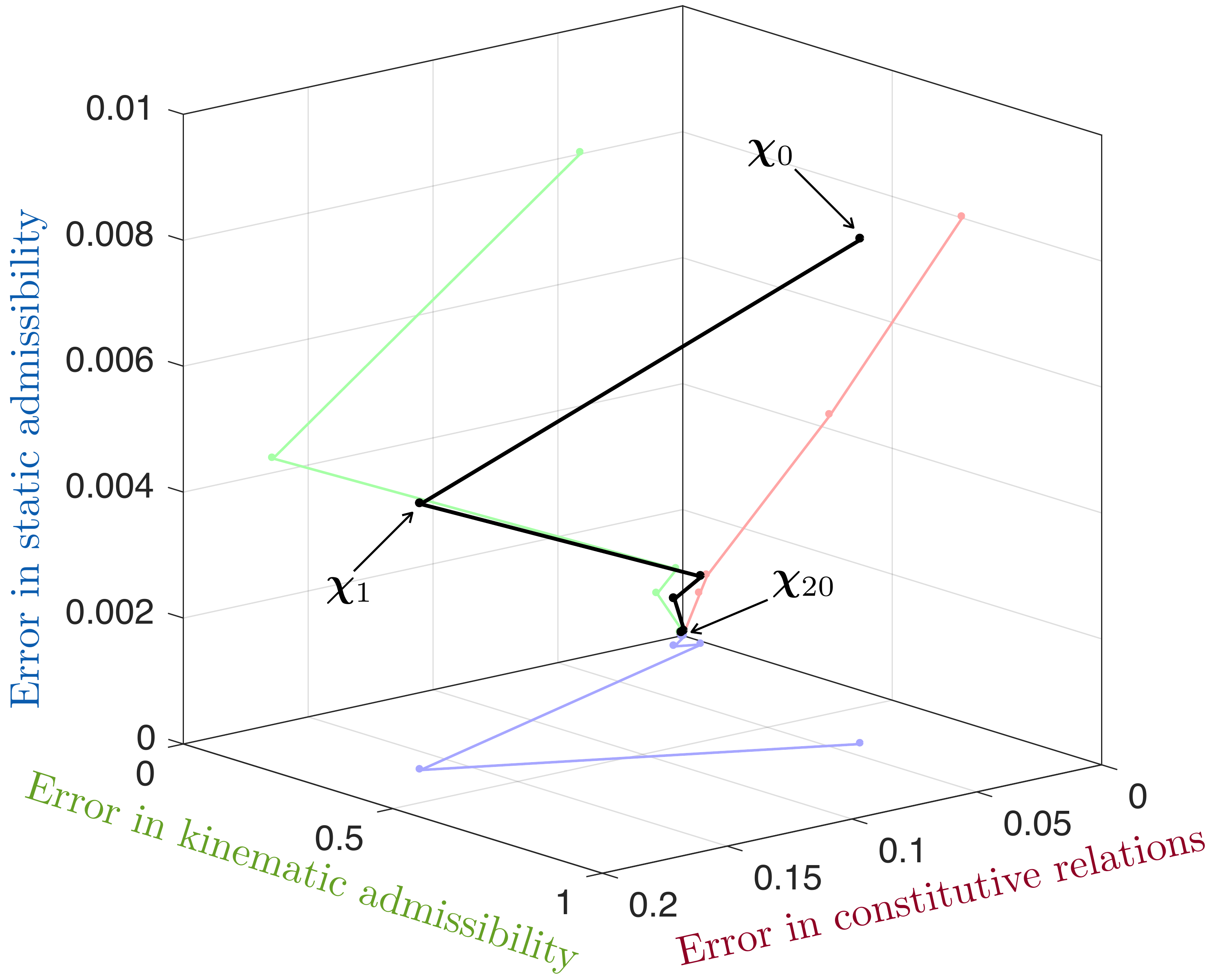}\label{minP_visu_CG}}
\caption{Visualization of the iterated solution in a projection space indicating the deviation from the validity of each of the equations in \eqref{local:fct:space}: gradient-based implementations of the proposed two-field geometric variational principle (\ref{cost:fct:P}--\ref{var:princip:P}) based on the functional $\mathcal{P}$.} \label{rep:solutions2}
\end{figure}

For the Obnosov problem described previously, Figure \ref{rep:solutions2} represents the solutions computed at each iterate for the gradient-based implementations of the proposed geometric variational principle (\ref{cost:fct:P}--\ref{var:princip:P}) using a projection space quantifying the deviation from the validity of each of the equations of the problem \eqref{local:fct:space}, see Fig. \ref{Fig:Schematics}. More precisely, for each iterate the solution is represented as a point with the following coordinates:
\[
\tou{\chi}_n\equiv(\tod{\tau}_n,\tod{\eta}_n) \rightarrow \big(x_n,y_n,z_n\big)=\big( \Updelta{\textrm{Const}}(\tod{\tau}_n,\tod{\eta}_n) , \Updelta{\textrm{Compat}}(\tod{\eta}_n) , \Updelta{\textrm{Equil}}(\tod{\tau}_n) \big).
\]
with the functionals ``$\Updelta{\textrm{Compat}}$'', ``$\Updelta{\textrm{Const}}$'' and ``$\Updelta{\textrm{Equil}}$'' being defined by \eqref{def:err:fct}.

\begin{figure}[htb]	
\centering
\subfloat[Basic-scheme of \cite{Moulinec}.]{\includegraphics[width=0.33\textwidth]{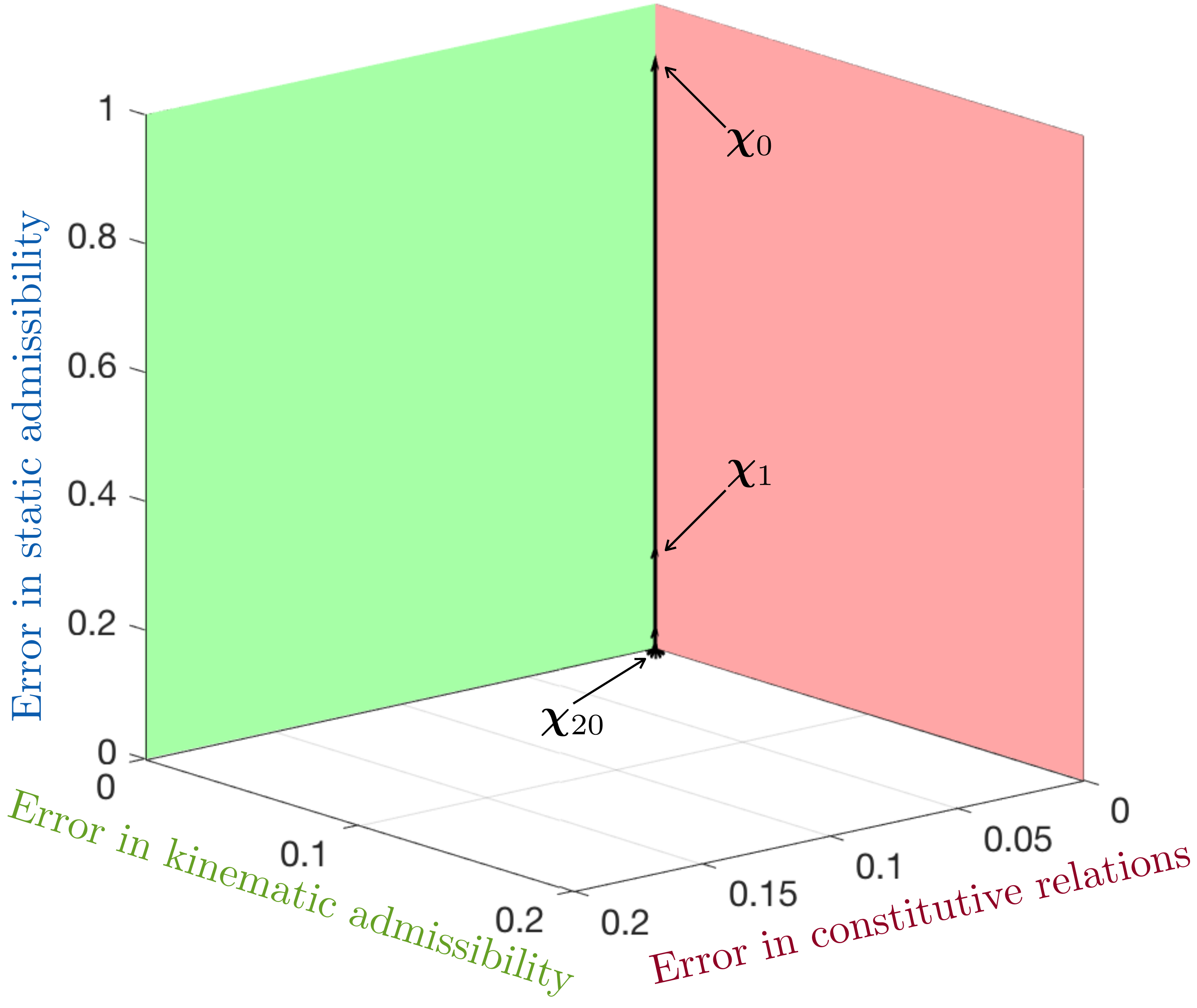}\label{minP_visu_MS}}
\subfloat[Eyre-Milton scheme in \cite{Eyre}.]{\includegraphics[width=0.33\textwidth]{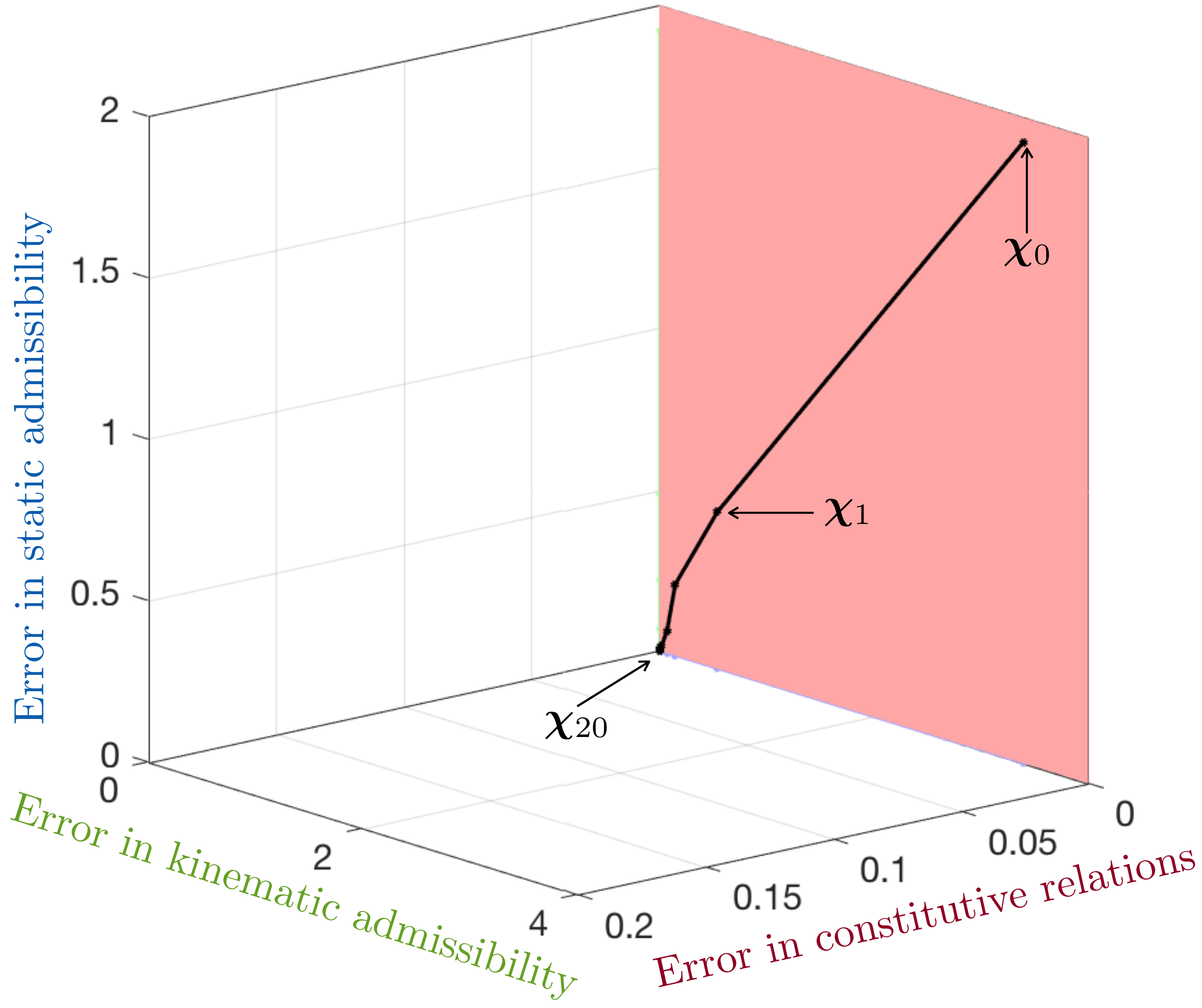}\label{minP_visu_EM}}
\subfloat[Augmented-Lagrangian scheme \cite{Michel}.]{\includegraphics[width=0.33\textwidth]{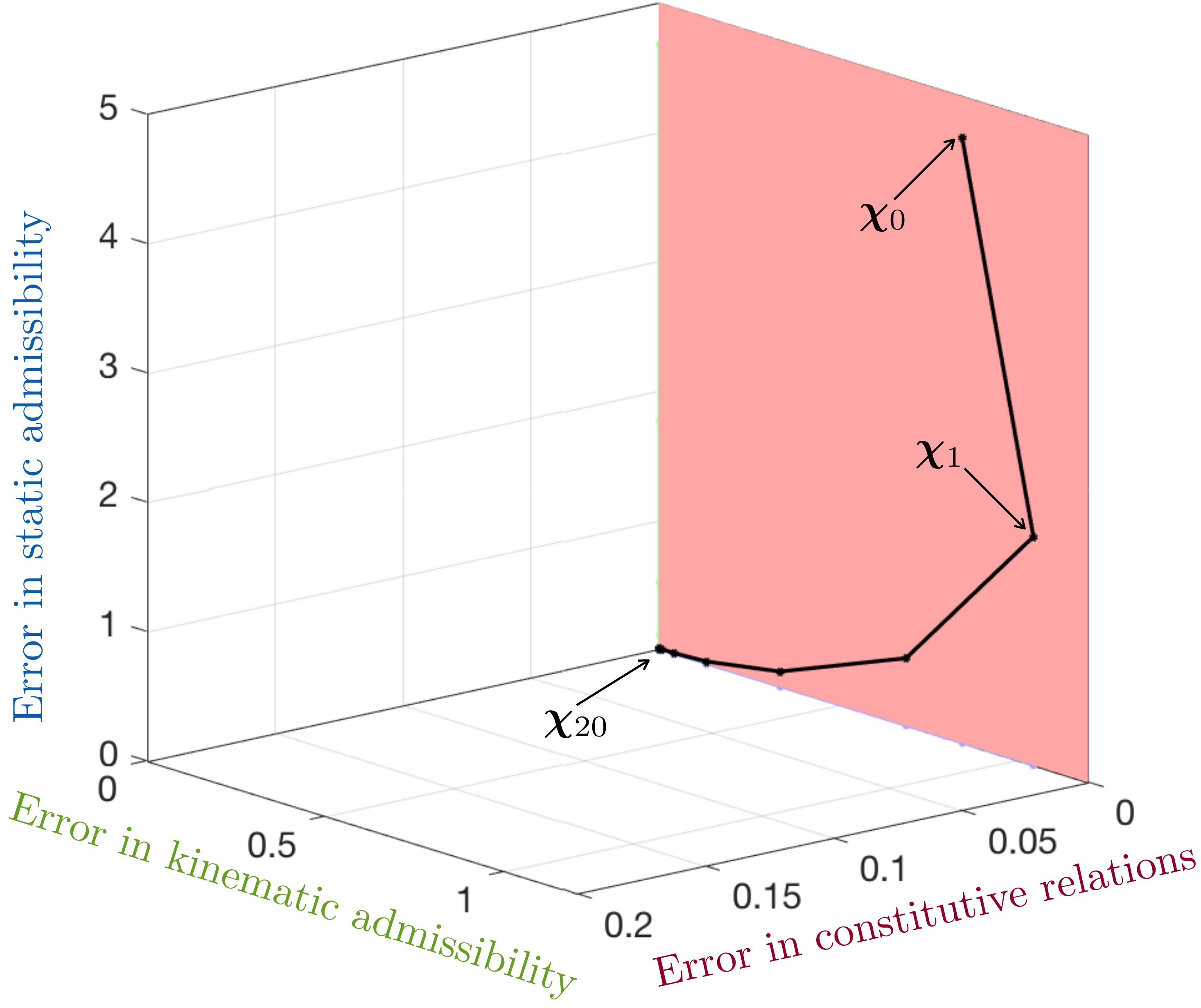}\label{minP_visu_AL}}
\caption{Visualization of the iterated solution for classical schemes in a projection space indicating the deviation from the validity of each of the equations in \eqref{local:fct:space}. The green and red planes are placed at the origins of the corresponding axes and displayed when the iterated solution evolves within them.} \label{rep:solutions}
\end{figure}

For comparison, three classical methods are also implemented and their corresponding solutions represented in the same projection space in Figure \ref{rep:solutions}. The iterates of the so-called basic scheme of \cite{Moulinec} are shown in Fig. \ref{minP_visu_MS} using the coordinates
\[
\tou{\chi}_n\equiv \tod{e}^*_n \rightarrow \big(x_n,y_n,z_n\big)=\big( 0 , 0 , \Updelta{\textrm{Equil}}(\toq{L}(\tod{e}^*_n+\macr{\tod{\eps}})) \big)
\]
since, by definition, this scheme satisfies by construction both the kinematic admissibility and the constitutive relations. The static admissibility is met only at convergence. The Eyre-Milton scheme of \cite{Eyre} and the augmented-Lagrangian method of \cite{Michel} are also implemented using the common single-field formalism of \cite{Silva} and their iterates are represented using the coordinates
\[
\tou{\chi}_n\equiv \tod{e}_n \rightarrow \big(x_n,y_n,z_n\big)=\big( 0 , \Updelta{\textrm{Compat}}(\tod{e}_n) , \Updelta{\textrm{Equil}}(\toq{L}\tod{e}_n )\big)
\]
as they only enforce the constitutive relations by construction. Kinematic and static admissibilities are satisfied at convergence. The figures \ref{rep:solutions2} and \ref{rep:solutions} allow to visualize the behavior of the iterated solution for each algorithm and to compare their convergence performances. For example, Figure \ref{rep:solutions2} sheds a new light on the behavior of the proposed algorithms compared to the only information provided by the computation of the effective property, see Fig. \ref{Cvg_P_zeff}.


\section{Conclusions}

Considering an arbitrary periodic composite subjected to an applied macroscopic strain, the associated local mechanical problem is recast in a geometric formalism that relies on the definition of spaces of kinematically and statically admissible tensor fields. These are Hilbert spaces associated with two different energetic scalar products that are formulated using a reference and uniform elasticity tensor. The corresponding strain and stress Green's operators are considered and their geometric properties are investigated. In particular, these operators are shown to generate orthogonal decompositions of second-order tensors fields. In this context, two geometric variational principles are proposed to compute the solution to the mechanical problem considered, as well as the effective properties of the composite. First, a strain-based variational principle is proposed through the introduction of a functional that measures the lack of static admissibility of an associated stress test field. The gradient of this functional is computed and the connection with the classical energetic variational principle is discussed. Building from this preliminary idea, a two-field variational principle is proposed. Its aim is to relax all of the equations of the local mechanical problem so as to reach the solution through an unconstrained minimization process. Doing so, one of our objectives is to enable the treatment of problems where the constitutive relations are partially unknown or uncertain. The functional partial gradients are computed and some connections with minimum energy principles are discussed. With these geometric functionals at hand, their minimization is addressed using gradient-based iterative minimization schemes. A gradient descent scheme with optimal step and the conjugate-gradient method are both implemented and their performances are illustrated on the prototypical Obnosov problem for which analytical solutions are available. Using a FFT-based implementation, the proposed schemes are confronted to the standard approaches that revolve around the classical minimum energy principles. Lastly, the geometric setting considered allows to visualize the evolutions of the iterated solutions in a 3D system of coordinates corresponding to the kinematic, static and material admissibility conditions, which is illustrated for a number of schemes.

This study lays the groundwork for the homogenization of composites whose constitutive properties are partially unknown and this will be the subject of future works. Moreover, there are open questions regarding the connections between the energetic and the geometric variational principles, in particular to assess whether and under which conditions the latter yield some minimization principles for the former. Lastly, in the context considered, let us mention as a perspective that there exist other algorithms pertaining to convex optimization, such as the Alternating Direction Method of Multipliers, which can be used to improve numerical performances.

\paragraph{Acknowledgements:}
Fruitful discussions with H. Moulinec and J.-C. Michel are gratefully acknowledged. The Authors have received funding from Excellence Initiative of Aix-Marseille University - A*MIDEX, a French ``Investissements d'Avenir'' program in the framework of the Labex MEC.

\appendix

\section{Mathematical definitions}

\subsection{Periodic fields}\label{math1}

Consider a unit-cell $\mathcal{V}$ allowing to fill the space $\mathbb{R}^d$ by translation along $d$ vectors $\tou{Y}_1,\dots, \tou{Y}_d$. The lattice $\mathcal{R}$ generated by these vectors is defined as
\[
 \mathcal{R} = \left\{ \tou{Y},\;  \tou{Y} = \sum_{j=1}^d n_j \tou{Y}_j,\; n_j \in \mathbb{Z} \right\}.
 \]
Define spaces of periodic scalar functions, vector fields and tensor fields as:
\begin{eqnarray*}
& & L^2_\text{per}\!\left(\mathcal{V} \right)= \left\{ {f} \in L^2_\text{loc}(\mathbb{R}^d),\; f(\tou{x}+ \tou{Y}) =  f(\tou{x}), \; \text{a.e.} \ \tou{x} \in \mathcal{V}, \; \forall \tou{Y} \in \mathcal{R} \right\}, \\
& & H^1_\text{per}\!\left(\mathcal{V}\right)= \left\{ {f} \in H^1_\text{loc}(\mathbb{R}^d),\; f \in L^2_\text{per}\!\left(\mathcal{V}\right), \; \partial_{x_j} f \in L^2_\text{per}\!\left(\mathcal{V}\right), \; 1\leq j \leq d, \right\},\\ 
& & \tou{L}^2_\text{per}\!\left(\mathcal{V}\right)= \left\{ \tou{f}=({f}_j)\vert_{1\leq j \leq  d},\ f_j \ \in L^2_\text{per}\!\left(\mathcal{V}\right)\right\}, \\[1mm]
& & \tou{H}^1_\text{per}\!\left(\mathcal{V}\right)= \left\{ \tou{f}=({f}_j)\vert_{1\leq j \leq  d},\ f_j \ \in H^1_\text{per}\!\left(\mathcal{V}\right) \right\},\\[1mm]
& & \mathscrbf{L}^2_\text{per}\!\left(\mathcal{V}\right)= \left\{ \tod{F}=({F}_{jk})\vert_{ 1 \leq j,k \leq d},\ F_{jk}= F_{kj},\, F_{jk} \in L^2_\text{per}(\mathbb{R}^d)\right\}, 
\end{eqnarray*}

\subsection{Fourier transforms}\label{math2}

The Fourier transform $\hat{f}$ of $f$ is defined as: 
\[
\hat{f}(\tou{\xi}) = \TF[f](\tou{\xi}) =  \frac{1}{|\mathcal{V}|}  \int_ \mathcal{V} f( \tou{x} ) e^{-\mathrm{i} \tou{\xi} \cdot \tou{x}} \dd\tou{x},\quad \text{where}\quad   \mathrm{i} = \sqrt{-1}.
\]
Let $\mathcal{R}^*$ denote the reciprocal lattice of  $\mathcal{R}$ generated by the vectors
\[
 \tou{Y}^*_i = \frac{2 \pi}{|\mathcal{V}|} \tou{Y}_j \wedge \tou{Y}_k,
 \]
where $(i,j,k)$ is a direct circular permutation. Then, according to Plancherel's theorem:
\[
\frac{1}{|\mathcal{V}|} \int_\mathcal{V} \big| f(\tou{x})\big|^2 \ d \tou{x} =  \sum_{\tou{\xi} \in \mathcal{R}^*} \big|\hat{f}(\tou{\xi})\big|^2,
\]
and therefore
\[
f \in  L^2_\text{per}\!\left(\mathcal{V} \right) \quad \Leftrightarrow \quad  \sum_{\tou{\xi} \in \mathcal{R}^*} \big|\hat{f}(\tou{\xi})\big|^2 < + \infty.
\]
The original periodic function $f $ in $L^2_\text{per}\!\left(\mathcal{V} \right)$  can be reconstructed from its Fourier transform by
\[
 f(\tou{x}) = \TF^{-1}[\hat{f}](\tou{x}) = \sum_{\tou{\xi} \in \mathcal{R}^*} \hat{f}(\tou{\xi}) e^{\mathrm{i} \tou{\xi} \cdot\tou{x}}.
 \]
 
 \section{Properties of the Green's operators}\label{app:Green}
 
 \begin{lemma}\label{lemma:Green} The strain Green's operator $\toq{\Gamma}_0$ satisfies the following properties:
\begin{enumerate}
\item
$\toq{\Gamma}_0$ satisfies the reciprocity identity:
\begin{equation}
\pdse{\tod{\tau}}{\toq{\Gamma}_0 \tilde{\tod{\tau}}} = \pdse{ \tilde{\tod{\tau}}}{\toq{\Gamma}_0 \tod{\tau}}, \qquad \forall \tod{\tau}, \tilde{\tod{\tau}} \in \mathscrbf{H}_s.
\label{prop1}
\end{equation}
\item 
The kernel of $\toq{\Gamma}_0$ coincides with the subspace $\mathscrbf{S}$:
\begin{equation}
\toq{\Gamma}_0 \tod{s} = \tod{0} \quad \Leftrightarrow  \quad \tod{s}  \in \mathscrbf{S}. 
\label{prop3}
\end{equation}
\item 
$\toq{\Gamma}_0$ is such that:
\begin{equation}
\toq{\Gamma}_0 \toq{L}_0 \toq{\Gamma}_0  = \toq{\Gamma}_0.  
\label{prop3bis}
\end{equation}
\end{enumerate}
\end{lemma}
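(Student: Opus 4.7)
The plan is to derive all three items as direct consequences of Definition~\ref{def:Green} combined with the Hill-type orthogonality of Lemma~\ref{Hill:orth} and the major symmetry of $\toq{L}_0$. The unifying trick is to rewrite any argument of $\toq{\Gamma}_0$ as $\tod{\tau} = \toq{L}_0 \tod{e}^* - \tod{s}$ with $\tod{e}^* = \toq{\Gamma}_0 \tod{\tau} \in \mathscrbf{E}_0$ and $\tod{s} = \toq{L}_0 \tod{e}^* - \tod{\tau} \in \mathscrbf{S}$, and to exploit the fact that duality products between elements of $\mathscrbf{S}$ and of $\mathscrbf{E}_0$ vanish.

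For item~1, I would introduce $\tod{e}^* = \toq{\Gamma}_0 \tod{\tau}$ and $\tilde{\tod{e}}^* = \toq{\Gamma}_0 \tilde{\tod{\tau}}$ together with the associated admissible stresses $\tod{s}, \tilde{\tod{s}} \in \mathscrbf{S}$ from the Eshelby problem (\ref{thermoelas}). Substituting $\tod{\tau} = \toq{L}_0 \tod{e}^* - \tod{s}$ in the left-hand side gives
\[
\pdse{\tod{\tau}}{\toq{\Gamma}_0 \tilde{\tod{\tau}}} = \pdse{\toq{L}_0 \tod{e}^*}{\tilde{\tod{e}}^*} - \pdse{\tod{s}}{\tilde{\tod{e}}^*},
\]
and by Lemma~\ref{Hill:orth} the second term vanishes since $\tod{s}\in\mathscrbf{S}$ and $\tilde{\tod{e}}^*\in\mathscrbf{E}_0$. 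The remaining quantity $\pdse{\toq{L}_0 \tod{e}^*}{\tilde{\tod{e}}^*}$ is symmetric in $(\tod{e}^*, \tilde{\tod{e}}^*)$ by the major symmetry of $\toq{L}_0$, and performing the symmetric manipulation starting from $\tilde{\tod{\tau}}$ produces the same value, which proves \eqref{prop1}.

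For item~2, the implication $\tod{s} \in \mathscrbf{S} \Rightarrow \toq{\Gamma}_0 \tod{s}=\tod{0}$ follows by observing that the trivial candidate $\tod{e}^* = \tod{0}\in\mathscrbf{E}_0$ yields $\toq{L}_0\cdot\tod{0} - \tod{s} = -\tod{s}\in\mathscrbf{S}$, so uniqueness of the Eshelby solution forces $\toq{\Gamma}_0\tod{s}=\tod{0}$; the converse is immediate by rewriting Definition~\ref{def:Green} with $\tod{e}^*=\tod{0}$. For item~3, for any $\tod{\tau}\in\mathscrbf{H}_s$ set $\tod{e}^* = \toq{\Gamma}_0 \tod{\tau}\in\mathscrbf{E}_0$. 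I want to compute $\toq{\Gamma}_0\toq{L}_0\tod{e}^*$, i.e., the unique $\tod{f}^*\in\mathscrbf{E}_0$ with $\toq{L}_0\tod{f}^* - \toq{L}_0\tod{e}^* \in\mathscrbf{S}$. Clearly $\tod{f}^*=\tod{e}^*$ works, giving $\tod{0}\in\mathscrbf{S}$, so by uniqueness $\toq{\Gamma}_0\toq{L}_0\tod{e}^* = \tod{e}^*$ and \eqref{prop3bis} follows upon applying $\toq{\Gamma}_0\toq{L}_0$ to $\tod{\tau}$.

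The main (and really only) obstacle behind all three items is the uniqueness of the solution to the Eshelby problem (\ref{thermoelas}), which the authors assert without proof just after stating it. I would justify it by taking two candidates $\tod{e}_1^*, \tod{e}_2^*\in\mathscrbf{E}_0$ satisfying the problem for the same $\tod{\tau}$, so that their difference $\tod{\delta} = \tod{e}_1^* - \tod{e}_2^*$ lies in $\mathscrbf{E}_0$ while $\toq{L}_0\tod{\delta}\in\mathscrbf{S}$ (both $\mathscrbf{E}_0$ and $\mathscrbf{S}$ being linear subspaces); testing via the duality product and invoking Lemma~\ref{Hill:orth} gives $\pdse{\toq{L}_0\tod{\delta}}{\tod{\delta}}=0$, and the positive definiteness of $\toq{L}_0$ forces $\tod{\delta}=\tod{0}$. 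Existence itself is taken for granted here, as it is a classical consequence of the Lax--Milgram lemma applied to the variational form of (\ref{thermoelas}) on the closed subspace $\mathscrbf{E}_0$.
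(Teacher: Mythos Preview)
Your proof is correct and, for item~1, follows essentially the same line as the paper: you use the decomposition $\tod{\tau}=\toq{L}_0\tod{e}^*-\tod{s}$ with $\tod{s}\in\mathscrbf{S}$ and invoke Lemma~\ref{Hill:orth} to reduce the duality product to the symmetric bilinear form $\pdse{\toq{L}_0\tod{e}^*}{\tilde{\tod{e}}^*}$, which the paper writes as $\pse{\toq{\Gamma}_0\tod{\tau}}{\toq{\Gamma}_0\tilde{\tod{\tau}}}$.

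For items~2 and~3 the paper simply cites \cite{Michel} without reproducing the arguments, whereas you supply short self-contained proofs based on the uniqueness of the Eshelby problem~(\ref{thermoelas}). These are exactly the standard arguments and are correct; your additional justification of uniqueness via $\pdse{\toq{L}_0\tod{\delta}}{\tod{\delta}}=0$ is also sound. So your route is not genuinely different from the paper's---it is the same idea, just made explicit where the paper defers to the literature.
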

Properties \eqref{prop3} and \eqref{prop3bis} were proved in \cite{Michel}. The additional property \eqref{prop1} derives from the identity
\begin{equation}
\pdse{\tod{\tau}}{\toq{\Gamma}_0 \tilde{\tod{\tau}}} = \pse{\toq{\Gamma}_0 \tod{\tau}}{\toq{\Gamma}_0 \tilde{\tod{\tau}}}.
\label{self1ter}
\end{equation}
To prove \eqref{self1ter}, note that, by definition of $\toq{\Gamma}_0$, a stress field $\tod{s}=\toq{L}_0 \tod{\Gamma}_0\tod{\tau}-\tod{\tau}$ in $\mathscrbf{S}$ can be associated with $\tod{\tau}$ through \eqref{Gamma0}. By Lemma \ref{Hill:orth}:
\[
0 = \pdse{\tod{s}}{\tod{\Gamma}_0\tilde{\tod{\tau}}} = \pdse{\toq{L}_0 \tod{\Gamma}_0\tod{\tau}}{\tod{\Gamma}_0\tilde{\tod{\tau}}} - \pdse{\tod{\tau}}{\tod{\Gamma}_0\tilde{\tod{\tau}}},
\]
which, according to the definition \eqref{Riesz} of the Riesz mapping, proves \eqref{self1ter}. Note that similar properties can be proved for the stress Green's operator $\toq{\Delta}_0$ owing to the duality \emph{principle}, see \cite{Milton}.

\begin{remark}
If one defines the Green's operator from $\mathscrbf{L}^2_\text{per}\!\left(\mathcal{V}\right)$, endowed with the standard $L^2$-scalar product, into itself then the reciprocity identity \eqref{prop1} amounts in the self-adjointness of $\tod{\Gamma}_0$, a property which is known since \cite{Kroner}.  
\end{remark}

\end{document}